\documentclass[a4paper]{amsart}
\usepackage{amsfonts,amsmath,amssymb,amscd,amstext,amsbsy,latexsym,url}
\vfuzz2pt 
\hfuzz2pt 
\newtheorem{thm}{Theorem}[section]
\newtheorem{cor}[thm]{Corollary}
\newtheorem{lem}[thm]{Lemma}
\newtheorem{prop}[thm]{Proposition}
\theoremstyle{definition}

\theoremstyle{remark}
\newtheorem{rem}[thm]{Remark}
\newtheorem{exm}[thm]{Example}
\numberwithin{equation}{section}


\setlength{\topmargin}{0cm}\setlength{\oddsidemargin}{0cm}\setlength{\evensidemargin}{0cm}

\setlength{\headheight}{.25cm}\setlength{\headsep}{.25cm}

\setlength{\textwidth}{16cm}\setlength{\textheight}{24.2cm}

\setlength{\parindent}{0cm}\setlength{\parskip}{0cm}

\begin{document}

\title{Classical Ideal Theory of Maximal subrings in Non-commutative Rings}%
\author{Alborz Azarang}%
\keywords{Maximal subrings, socle, singular ideal, Jacobson radical, upper nilradical, lower nilradical, Artinian rings}%
\subjclass[2020]{16S70; 16N20; 16N40; 16N60; 16P20; 16P40}%

\maketitle

\centerline{Department of Mathematics, Faculty of Mathematical Sciences and Computer,}
\centerline{Shahid Chamran University of Ahvaz, Ahvaz-Iran}
\centerline{a${}_{-}$azarang@scu.ac.ir}
\centerline{ORCID ID: orcid.org/0000-0001-9598-2411}
\begin{abstract}
In this paper we study relation between some important ideals in the ring extension $R\subseteq T$, where $R$ is a certain maximal subring of a ring $T$. In fact, we would like to find some relation between $Nil_*(R)$ and $Nil_*(T)$, $Nil^*(R)$ and $Nil^*(T)$, $J(R)$ and $J(T)$, $Soc({}_RR)$ and $Soc({}_RT)$, and finally $Z({}_RR)$ and $Z({}_RT)$; especially, in certain cases, for example when $T$ is a reduced ring, $R$ (or $T$) is a left Artinian ring, or $R$ is a certain maximal subring of $T$. We show that either $Soc({}_RR)=Soc({}_RT)$ or $(R:T)_r$ (the greatest right ideal of $T$ which is contained in $R$) is a left primitive ideal of $R$. We prove that if $T$ is a reduced ring, then either $Z({}_RT)=0$ or $Z({}_RT)$ is a minimal ideal of $T$, $T=R\oplus Z({}_RT)$, and $(R:T)=(R:T)_l=(R:T)_r=ann_R(Z({}_RT))$. If $T=R\oplus I$, where $I$ is an ideal of $T$, then we completely determine the relation between Jacobson radicals, lower nilradicals, upper nilradicals, socle and singular ideals of $R$ and $T$. Finally, we study the relation between previous ideals of $R$ and $T$ whenever $R$ or $T$ is a left Artinian ring.
\end{abstract}
\section{Introduction}

\subsection{Motivation}
Let $R\subseteq T$ be an arbitrary ring extension, then there are many important results between certain ideals of $R$ and $T$, such as prime ideals, maximal ideals, primitive ideals, Jacobson radical and lower/upper nilradicals. Especially whenever $R\subseteq T$ is a commutative ring extension, integral extension of commutative rings, polynomial extension of a ring, matrix extension, finitely generated extension, central or normal extension, see \cite{good,lam,rvn,macrob}. In this paper we would like to study the properties between important ideals of $R$ and $T$, when $R$ is a certain maximal subring of a ring $T$, especially in certain cases that we mentioned in the abstract of the paper. If $T$ is a commutative ring and $R$ is a maximal subring of $T$, then it is clear that $N(R)=N(T)\cap R$, where $N(T)$ is the set of all nilpotent elements of a ring $T$. Also note that since in commutative case, either $R$ is integrally closed in $T$ or $T$ is integral over $R$ (i.e., $T$ is a finitely generated $R$-module), then we immediately infer that if $R$ is a maximal subring of $T$ and $T$ is integral over $R$, then $J(R)=J(T)\cap R$. In \cite[Section. 1 and Section. 2 of Chapter IV]{adjex}, the author proved some properties between certain ideals of $R$ and $T$ in a minimal ring extension $R\subseteq T$ of commutative rings (these extensions are called adjacent extension in \cite{adjex}). Also see \cite{akn}, for some other algebraic properties which are shared between a commutative ring and its maximal subrings. In \cite{dorsy} the authors studied minimal ring extensions of non-commutative rings. In fact, the authors generalized the results of \cite{dbsid} for non-commutative rings and characterized exactly (central) minimal ring extension of prime rings and therefore simple rings, see
\cite[Theorem 5.1, Corollary 5.3 and Theorem 6.1]{dorsy}. Also, they proved that a field $K$ has a minimal ring extension of the form $\mathbb{M}_n(D)$ where $D$ is a centrally finite division ring and $n>1$, if and only if $K$ has a proper subfield of finite index, see \cite[Lemma 6.6]{dorsy}. In \cite{azq} the author studied the existence of maximal subrings of non-commutative rings, and in \cite{azcond}, the conductor ideals of maximal subrings in non-commutative rings. We will mention some of needed results from \cite{azq} and \cite{azcond} in the preliminaries subsection. We refer the interested reader to \cite{mscnfsr}, for classification of maximal subrings of finite semisimple rings and application of them to calculating the covering number of a finite semisimple ring.

\subsection{Review of the results}
In Section 2, we consider the case that $R$ is a maximal subring of a ring $T$ such that $T=R\oplus I$, for an ideal $I$ of $T$. We prove that in this case $(R:T)_l=r.ann_R(I)$ and $(R:T)_r=l.ann_R(I)$. In particular, if $I^2\neq 0$ then $(R:T)=(R:T)_l=(R:T)_r$. We also determine when $T$ is a prime, semiprime and left primitive ring. A structure of maximal ideals of $T$ is given too. If $R$ is a prime ring which is a maximal subring of a ring $T$, then we determine $Min_T(0)$ and the conductor ideals of $R\subseteq T$. Moreover, if $R$ is a prime ring which is a maximal subring of a ring $T$ and $\mathcal{A}$ be the set of all nonzero ideals $I$ of $T$ such that $I\cap R=0$, then we prove that $|\mathcal{A}|\leq 2$ and $|\mathcal{A}|=2$ if and only if $T\cong\Delta(R)+(A\times A)$ (as subring of $R\times R$) for some minimal ideal $A$ of $R$ and in this case $Min_T(0)=\mathcal{A}$ and $(R:T)=(R:T)_l=(R:T)_r=0$. Conversely, we prove that if $R$ is a ring which is either prime or reduced, $A$ is a minimal ideal of $R$ and $T=\Delta(R)+(A\times A)$, then for an ideal $I$ of $T$ with $I\cap\Delta(R)=0$ and $I^2=I$, we have $I=A\times 0$ or $I=0\times A$.\\

In Section 3, we study the relation between socle, singular submodule, Jacobson radical, upper nilradical and lower nilradical in the ring extension $R\subseteq T$, where $R$ is a (certain) maximal subring of a ring $T$. We prove that if $T$ is a reduced ring and $R$ is a subring of $T$, then $Soc({}_RT)=Soc(T_R)$ is an ideal of $T$. In particular, if $R$ is a maximal subring of $T$, then either $Soc(R)=Soc({}_RT)$ or there exists a semisimple left (resp. right) $R$-submodule $A$ (resp. $B$) of $T$ such that $T=R\oplus A=R\oplus B$, moreover $(R:T)_l$ and $(R:T)_r$ are right primitive and left primitive ideals of $R$, respectively. Consequently, we prove that if there exists a simple left $R$-module $S$ of $T$, such that $S^2=S\nsubseteq R$, then there exists an idempotent $e$ of $T$ such that $T=R\oplus Re$. We show that if $R$ is a maximal subring of a reduced ring $T$, then $Z({}_RT)=Z(T_R)$ (which is an ideal of $T$, say $Z$) and either $Z=0$ or $T=R\oplus Z$, $(R:T)=(R:T)_l=(R:T)_r=ann_R(Z)$ and $Q=ann_T(Z)$ is a minimal prime ideal of $T$ and $Q\cap R=(R:T)$. We prove that if $R$ is a maximal subring of a prime ring $T$, then either $Z({}_RT)=0$ or $Soc({}_RR)=0$. If $R$ is a maximal subring of a ring $T$ and $T=R\oplus I$, where $I$ is an ideal of $T$, then we prove that either $J(R)=J(T)\cap R$, or $I^2=I$, $J(T)\subseteq (R:T)=(R:T)_l=(R:T)_r=l.ann_T(I)$, which is a primitive and minimal prime ideal of $T$, $dim(T)\geq 2$ and $I$ is unique. A similar result holds if we replace $Nil^*$ instead of Jacobson radical in the previous result. But fortunately, we always have $Nil_*(R)=Nil_*(T)\cap R$. We also prove that, either $Soc({}_RR)=Soc({}_RT)\subseteq (R:T)_r$ or $I$ is a semisimple left $R$-module which all of simple $R$-submodules of $I$ are isomorphic as $R$-module, $J(R)=J(T)\cap R$ and $Nil^*(R)=Nil^*(T)\cap R$. In particular, if either $R$ or $T$ is a one-sided Artinian ring, then $J(R)=J(T)\cap R$ and $Nil^*(R)=Nil^*(T)\cap R$. Moreover, if $R$ is a left Artinian ring, then $Soc({}_RI)=I$. We prove that if $R$ is a maximal subring of a left Noetherian ring $T$, then either $Nil^*(T)\subseteq Nil_*(R)$ or $Nil_*(R)=Nil^*(R)=Nil^*(T)\cap R$ is a nilpotent ideal of $R$. If in addition, $T$ is a left Artinian ring, then either $(R:T)$ is a maximal ideal of $T$, or $R$ is a zero-dimensional ring with only finitely many prime ideals.\\

Finally in Section 4, we focus on the left Artinian maximal subring $R$ of a ring $T$. First, we prove that $DCC$ holds on $Spec(T)$ and if $R$ does not contain any prime ideal of $T$, then $T$ is a zero-dimensional ring with only finitely many maximal ideals. Next, we observe that the algebraic properties of $T$ depend to the properties of $(R:T)_l$ in $T$ as follows: $(1)$ If $(R:T)_l$ is not an ideal of $T$ and $(R:T)_lT=T$, then we prove that ${}_RT$ and $((R:T)_l)_R$ are finitely generated. In particular, ${}_RT$ is a left Artinian $R$-module and hence a left Artinian ring. Moreover, either $J(R)=J(T)\cap R$ or $(R:T)\in Max(T)$, $T(R:T)_r=T$, $Max(R)=\{(R:T)_l, (R:T)_r\}\cup\{P\cap R\ |\ P\in Max(T), P\neq (R:T)\}$ and $|Max(R)|=|Max(T)|+1$. $(2)$ If $(R:T)_l$ is not an ideal of $T$ and $(R:T)_lT$ is a proper ideal of $T$, then we show that $(R:T)_lT\in Max(T)$, $|Max(T)|\leq |Max(R)|$, $R$ does not contains any maximal ideal of $T$, either $(R:T)_lT\in Min_T(0)$ and $J(R)=J(T)\cap R$ or $(R:T)_lT=r.ann_T(I)$ for some ideal $I$ of $T$ with $I^2=0$, and finally either $J(R)=J(T)\cap R$ or $(R:T)_r$ is not a finitely generated right ideal of $R$. $(3)$ $(R:T)_l$ is an ideal of $T$, which implies that $(R:T)_l=(R:T)_r=(R:T)$. In particular, we prove that in this case if $T$ is a zero-dimensional ring, then $J(R)=J(T)\cap R$. We show that $T/(R:T)\cong \mathbb{M}_n(S)$, where $S$ is a ring with at most two nonzero proper ideal (that are maximal ideals) and $S$ has a maximal subring which is a division ring. Moreover, either $T$ is a left Artinian ring or $S$ has a unique nonzero proper ideal.

\subsection{Notations and Definitions}
All rings in this paper are unital and all subrings, modules and homomorphisms are also unital. If $R\subsetneq T$ is a ring extension and there exists no other subring between $R$ and $T$, then $R$ is called a maximal subring of $T$, or the extension $R\subseteq T$ is called a minimal ring extension. If $T$ is a ring and $R$ and $S$ be subrings of $T$, then clearly $T$ is a $(R,S)$-bimodule and therefore we can consider $(R,S)$-subbimodules of $T$. In particular, if $t\in T$, then the $(R,S)$-subbimodule of $T$ which is generated by $t$ is denoted by $RtS=\{\sum_{i=1}^n r_its_i\ |\ r_i\in R, s_i\in S, n\geq 0\}$. It is clear that if $R\subseteq S$ and $t\in S$, then $R+RtS$ and $R+StR$ are subrings of $T$ which contain $R$. In particular, for each $t\in T$, the subrings $R+RtT$ and $R+TtR$ of $T$ contain $R$. Also note that, if $I$ is left (resp. right) ideal of $T$, then $R+IR$ (resp. $R+RI$) is a subring of $T$ which contains $R$. If $T$ is a ring, $I$ is a proper ideal of $T$ and $M$ is a left (resp. right) $T$-module, then $Min_T(I)$, $Max_r(T)$, $Max_l(T)$, $Max(T)$, $Spec(T)$, $l.ann_T(M)$ (resp. $r.ann_T(M)$) denote the set of all minimal prime ideals over $I$ in $T$, the set of all maximal right ideals of $T$, the set of all maximal left ideals of $T$, the set of all maximal ideals of $T$, the set of all prime ideals of $T$, the left annihilator of $M$ in $T$ (resp. the right annihilator of $M$ in $T$), respectively. The set of all left (resp. right) primitive ideals of $T$ is denoted by $Prm_l(T)$ (resp. $Prm_r(T)$). The characteristic of a ring $T$ is denoted by $Char(T)$. If $T$ is a ring, then $dim(T)$ denotes the classical Krull dimension of $T$ (i.e., the supremum of the lengths of all chains of prime ideals of $T$). If $T$ is a ring and $M$ is a left $T$-module, then $Soc({}_TM)$, $Z({}_TM)$ and $J({}_TM)$ denote the socle of $M$, the singular submodule of $M$ and the Jacobson radical of $M$, respectively (we have similar notations for the right $T$-modules). For a ring $T$, $Nil_*(T)$ and $Nil^*(T)$ are the lower nilradical and the upper nilradical of $T$, respectively. It is well known that $Nil_*(T)$ is the intersection of all prime ideals or only the minimal prime ideals of $T$. $Nil^*(T)$ is the unique maximal nil ideal of $T$ which is the sum of all nil ideals of $T$. A ring $T$ is called strongly prime if $T$ is a prime ring with no nonzero nil ideals. An ideal $P$ of a ring $T$ is called a strongly prime ideal if $T/P$ is a strongly prime ring. It is not hard to see that $Nil^*(T)$ is the intersection of all strongly prime ideals of $T$, see \cite[Proposition 2.6.7]{rvn}. If $A$ is a right ideal of a ring $T$, then $\mathbb{I}(A)$ denotes the idealizer of $A$ in $T$, that is the largest subring of $T$ which $A$ is a two-sided ideal in it, i.e., $\mathbb{I}(A)=\{t\in T\ |\ tA\subseteq A\}$, the similar notation is used when $A$ is a left ideal of $T$. For the properties of the previous notations and other used definitions and notations in this paper we refer the reader to \cite{lam} and \cite{rvn}.

\subsection{Preliminaries} In this subsection we review some results from \cite{azq} and \cite{azcond}. First we recall that if $R\subsetneq T$ is a ring extension which is finite as ring (module) over $R$, then one can easily see that $T$ has a maximal subring which contains $R$, by a natural use of Zorn's Lemma. Also, a proper subring $S$ of a ring $T$ is maximal if and only if for each $x\in T\setminus S$, we have $S[x]=T$. Moreover, if $R$ is a proper subring of $T$ and there exists $\alpha\in T$ such that $R[\alpha]=T$, then $T$ has a maximal subring $S$ with $R\subseteq S$ and $\alpha\notin S$. Now let $R$ be a ring and $\Delta(R):=\{(r,r)\ |\ r\in R\}$, be the diagonal subring of $T:=R\times R$. Then it is not hard too see that $\Delta(R)[(1,0)]=T$. Therefore $T$ has a maximal subring which contains $\Delta(R)$. In fact, subrings of $T$ which contains $\Delta(R)$ has the form $\Delta_I(R):=\Delta(R)+(I\times I)$, where $I$ is an ideal of $R$. In particular $\Delta_I(R)$ is a maximal subring of $T$ if and only if $I$ is a maximal ideal of $R$, see \cite[Theorem 2.7]{azq}. Therefore a ring $R$ is simple if and only if $\Delta(R)$ is a maximal subring of $R\times R$. More generally we have the following useful result.

\begin{thm}\label{pt1}
\cite[Theorem 2.9, Corollary 2.10]{azq}. Let $R_1$ and $R_2$ be rings and $T=R_1\times R_2$. Then $T$ has a maximal subring if and only if at least one of the following conditions holds:
\begin{enumerate}
\item $R_1$ or $R_2$ has a maximal subring.
\item There exist (maximal) ideals $I_j\subsetneq R_j$, for $j=1,2$, such that $\frac{R_1}{I_1}\cong \frac{R_2}{I_2}$ as rings.
\end{enumerate}
\end{thm}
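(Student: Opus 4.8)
The plan is to treat the two implications separately, and in both directions to reduce everything to the cited fact that $\Delta(A)$ is a maximal subring of $A\times A$ precisely when $A$ is simple.

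For the \emph{if} direction, suppose first that (1) holds, say $S_1$ is a maximal subring of $R_1$. I would show directly that $S_1\times R_2$ is maximal in $T$. The point is that any subring $U$ with $S_1\times R_2\subseteq U\subseteq T$ contains the ideal $0\times R_2$, hence splits as $U=\pi_1(U)\times R_2$ where $\pi_1$ is the first projection; since $S_1\subseteq\pi_1(U)\subseteq R_1$ and $S_1$ is maximal, $\pi_1(U)$ is $S_1$ or $R_1$, so $U$ is $S_1\times R_2$ or $T$. Now suppose (2) holds. First I would note that we may assume $I_1,I_2$ are maximal ideals: any proper ideal lies in a maximal two-sided ideal, and an isomorphism $R_1/I_1\cong R_2/I_2$ carries a maximal ideal of the source to one of the target, yielding maximal $M_1\supseteq I_1$, $M_2\supseteq I_2$ with $R_1/M_1\cong R_2/M_2$; this justifies the parenthetical ``(maximal)''. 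With $I_1,I_2$ maximal, fix an isomorphism $\psi\colon R_1/I_1\to R_2/I_2$ and set $S=\{(a,b)\in T : \psi(a+I_1)=b+I_2\}$. Since $S\supseteq I_1\times I_2$ and $I_1\times I_2$ is an ideal of $T$, maximality of $S$ in $T$ is equivalent to maximality of $S/(I_1\times I_2)$ in $T/(I_1\times I_2)=R_1/I_1\times R_2/I_2$; the latter is the graph of $\psi$, which after identifying the two factors via $\psi$ is the diagonal of $(R_1/I_1)\times(R_1/I_1)$, hence maximal because $R_1/I_1$ is simple.

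For the \emph{only if} direction, let $S$ be a maximal subring of $T$ and write $e=(1,0)$. I would split on whether $e\in S$. If $e\in S$, then multiplying elements of $S$ by $e$ and $1-e$ shows $S=A\times B$ with $A=\pi_1(S)\subseteq R_1$ and $B=\pi_2(S)\subseteq R_2$ subrings; since $A\times R_2$ and $R_1\times B$ are intermediate subrings, maximality forces $A=R_1$ (with $B$ maximal in $R_2$) or $B=R_2$ (with $A$ maximal in $R_1$), i.e. (1) holds. The substantive case is $e\notin S$. Here I would first show both projections are onto: the subring $S+(R_1\times 0)$ strictly contains $S$ (otherwise $R_1\times 0\subseteq S$ and $e\in S$), so equals $T$, forcing $\pi_2(S)=R_2$; symmetrically $\pi_1(S)=R_1$. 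Setting $I_1=\{a\in R_1:(a,0)\in S\}$ and $I_2=\{b\in R_2:(0,b)\in S\}$, the sets $I_1\times 0=S\cap(R_1\times 0)$ and $0\times I_2=S\cap(0\times R_2)$ are ideals of $S$, and combined with surjectivity of the projections this makes $I_1,I_2$ two-sided ideals of $R_1,R_2$. Passing to $\bar T=R_1/I_1\times R_2/I_2$, the image $\bar S$ projects onto both factors and meets each factor trivially (this is exactly the definition of $I_1,I_2$), so each restricted projection $\bar S\to R_j/I_j$ is a ring isomorphism and $\bar S$ is the graph of an isomorphism $R_1/I_1\cong R_2/I_2$. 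Finally, as $S\supseteq I_1\times I_2$, maximality of $S$ transfers to $\bar S$, so the graph is maximal in $\bar T$; identifying factors as before, the diagonal of $(R_1/I_1)\times(R_1/I_1)$ is maximal, whence $R_1/I_1$ and likewise $R_2/I_2$ are simple and $I_1,I_2$ are maximal ideals, giving (2).

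The main obstacle I anticipate is the case $e\notin S$: one must verify that the contracted sets $I_1,I_2$ are genuinely two-sided ideals (which uses surjectivity of both projections, itself a consequence of maximality and $e\notin S$) and that the resulting subring of the quotient really is the graph of a \emph{ring} isomorphism rather than merely an additive one---a non-commutative Goursat-type step. Once the problem is reduced to a product of the two simple quotients, the cited diagonal criterion finishes both directions.
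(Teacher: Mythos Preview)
The paper does not supply its own proof of this theorem: it is quoted verbatim as a preliminary result from \cite[Theorem 2.9, Corollary 2.10]{azq}, so there is no in-paper argument to compare against. What the paper does provide, just before the statement, is the key building block you invoke: the classification of subrings of $R\times R$ containing $\Delta(R)$ as $\Delta_I(R)=\Delta(R)+I\times I$, and the equivalence ``$\Delta(R)$ maximal in $R\times R$ $\Leftrightarrow$ $R$ simple''.

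Your argument is correct and is the natural one. The split on $e=(1,0)\in S$ versus $e\notin S$ is the right dichotomy; in the first case the factorisation $S=A\times B$ and the two intermediate rings $A\times R_2$, $R_1\times B$ immediately force condition~(1). In the second case your Goursat-type reduction is sound: maximality plus $e\notin S$ gives surjectivity of both projections, surjectivity makes the kernels $I_1,I_2$ two-sided, and the quotient picture identifies $\bar S$ with the graph of a ring isomorphism, which after transporting along $\psi$ becomes $\Delta(R_1/I_1)$, whose maximality forces simplicity of $R_1/I_1$. The reduction of the parenthetical ``(maximal)'' in the \emph{if} direction is also handled correctly. The only cosmetic point worth recording is that $A\times R_2$ being a unital subring uses $e\in S\Rightarrow 1_{R_1}\in A$, which you have implicitly.
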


Therefore if $R_1,\ldots, R_n$ are rings $n\geq 2$, then by the above theorem one can easily see that $T=R_1\times\cdots\times R_n$ has a maximal subring if and only if either there exists $i$ such that $R_i$ has a maximal subring or there exist $i\neq j$ and (maximal) ideals $I_k\subsetneq R_k$ for $k=i,j$ such that $\frac{R_i}{I_i}\cong \frac{R_j}{I_j}$ as rings. Moreover if each $R_i$ is simple and $S$ is a maximal subring of $T$, then there exists $j$ such that either $S\cong\prod_{i\neq j}R_i$ or $S\cong S_j\times \prod_{i\neq j}R_i$, where $S_j$ is a maximal subring of $R_j$.

\begin{thm}\label{pt2}
\cite[Theorem 2.13]{azq}. Let $S$ be a simple ring which is a maximal subring of a ring $R$, then every nonzero proper ideal of $R$ is maximal and $|Max(R)|\leq 2$. Moreover, exactly one of the following holds:
\begin{enumerate}
\item $R$ is simple ring which is a minimal ring extension of $S$.
\item $R$ has distinct maximal ideals $M$ and $N$ such that $S\cong \frac{R}{M}\cong\frac{R}{N}$ as rings, $M\cap N=0$ and $R\cong S\times S$ as rings.
\item $R$ has exactly one nonzero proper ideal $M$, $S\cong\frac{R}{M}$ as rings and $M^2=M$. In particular, either $R$ is a primitive ring or $J(R)=M$.
\item $R$ has exactly one nonzero proper ideal $M$, $S\cong\frac{R}{M}$ as rings and $M^2=0$. In particular, $J(R)=M$.
\end{enumerate}
\end{thm}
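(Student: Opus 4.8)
The plan is to organize everything around a single structural observation: for any nonzero \emph{proper} ideal $I$ of $R$, the intersection $I\cap S$ is a two-sided ideal of the simple ring $S$, hence equals $0$ or $S$. If $I\cap S=S$ then $1\in I$ and $I=R$, contradicting properness, so $I\cap S=0$. Then $S+I$ is a subring of $R$ properly containing $S$ (as $I\neq 0$), so maximality of $S$ forces $S+I=R$. The composite $S\hookrightarrow R\to R/I$ is therefore an isomorphism, since its kernel is $S\cap I=0$ and its image is $(S+I)/I=R/I$. Thus $R/I\cong S$ is simple, which shows $I$ is maximal and simultaneously establishes $R/I\cong S$ in every case where a nonzero proper ideal exists. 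This disposes of the first assertion that every nonzero ideal of $R$ is maximal.

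Next I would bound the number of maximal ideals. If $M$ and $N$ are distinct nonzero (hence maximal) ideals, then $M+N$ is an ideal strictly larger than $M$, so $M+N=R$; and $M\cap N$ is an ideal contained properly in the maximal ideal $M$, so by the first step $M\cap N=0$. With comaximality and zero intersection in hand, a Chinese-Remainder argument (solving $x\equiv a\ (\mathrm{mod}\ M)$, $x\equiv b\ (\mathrm{mod}\ N)$ using $m+n=1$ with $m\in M$, $n\in N$) gives $R\cong R/M\times R/N\cong S\times S$ as rings. Since $S$ is simple, the only nonzero proper ideals of $S\times S$ are $S\times 0$ and $0\times S$, so there are exactly two maximal ideals; in particular a third distinct maximal ideal is impossible, yielding $|Max(R)|\leq 2$ and delivering case $(2)$ verbatim. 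The cases are then separated by the number of nonzero proper ideals: none gives the simple ring of case $(1)$ (where ``minimal ring extension'' is just the hypothesis that $S$ is maximal in $R$), two gives case $(2)$, and exactly one gives cases $(3)$ and $(4)$.

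For the single-ideal case, let $M$ be the unique nonzero proper ideal; then $R/M\cong S$ by the first step, and $M^2$ is an ideal contained in $M$, so $M^2\in\{0,M\}$, which cleanly separates $(3)$ from $(4)$. If $M^2=0$ then $M$ is nilpotent, hence $M\subseteq J(R)$; since $J(R)$ is a proper ideal lying in $\{0,M\}$ and $M\neq 0$, we get $J(R)=M$, which is case $(4)$. If $M^2=M$ then again $J(R)\in\{0,M\}$; the alternative $J(R)=M$ is one branch of $(3)$, and it remains to treat $J(R)=0$.

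The step I expect to be the genuine obstacle is showing that $J(R)=0$ forces $R$ to be \emph{two-sided} primitive. The idea is that $J(R)$ is the intersection of the left primitive ideals, each of which is a proper two-sided ideal and hence lies in $\{0,M\}$; since this family is nonempty, $J(R)=0$ can only hold if $0$ itself is left primitive, i.e.\ $R$ has a faithful simple left module. Because $J(R)$ is left--right symmetric, the identical argument on the right shows $0$ is also right primitive, giving two-sided primitivity. Care is needed here to confirm that the primitive ideals are genuinely confined to $\{0,M\}$ and that the intersection formula for $J(R)$ is applied on both sides; this symmetry is what upgrades ``left primitive'' to the stated two-sided conclusion. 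Finally, I would note the four cases are mutually exclusive by construction—distinguished by the count of nonzero proper ideals and, in the single-ideal case, by $M^2=M$ versus $M^2=0$—so ``exactly one holds.''
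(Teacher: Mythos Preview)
The paper does not actually prove this theorem: it is quoted verbatim as \cite[Theorem 2.13]{azq} in the preliminaries subsection, with no proof supplied. So there is no ``paper's own proof'' to compare against; I can only assess your argument on its merits.

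Your proof is correct and complete. The key structural observation---that any nonzero proper ideal $I$ satisfies $I\cap S=0$, whence $S\oplus I=R$ by maximality and $R/I\cong S$---is exactly the right engine, and you use it cleanly to get maximality of every nonzero ideal, the bound $|Max(R)|\leq 2$ via CRT and the known ideal structure of $S\times S$, and the case split on the number of nonzero proper ideals. The treatment of the single-ideal case is also sound: $M^2\in\{0,M\}$ is forced, the $M^2=0$ branch gives $J(R)=M$ immediately, and in the $M^2=M$ branch with $J(R)=0$ your argument that some (left, respectively right) primitive ideal must be $0$---since all primitive ideals lie in $\{0,M\}$ and their intersection is $0\neq M$---is correct, as is the appeal to left--right symmetry of $J(R)$ to get two-sided primitivity. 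The mutual exclusivity of the four cases is clear from your organization.
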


Let $R\subseteq T$ be a ring extension, then we define $(R:T):=\{x\in T\ |\ TxT\subseteq R\}$, $(R:T)_l:=\{x\in T\ |\ Tx\subseteq R\}$ and $(R:T)_r:=\{x\in T\ |\ xT\subseteq R\}$. In other words, $(R:T)$ is the largest ideal of $T$ which is contained in $R$, $(R:T)_l$ (resp. $(R:T)_r$)  is the largest left (resp. right) ideal of $T$ which is contained in $R$. It is not hard to see that $(R:T)_l=r.ann_R(T/R)$ and therefore $(R:T)_l$ is an ideal of $R$; similarly, $(R:T)_r=l.ann_R(T/R)$ is an ideal of $R$ (note, clearly $T/R$ is a left/right $R$-module). Hence $(R:T)_l$, $(R:T)_r$ and $(R:T)$ are ideals of $R$, which are called, left conductor ideal, right conductor ideal and the conductor ideal of the extension $R\subseteq T$ (or of $T$ in $R$), respectively. It is clear that $(R:T)_l(R:T)_r\subseteq (R:T)\subseteq (R:T)_l\cap (R:T)_r$. In particular, if $R\neq T$, then $(R:T)_l+(R:T)_r\subseteq R$ and therefore $(R:T)_l+(R:T)_r\subsetneq T$. Finally it is not hard to see that $(R:T)=r.ann_T(T/(R:T)_r)=l.ann_T(T/(R:T)_l)=l.ann_R(T/(R:T)_l)=r.ann_R(T/(R:T)_r)$, see \cite[(6) of Corollary 2.2]{azcond}. Moreover, we have the following result.

\begin{lem}\label{pt3}
\cite[Lemma 2.1]{azcond}. Let $R$ be a maximal subring of a ring $T$. Then $(R:T)_l$ and $(R:T)_r$ are prime ideals of $R$.
\end{lem}
\begin{proof}
Let $a,b\in R$ and $aRb\subseteq (R:T)_l$. Thus $TaRb\subseteq R$. Now assume that $a\notin (R:T)_l$, i.e., $Ta\nsubseteq R$. Thus $TaR\nsubseteq R$. Since $R$ is a maximal subring of $T$ we conclude that $R+TaR=T$ and thus $Tb=Rb+TaRb\subseteq R$, i.e., $b\in (R:T)_l$. Hence $(R:T)_l$ is a prime ideal of $R$. Similarly, $(R:T)_r$ is a prime ideal of $R$.
\end{proof}

We refer the reader to \cite{azcond}, for more results about the conductor ideals of maximal subrings (of a non-commutative rings).

\section{Prime ideals}
If $R$ is a (maximal) subring of a ring $T$, then it is clear that for each (nonzero proper) ideal $I$ of $T$ either $I\cap R=0$ or $I\cap R\neq 0$. In fact, if $R$ is a maximal subring of a ring $T$, then one of the following holds:
\begin{enumerate}
\item there exists a nonzero proper ideal $I$ of $T$ such that $R\cap I=0$. Therefore $R\oplus I=T$, for $R$ is maximal.
\item for each proper nonzero ideal $I$ of $R$, we have $R\cap I\neq 0$.
\end{enumerate}
In this section we determine when $T$ is semiprime, prime and left primitive ring in case $(1)$ of the above related to properties of $R$ and $I$. Next, we classify $T$ and $I$ when $R$ is a prime ring and also determine $Min_T(0)$. We remind the reader that the first part of item $(4)$ of the following theorem proved in \cite{dorsy}, we present it again for the sake of completeness and also for the presentation of the statement of it by the notation of conductor ideals.

\begin{thm}\label{t1}
Let $R$ be a maximal subring of a ring $T$. Assume that $T$ has a nonzero ideal $I$ such that $R\cap I=0$. Then the following hold:
\begin{enumerate}
\item $I$ is a minimal ideal of $T$. Moreover, $I$ is maximal respect to the property that $R\cap I=0$.
\item $(R:T)_l=r.ann_R(I)$ and $(R:T)_l=l.ann_R(I)$.
\item If $I^2\neq 0$ (in particular if $T$ is semiprime), then $(R:T)=(R:T)_l=(R:T)_r$.
\item  $T$ is a semiprime ring if and only if $R$ is semiprime and $I^2\neq 0$. In particular, in this case $Q:=ann_T(I)$ is a minimal prime ideal of $T$ and $Q\cap R=(R:T)=(R:T)_l=(R:T)_r$.
\item If $I^2=0$, then the contraction of prime (resp. semiprime, maximal, left primitive, right primitive, strongly prime) ideals of $T$ to $R$ remain prime (resp. semiprime, maximal, left primitive, right primitive, strongly prime). In particular, $J(R)\subseteq J(T)\cap R$, $Nil_*(R)\subseteq Nil_*(T)\cap R$ and $Nil^*(R)\subseteq Nil^*(T)\cap R$. Moreover $U(T)=U(R)+I$.
\item For each left (resp. right) ideal $A$ of $R$, $TA\cap R=A$ (resp. $AT\cap R=A$).
\item If $I^2=I$, then for each ideal $A$ of $R$ either $A\subseteq (R:T)$ or $IA=AI=I$ and $AT=TA$.
\end{enumerate}
\end{thm}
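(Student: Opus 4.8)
The plan is to base everything on the decomposition $T=R\oplus I$ (valid since $R$ is maximal and $R\cap I=0$) and on the ring isomorphism $R\cong T/I$ induced by the projection $T\to T/I$, which is injective on $R$ because $R\cap I=0$ and surjective because $R+I=T$. For (1), if $J$ is a nonzero ideal of $T$ with $J\subseteq I$, then $R+J$ is a subring lying strictly above $R$ unless $J\subseteq R$; maximality forces $R+J=T$, and the modular law gives $I=I\cap(R+J)=(I\cap R)+J=J$, so $I$ is minimal. The maximality of $I$ among ideals meeting $R$ trivially is the same modular-law computation.

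For (2), note $(R:T)_l\subseteq R$ and that for $x\in R$ one has $Tx=Rx+Ix\subseteq R$ iff $Ix\subseteq R\cap I=0$, so $(R:T)_l=r.ann_R(I)$, and symmetrically $(R:T)_r=l.ann_R(I)$. For (3), the same bookkeeping applied to $TxT$ shows $x\in(R:T)$ iff $Ix=0=xI$, i.e. $(R:T)=r.ann_R(I)\cap l.ann_R(I)=(R:T)_l\cap(R:T)_r$ in general. When $I^2\neq0$ (automatic if $T$ is semiprime) minimality gives $I^2=I$, and then $Ix=0$ forces $I\cdot(xI)=0$ with $xI\subseteq I$, whence by minimality ($I^2=I\neq0$) one gets $xI=0$; thus $r.ann_R(I)=l.ann_R(I)$ and all three conductors coincide.

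Item (4) is where I expect the real work. The easy direction assumes $R$ semiprime and $I^2\neq0$ (so $I^2=I$): a square-zero ideal $N$ of $T$ maps to a square-zero ideal of $R\cong T/I$, hence $N\subseteq I$, so $N\in\{0,I\}$ by (1), and $I^2=I$ rules out $N=I$. The harder direction, $T$ semiprime $\Rightarrow R$ semiprime, is the main obstacle, because $R\cong T/I$ is only a quotient of a semiprime ring and such quotients need not be semiprime; so I would argue with a putative square-zero ideal $N$ of $R$ and its $T$-closure $M=TNT$. One checks $M\subseteq N+I$ and $M^2\subseteq I$, so $M^2\in\{0,I\}$: the case $M^2=0$ gives $M=0$ hence $N=0$, while if $M^2=I$ one writes $M=N_0\oplus I$ with $N_0=M\cap R\neq0$ a square-zero ideal of $R$, and the dichotomy $N_0I,IN_0\in\{0,I\}$ gives a contradiction in every sub-case (e.g. $N_0I=I$ yields $I=N_0^2I=0$, and $N_0I=0=IN_0$ makes $TN_0T=N_0$ a nonzero square-zero ideal of $T$). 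Finally $I^2=I$ makes $Q:=ann_T(I)$ a two-sided annihilator which is prime (for ideals, $XY\subseteq Q$ and $YI\neq0$ force $YI=I$, hence $XI=(XY)I=0$, so $X\subseteq Q$) and minimal (any prime $P\subseteq Q$ has $QI=0\subseteq P$ and $I\not\subseteq P$, so $Q\subseteq P$); then (2) and (3) identify $Q\cap R$ with $(R:T)$.

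For (5), when $I^2=0$ the ideal $I$ is nilpotent, so $I\subseteq Nil_*(T)$ and $I\subseteq P$ for every prime $P$; modularity gives $P=(P\cap R)\oplus I$, and through $R\cong T/I$ this realizes contraction $P\mapsto P\cap R$ as the bijection $Spec(T)\to Spec(R)$, under which primeness, semiprimeness, maximality, one-sided primitivity (a simple $T$-module is killed by $I$, since $I^2=0$) and strong primeness are all preserved; intersecting the relevant families yields the stated inclusions for $J$, $Nil_*$, $Nil^*$, and $U(T)=U(R)+I$ follows from the explicit inverse $(r+i)^{-1}=r^{-1}-r^{-1}ir^{-1}$, valid because $I^2=0$. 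Item (6) is the direct computation $TA=A\oplus IA$, whence $TA\cap R=A$ (and symmetrically on the right). For (7), with $I^2=I$ I would observe that $R+IA$ and $R+AI$ are subrings of $T$, the only nontrivial closure relation, $(IA)(IA)\subseteq IA$, using $I^2=I$ together with $A$ being an ideal of $R$; maximality then forces each of $IA,AI$ into $\{0,I\}$. If $A\subseteq(R:T)$ the first alternative holds; otherwise $IA\neq0$ (equivalently $AI\neq0$, by (3)), so both equal $I$, and then $TA=A\oplus I=AT$.
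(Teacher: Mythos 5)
Your proof is correct, but in two places it travels a genuinely different road from the paper, and in a couple of spots the justification is compressed in a way you should make explicit. For items (2) and (3) you work element-wise ($Tx=Rx+Ix$, $TxT=RxR+RxI+IxR+IxI$) where the paper identifies $(R:T)_l$ with $r.ann_R(T/R)$ via $T/R\cong I$; both are fine. The substantive divergence is in (4). For ``$R$ semiprime and $I^2\neq0$ imply $T$ semiprime'' your one-line argument (a square-zero ideal $N$ of $T$ has zero image in $T/I\cong R$, hence $N\subseteq I$, hence $N\in\{0,I\}$, and $N=I$ is excluded by $I^2\neq0$) is considerably shorter than the paper's, which passes through $T=R\oplus A$ and a comparison of annihilators. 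For the converse the paper invokes Theorem \ref{pt3} (that $(R:T)_l$ is a prime ideal of $R$) to place a square-zero ideal $N$ of $R$ inside $ann_R(I)$ and then observes $N$ is an ideal of $T$; your route through $M=TNT$, the dichotomy $M^2\in\{0,I\}$, and the sub-case analysis on $N_0I$, $IN_0$ is self-contained and avoids citing that result, at the cost of more case-checking. What you should spell out: the sets $xI$ (in your proof of (3)) and $N_0I$, $IN_0$ (in (4)) are only one-sided ideals of $T$, so the minimality of $I$ as a \emph{two-sided} ideal does not directly give the dichotomy ``$=0$ or $=I$.'' It does hold, but either because the two-sided ideal they generate ($RxI$, resp.\ $N_0I+IN_0I$) is still annihilated appropriately, or --- the cleaner fix, and the one you yourself use in (7) --- because $R+N_0I$ and $R+IN_0$ are subrings of $T$ contained in $R+I$, so maximality of $R$ plus the modular law forces each product into $\{0,I\}$. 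With that sentence added, every step closes, and the remaining items (1), (5), (6), (7) match the paper's arguments essentially verbatim.
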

\begin{proof}
Since $R$ is a maximal subring of $T$, for each nonzero ideal $J\subseteq I$ of $T$, we infer that $T=R\oplus J$. This immediately shows that $I$ is a minimal ideal of $T$. Similarly, $I$ is maximal respect to property that $R\cap I=0$ and hence $(1)$ holds. As $T=R\oplus I$, we conclude that $T/R\cong I$ as left and right $R$-modules and therefore $(R:T)_l=r.ann_R(T/R)=r.ann_R(I)$ and $(R:T)_r=l.ann_R(T/R)=l.ann_R(I)$, thus $(2)$ holds. Now assume that $I^2\neq 0$. First note that $(R:T)_lI$ is an ideal of $T$ which is contained in $I$. Hence by minimality of $I$ we deduce that either $(R:T)_lI=I$ or $(R:T)_lI=0$. If $(R:T)_lI=I$, then by $(2)$ (i.e., $I(R:T)_l=0$), we have $I^2=((R:T)_lI)^2=0$ which is impossible. Thus $(R:T)_lI=0$ and hence $(R:T)_l\subseteq (R:T)_r$, by $(2)$. Similarly, $(R:T)_r\subseteq (R:T)_l$ and therefore $(3)$ is true. For $(4)$, first assume that $T$ is semiprime (hence $I^2\neq 0$). Let $N$ be an ideal of $R$ such that $N^2=0$. Thus by $(3)$ and $(2)$, $N\subseteq (R:T)=ann_R(I)$, for $(R:T)_l$ is a prime ideal of $R$. Thus $NI=IN=0$. Since $T=R\oplus I$, we immediately conclude that $N$ is an ideal of $T$, thus $N=0$, for $T$ is semiprime. Hence $R$ is semiprime. Conversely, assume that $R$ is semiprime and $I^2\neq 0$. We prove that $T$ is semiprime too. Suppose $A$ be a nonzero ideal of $T$ such that $A^2=0$. Thus $(A\cap R)^2=0$ and therefore $A\cap R=0$, for $R$ is semiprime. Hence $T=R\oplus A$, by maximality of $R$. Thus $R\oplus I=T=R\oplus A$. This immediately implies that $I\cong A$ as left and right $R$-modules. Hence $P:=l.ann_R(A)=l.ann_R(I)=r.ann_R(I)=r.ann_R(A)$, by $(3)$ and $(2)$. Since $A^2=0$ and $T=R\oplus A$, we can easily see that $l.ann_T(A)=P\oplus A=r.ann_T(A)$. Now note that since $I$ and $A$ are minimal ideals of $T$, we conclude that either $I\cap A= I$ or $I\cap A=0$. If $I\cap A=I$, then $I\subseteq A$ and thus $I^2=0$ which is absurd. Therefore $I\cap A=0$, hence $AI=IA=0$. Again by $T=R\oplus A$ and $AI=IA=0$, we deduce that $l.ann_T(I)=P\oplus A=r.ann_T(I)$. Hence $I\subseteq l.ann_T(A)=l.ann(I)$ and therefore $I^2=0$ which is a contradiction. Hence $T$ is a semiprime ring. Now assume that $T$ is a semiprime ring. If $I$ is contained in all minimal prime ideals of $T$, then since $T$ is a semiprime ring we infer that $I=0$, which is impossible. Hence there exists a minimal prime ideal $Q$ of $T$ such that $I\nsubseteq Q$. Since $I$ is a minimal ideal of $T$, we conclude that $I\cap Q=0$. Hence $IQ=QI=0$. From this one can easily see that $Q=ann_T(I)$. Therefore $Q\cap R=ann_R(I)$, which is equal to $(R:T)=(R:T)_l=(R:T)_r$, by $(3)$ and $(2)$. Thus $(4)$ holds. For $(5)$, assume that $I^2=0$, let $Q$ be a (semi)prime ideal of $T$, then clearly $I\subseteq Q$. Therefore $Q\nsubseteq R$ and hence $R+Q=T$, by maximality of $R$. Therefore $R/(R\cap Q)\cong T/Q$, as rings. This immediately implies that the contraction of (semi)prime ideals of $T$ to $R$ remain (semi)prime ideals in $R$. Since each maximal (resp. left primitive, right primitive, strongly prime) is a prime ideal of $T$, by a similar proof we conclude that for each maximal (resp. left primitive, right primitive, strongly prime) ideal $Q$ of $T$, the contraction $R\cap Q$ remains in $R$ with a same property as in $T$. From these one can easily see that the inclusions that mentioned in $(5)$ are valid. For instance we prove $Nil^*(R)\subseteq Nil^*(T)\cap R$. To see this, first note that $Nil^*(T)=\bigcap_{\alpha\in\Gamma}Q_{\alpha}$, where $\{Q_{\alpha}\ |\ \alpha\in \Gamma\}$ is the set of all strongly prime ideals of $T$. By the latter proof note that for each $\alpha\in\Gamma$, $Q_{\alpha}\cap R$ is a strongly prime ideal of $R$. Therefore $Nil^*(R)\subseteq Q_{\alpha}\cap R$, for each $\alpha\in\Gamma$. Hence $Nil^*(R)\subseteq Nil^*(T)\cap R$. For the final equality in $(5)$, note that $I\subseteq J(T)$, and $T=R\oplus I$ (which implies that $R\cong T/I$ as ring), therefore we deduce that an element $t=r+i$, where $r\in R$ and $i\in I$, is a unit in $T$ if and only if $r$ is a unit in $R$. Thus $U(T)=U(R)+I$ and hence we are done for $(5)$. For $(6)$, it is clear that $A\subseteq TA\cap R$. Hence assume that $x\in TA\cap R$. Thus there exist $t_i\in T$ and $a_i\in A$, $1\leq i\leq n$, such that $x=\sum_{i=1}^n t_ia_i$. Since $T=R\oplus I$, we conclude that each $t_i$ has the form $r_i+c_i$, where $r_i\in R$ and $c_i\in I$. Therefore $x-\sum_{i=1}^n r_ia_i=\sum_{i=1}^n c_ia_i\in R\cap I=0$. Thus $x=\sum_{i=1}^n r_i a_i\in A$ and therefore $A=TA\cap R$. For $(7)$, first note that by $(3)$, $(R:T)=(R:T)_l=(R:T)_r$. Hence $A\nsubseteq (R:T)$ if and only if $TA\nsubseteq R$ if and only if $AT\nsubseteq R$. Now assume that $A\nsubseteq (R:T)$. If $AI=0$, then from $T=R\oplus I$, we conclude that $AT=A\subseteq R$, which is absurd. Thus $AI\neq 0$ and similarly $IA\neq 0$. Since $AI$ and $IA$ are contained in $I$, $I\cap R=0$ and $R$ is a maximal subring of $T$, we deduce that $T=R\oplus AI=R\oplus IA$. Again, since $IA,AI\subseteq I$ and $T=R\oplus I$, we deduce that $IA=I=AI$ and therefore $TA=(R\oplus I)A=A\oplus IA=A\oplus AI=A(R\oplus I)=AT$.
\end{proof}

Note that in $(5)$ of the previous theorem, since $I$ is contained in the ideals $J(T)$, $Nil_*(T)$ and $Nil^*(T)$, then each of them are not contained in $R$. Therefore by maximality of $R$ we have $R+J(T)=T$, $R+Nil_*(T)=T$ and $R+Nil^*(T)=T$. Hence we have the natural ring isomorphisms $R/(R\cap J(T))\cong T/J(T)$, $R/(R\cap Nil_*(T))\cong T/Nil_*(T)$ and $R/(R\cap Nil^*(T))\cong T/Nil^*(T)$, similar to the proof of $(5)$ (for $Q$) in the previous theorem. In the next result we determine when $T$ is a prime ring in case $(1)$ that mentioned in the beginning of this section.

\begin{prop}\label{t2}
Let $R$ be a maximal subring of a ring $T$. Assume that $T$ has a nonzero ideal $I$ such that $R\cap I=0$. Then the following are equivalent:
\begin{enumerate}
 \item $R$ is a prime ring and $l.ann_T(I)=0$.
 \item $R$ is a prime ring and $r.ann_T(I)=0$.
 \item $T$ is a prime ring and $I\in Spec(T)$.
\end{enumerate}
Moreover, if any of these equivalent conditions holds, then $(R:T)=(R:T)_l=(R:T)_r=0$.
\end{prop}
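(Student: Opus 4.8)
The plan is to reduce all three conditions to statements about the prime ring $R$ and the minimal ideal $I$, exploiting the decomposition $T=R\oplus I$ furnished by Theorem~\ref{t1}. Since $I$ is an ideal with $R\cap I=0$ and $R+I=T$, the composite $R\hookrightarrow T\to T/I$ is a ring isomorphism, so $T/I\cong R$. Consequently $I\in Spec(T)$ if and only if $R$ is a prime ring, which lets me rewrite condition $(3)$ as ``$T$ is prime and $R$ is prime.'' I will also use repeatedly that, $I$ being a minimal ideal of $T$, the ideal $I^2\subseteq I$ satisfies $I^2=0$ or $I^2=I$.

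Next I would establish the left--right symmetry $l.ann_T(I)=0\Leftrightarrow r.ann_T(I)=0$, which immediately yields $(1)\Leftrightarrow(2)$ since both conditions also require $R$ prime. If $I^2=0$, then $I\subseteq l.ann_T(I)$ and $I\subseteq r.ann_T(I)$, so both annihilators are nonzero and the equivalence holds vacuously. If $I^2=I$, write $L=l.ann_T(I)$ and $K=r.ann_T(I)$; then $L\cap I$ and $K\cap I$ are ideals of $T$ contained in $I$ that cannot equal $I$ (else $I^2=0$), so $L\cap I=K\cap I=0$, and the inclusions $IL\subseteq I\cap L=0$ and $KI\subseteq I\cap K=0$ give $L\subseteq K$ and $K\subseteq L$, whence $L=K$. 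Thus the two annihilators vanish together.

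For $(3)\Rightarrow(1)$, if $T$ is prime then $l.ann_T(I)$ is an ideal of $T$ with $l.ann_T(I)\,I=0$ and $I\neq0$, forcing $l.ann_T(I)=0$; combined with $I\in Spec(T)\Rightarrow R$ prime this gives $(1)$, and symmetrically $(2)$. For $(1)\Rightarrow(3)$, assume $R$ is prime and $l.ann_T(I)=0$; by the previous paragraph $r.ann_T(I)=0$ as well, and since $l.ann_T(I)=0$ forces $I^2\neq0$ we are in the case $I^2=I$. To show $T$ is prime, take ideals $A,B$ of $T$ with $AB=0$; passing to $T/I\cong R$ yields $\bar A\,\bar B=0$ in the prime ring $R$, so $A\subseteq I$ or $B\subseteq I$. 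By minimality a nonzero such ideal equals $I$: if $B=I$ then $AI=0$ gives $A\subseteq l.ann_T(I)=0$, and if $A=I$ then $IB=0$ gives $B\subseteq r.ann_T(I)=0$. Hence one of $A,B$ is zero, so $T$ is prime; together with $R$ prime this is $(3)$.

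The main obstacle is the apparent asymmetry between the hypotheses of $(1)$ and $(2)$ in the argument for $(1)\Rightarrow(3)$: the quotient argument naturally produces either $A=I$ or $B=I$, and closing both cases needs both the left and the right annihilator to vanish. This is exactly resolved by the identity $l.ann_T(I)=r.ann_T(I)$ proved in the second step. Finally, for the ``moreover'' clause, any of the equivalent conditions forces $I^2=I\neq0$, so Theorem~\ref{t1}(3) gives $(R:T)=(R:T)_l=(R:T)_r$, while Theorem~\ref{t1}(2) gives $(R:T)_l=r.ann_R(I)\subseteq r.ann_T(I)=0$; hence all three conductor ideals vanish.
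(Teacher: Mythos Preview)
Your proof is correct and shares the paper's overall strategy: use $T=R\oplus I$ to get $T/I\cong R$ (so $I\in Spec(T)$ iff $R$ is prime), then test primeness of $T$ on ideals $A,B$ with $AB=0$, reducing via minimality of $I$ to the cases $A=I$ or $B=I$. Where you diverge is in handling the left--right asymmetry. You first prove the standalone lemma $l.ann_T(I)=r.ann_T(I)$ when $I^2=I$, and then close both cases directly using the vanishing of the relevant annihilator. The paper instead invokes Theorem~\ref{t1}(4) to conclude $T$ is semiprime, and from $IB=0$ argues $(BI)^2=0\Rightarrow BI=0\Rightarrow B\subseteq l.ann_T(I)=0$, thereby staying with the single hypothesis $l.ann_T(I)=0$ throughout (and treating $(2)\Leftrightarrow(3)$ as the symmetric argument). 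Your route is a bit more self-contained, needing only parts (1)--(3) of Theorem~\ref{t1}, while the paper's route reuses its semiprimeness result; both reach the conclusion with comparable effort, and your ``moreover'' clause is handled exactly as in the paper.
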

\begin{proof}
We prove $(1)\Longleftrightarrow (3)$. The proof of $(2)\Longleftrightarrow (3)$ is similar. Since $R$ is a maximal subring of $T$ and $I$ is a nonzero ideal of $T$ which is not contained in $R$ we have $R\oplus I=T$ and therefore $R\cong T/I$ as rings, for $R\cap I=0$. This immediately implies that $(3)\Longrightarrow (1)$. Hence assume that $(1)$ holds. It is obvious that $I$ is a prime ideal of $T$ for $T/I\cong R$ (as rings) and $R$ is a prime ring. Now assume that $A$ and $B$ are ideals of $T$ such that $AB=0$. Since $I$ is a prime ideal of $T$, the we conclude that either $A\subseteq I$ or $B\subseteq I$. If $A\subseteq I$ and $A\neq 0$, then by minimality of $I$ (by $(1)$ of the previous theorem), we deduce that $A=I$ and therefore $IB=0$. From $l.ann_T(I)=0$, we conclude that $I^2\neq 0$ and therefore $T$ is a semiprime ring by $(4)$ of the previous theorem. Since $IB=0$, we conclude that $(BI)^2=0$ and therefore $BI=0$, for $T$ is a semiprime ring. Thus $B\subseteq l.ann_T(I)=0$. Similarly, if $B\subseteq I$ and $B\neq 0$, we conclude that $A=0$. Hence in any cases we deduce that $A=0$ or $B=0$, i.e., $T$ is prime. The equalities $(R:T)=(R:T)_l=(R:T)_r=0$ follows from $(2)$ and $(3)$ of the previous theorem.
\end{proof}

Next we would like to determine when $T$ is a left/right primitive ring (for case $(1)$ that mentioned in the beginning of this section). First we need the following two lemmas.

\begin{lem}\label{t3}
Let $T$ be a ring and $I$ be a left ideal of $T$. Then either $I$ has a maximal left $T$-submodule or $I\subseteq J(T)$. In particular, if $J(T)=0$, then each nonzero left ideal of $T$ has a maximal left $T$-submodule.
\end{lem}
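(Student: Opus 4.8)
The plan is to prove the contrapositive of the first assertion: assuming $I\nsubseteq J(T)$, I would exhibit an explicit maximal left $T$-submodule of $I$. The starting point is the standard characterization of the Jacobson radical as the intersection of all maximal left ideals of $T$. Hence if $I\nsubseteq J(T)$, there must exist a maximal left ideal $M$ of $T$ with $I\nsubseteq M$; otherwise $I$ would be contained in every maximal left ideal and therefore in $J(T)$.

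Given such an $M$, the key construction is to pass to $I\cap M$, which is a left $T$-submodule of $I$, and to show it is maximal in $I$. First, since $M\subsetneq M+I$ and $M$ is maximal as a left ideal, maximality forces $M+I=T$. Then I would apply the second isomorphism theorem for left $T$-modules to the submodules $I$ and $M$ of ${}_TT$, obtaining $I/(I\cap M)\cong (M+I)/M=T/M$ as left $T$-modules. Because $M$ is a maximal left ideal, $T/M$ is a simple left $T$-module, so $I/(I\cap M)$ is simple as well; equivalently $I\cap M$ is a maximal left $T$-submodule of $I$. Note that $I\cap M\neq I$ precisely because $I\nsubseteq M$, so $I\cap M$ is a proper submodule, and this is the maximal left $T$-submodule we sought.

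Taking the contrapositive then gives the dichotomy: if $I$ has no maximal left $T$-submodule, then $I$ is contained in every maximal left ideal of $T$, hence in their intersection $J(T)$. The \emph{in particular} clause follows at once, since when $J(T)=0$ any nonzero left ideal $I$ satisfies $I\nsubseteq J(T)=0$, and therefore possesses a maximal left $T$-submodule by the main statement.

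I do not expect a serious obstacle; the argument is essentially a reduction to the simple-quotient description of maximal left ideals. The two points requiring care are invoking the correct characterization of $J(T)$ as the intersection of all maximal left ideals (rather than attempting a direct Zorn's lemma argument on submodules of $I$, which would fail because $I$ need not be finitely generated, so a chain of proper submodules could exhaust $I$) and verifying the module isomorphism $I/(I\cap M)\cong T/M$, which is where the maximality of $I\cap M$ genuinely comes from.
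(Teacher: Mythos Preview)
Your proof is correct and follows essentially the same approach as the paper: assume $I\nsubseteq J(T)$, pick a maximal left ideal $M$ with $I\nsubseteq M$, deduce $M+I=T$, and use the isomorphism $I/(I\cap M)\cong T/M$ to conclude that $I\cap M$ is a maximal submodule of $I$. The paper's proof is terser but otherwise identical in structure.
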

\begin{proof}
Assume that $I$ is not contained in $J(T)$. Thus there exists a left maximal ideal $M$ of $T$ such that $I\nsubseteq M$. Therefore by maximality of $M$ we conclude that $M+I=T$. Hence $I/(I\cap M)\cong T/M$ as left $T$-modules. Thus $I\cap M$ is a maximal submodule of $I$. The final part is evident.
\end{proof}

\begin{lem}\label{t4}
Let $T$ be a ring and $I$ be a nonzero minimal proper ideal of $T$. Then the following hold:
\begin{enumerate}
\item if $I$ has a maximal left $T$-submodule, then $l.ann_T(I)$ is a left primitive ideal of $T$.
\item if $T$ is a left Artinian ring, then $l.ann_T(I)\neq 0$ is a maximal ideal of $T$.
\item if $T$ is a left Noetherian ring, then either $T$ is a left primitive ring, or $l.ann_T(I)\neq 0$ is a left primitive ideal.
\end{enumerate}
\end{lem}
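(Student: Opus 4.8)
The plan is to prove (1) directly and then obtain (2) and (3) as consequences, the bridge being the elementary observation that over a left Noetherian ring (and, via Hopkins--Levitzki, over a left Artinian ring) the ideal $I$ is a finitely generated left module, hence automatically has a maximal left $T$-submodule so that (1) applies.

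For (1), I would fix a maximal left $T$-submodule $N$ of $I$, so that $S:=I/N$ is a simple left $T$-module and $P:=l.ann_T(S)$ is, by definition, a left primitive ideal of $T$. The goal is then simply to identify $l.ann_T(I)$ with $P$. The inclusion $l.ann_T(I)\subseteq P$ is immediate, since $tI=0$ forces $t$ to annihilate the quotient $I/N=S$. The reverse inclusion is the heart of the matter, and I expect it to be the only genuine step: because $P$ and $I$ are both two-sided ideals, the product $PI$ is a two-sided ideal of $T$ contained in $I$; since $P$ kills $S$ we also have $PI\subseteq N\subsetneq I$. Now the minimality of $I$ among nonzero ideals leaves only the possibilities $PI=0$ or $PI=I$, and the strict inclusion $PI\subseteq N\subsetneq I$ rules out the latter. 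Hence $PI=0$, i.e. $P\subseteq l.ann_T(I)$, and so $l.ann_T(I)=P$ is left primitive.

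For (3), $T$ left Noetherian makes ${}_TT$ a Noetherian module, so its submodule $I$ is finitely generated; a nonzero finitely generated module always has a maximal submodule (the union of a chain of proper submodules stays proper by finite generation, so Zorn applies). Thus (1) gives that $l.ann_T(I)$ is left primitive, and the dichotomy is then purely formal: either $l.ann_T(I)=0$, in which case $T$ is a left primitive ring, or $l.ann_T(I)\neq 0$ is the asserted left primitive ideal. For (2), Hopkins--Levitzki shows a left Artinian $T$ is left Noetherian, so the same reasoning yields that $l.ann_T(I)$ is left primitive. The only additional input needed is the structure theory: if $l.ann_T(I)=0$ then $T$ is left primitive \emph{and} left Artinian, whence by the Jacobson density theorem together with Wedderburn--Artin it is simple Artinian, $T\cong\mathbb{M}_n(D)$, and in particular prime; otherwise $l.ann_T(I)\neq 0$ is left primitive as required.

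The main obstacle is confined to (1), namely turning the annihilator condition $P\cdot S=0$ into the \emph{ideal} inequality $PI\subseteq N\subsetneq I$ so that the minimality of $I$ can be invoked; everything after that is bookkeeping, with the sole external appeals being the ``finitely generated implies a maximal submodule'' fact and, for the $l.ann_T(I)=0$ branch of (2), the classification of left primitive left Artinian rings.
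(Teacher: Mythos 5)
Your proof is correct and follows essentially the same route as the paper: in (1) you identify $l.ann_T(I)$ with the primitive ideal $P=l.ann_T(I/N)$ by noting $PI\subseteq N\subsetneq I$ and invoking the minimality of $I$ to force $PI=0$, which is exactly the paper's argument, and (2),(3) are then deduced from (1) just as in the paper. The extra detail you supply (finite generation of $I$ via Noetherianity/Hopkins--Levitzki to guarantee a maximal submodule, and Wedderburn--Artin for the $l.ann_T(I)=0$ branch of (2)) only fills in steps the paper leaves implicit.
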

\begin{proof}
$(1)$ Let $N$ be a maximal left $T$-submodule of $I$. Then clearly $P=l.ann_T(I/N)$ is a left primitive ideal of $T$. If $P=l.ann_T(I)$,  then we are done. Hence assume that $PI\neq 0$. By minimality of $I$, we infer that $PI=I$. But clearly $PI\subseteq N$. Hence $I\subseteq N$ which is absurd. Thus $P=l.ann_T(I)$ is a left primitive ideal of $T$. $(2)$ and $(3)$ are immediate consequences of $(1)$, for each cases $I$ has a maximal $T$-submodule, and therefore $P=l.ann_T(I)$ is a left primitive ideal of $T$. For $(3)$ note that, if $P=0$, then $T$ is a left primitive ring and otherwise $P=l.ann_T(I)\neq 0$ is a left primitive ideal of $T$.
\end{proof}

Now the following is in order.

\begin{prop}\label{t5}
Let $R$ be a maximal subring of a ring $T$. Assume that $T$ has a nonzero ideal $I$ such that $R\cap I=0$. Then the following are equivalent:
\begin{enumerate}
\item $R$ is a left primitive ring, $l.ann_T(I)=0$ and $I$ has a maximal left $T$-submodule.
\item $T$ is a left primitive ring and $I$ is a left primitive ideal of $T$.
\end{enumerate}
Moreover, if any of these equivalent conditions holds, then $(R:T)=(R:T)_l=(R:T)_r=0$.
\end{prop}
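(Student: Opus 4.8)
The plan is to transfer primitivity back and forth across the ring isomorphism $R\cong T/I$, which is available because Theorem \ref{t1}(1) gives $T=R\oplus I$ with $I$ a minimal ideal, so that $R\cong T/I$ as rings. The only additional ingredient is Lemma \ref{t4}(1), which converts the hypothesis $l.ann_T(I)=0$ into primitivity of $T$ itself. Throughout I would use that, by definition, an ideal $P$ of $T$ is left primitive precisely when $T/P$ is a left primitive ring, and that a left primitive ring is prime (hence semiprime, with $J=0$).

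For $(2)\Rightarrow(1)$, I would argue as follows. Since $I$ is a left primitive ideal, $T/I$ is a left primitive ring; as $T/I\cong R$, this makes $R$ left primitive. Because $T$ is left primitive it is prime, so the (two-sided) left annihilator of the nonzero ideal $I$ must vanish, giving $l.ann_T(I)=0$. Finally, $T$ primitive forces $J(T)=0$, and then Lemma \ref{t3} guarantees that the nonzero left ideal $I$ has a maximal left $T$-submodule. This yields all three assertions of $(1)$.

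For $(1)\Rightarrow(2)$, the isomorphism $R\cong T/I$ again shows that $R$ being left primitive is equivalent to $I$ being a left primitive ideal of $T$. For the primitivity of $T$ I would invoke Lemma \ref{t4}(1): $I$ is a nonzero minimal proper ideal by Theorem \ref{t1}(1), and by hypothesis it has a maximal left $T$-submodule, so $l.ann_T(I)$ is a left primitive ideal of $T$; but $l.ann_T(I)=0$ by hypothesis, so the zero ideal is left primitive, i.e. $T$ is a left primitive ring.

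For the final ``moreover'' clause, note that whenever the equivalent conditions hold we have $R$ left primitive --- hence prime --- together with $l.ann_T(I)=0$. This is exactly hypothesis $(1)$ of Proposition \ref{t2}, so its conclusion gives $(R:T)=(R:T)_l=(R:T)_r=0$. The only mildly delicate points are the bookkeeping identifications ``left primitive ideal $\Leftrightarrow$ left primitive quotient'' and the correct application of Lemma \ref{t4}(1) to the object $l.ann_T(I)$; I do not expect a genuine obstacle, since all the structural work has already been carried out in Theorem \ref{t1} and Lemmas \ref{t3}--\ref{t4}.
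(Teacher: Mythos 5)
Your proposal is correct and follows essentially the same route as the paper: the isomorphism $R\cong T/I$ to transfer primitivity, Lemma \ref{t4}(1) applied to $l.ann_T(I)=0$ for $(1)\Rightarrow(2)$, primeness of $T$ and Lemma \ref{t3} for $(2)\Rightarrow(1)$. The only cosmetic difference is that you obtain the conductor equalities by citing Proposition \ref{t2}'s ``moreover'' clause rather than quoting $(2)$ and $(3)$ of Theorem \ref{t1} directly, but that clause is itself derived from those same items, so the content is identical.
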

\begin{proof}
First note that since $R$ is a maximal subring of $T$ and $T=R\oplus I$, then we conclude that $T/I\cong R$ as rings. $(1)\Longrightarrow (2)$ Since $R$ is a left primitive we immediately conclude that $I$ is a left primitive ideal of $T$. Also note that by $(1)$ of the previous lemma, $T$ is a left primitive ring. Conversely, assume that $(2)$ holds. Clearly $R$ is a left primitive ring and $l.ann_T(I)=0$ for $T$ is prime. If $I$ has no maximal $T$-submodules, then we deduce that $I\subseteq J(T)=0$, by Lemma \ref{t3}, which is absurd. The equalities $(R:T)=(R:T)_l=(R:T)_r=0$ follows from $(2)$ and $(3)$ of Theorem \ref{t1}.
\end{proof}

Let $S$ be a ring and $\mathcal{P}$ be an algebraic property which preserved by ring isomorphisms. Then we define the set $\Omega_{\mathcal{P}}(S)$ be the set of all (proper) ideals of $S$ such that the ring $S/I$ has property $\mathcal{P}$. For example if $\mathcal{P}$ is the property that a ring is prime, then $\Omega_{\mathcal{P}}(S)=Spec(S)$. Now the following is in order.

\begin{rem}\label{t6}
Let $R$ be a maximal subring of a ring $T$ and $I$ be a nonzero proper ideal of $T$ such that $R\cap I=0$. Therefore $T=R\oplus I$. If $\mathcal{P}$ is an algebraic property which preserved by ring isomorphisms, then $\{K\oplus I\ |\ K\in\Omega_{\mathcal{P}}(R)\}\subseteq \Omega_{\mathcal{P}}(T)$. To see this, note that one can easily see that $K\oplus I$ is an ideal of $T$ and $T/(K\oplus I)=(R\oplus I)/(K\oplus I)\cong R/K$ as rings, and therefore since the property $\mathcal{P}$ is preserved by ring isomorphisms, we immediately conclude the inclusion. In particular, the following hold:
\begin{enumerate}
\item $\{P\oplus I\ |\ P\in Spec(R)\}\subseteq Spec(T)$.
\item $\{M\oplus I\ |\ M\in Max(R)\}\subseteq Max(T)$.
\item $\{P\oplus I\ |\ P\in Prm_l(R)\}\subseteq Prm_l(T)$.
\end{enumerate}
The similar inclusions hold for right primitive ideals, primitive ideals and strongly prime ideals. Moreover, $\{M\oplus I\ |\ M\in Max_l(R)\}\subseteq Max_l(T)$ (the same inclusion holds for $Max_r$ instead of $Max_l$). To see this note that one can easily see that $M\oplus I$ is a proper left ideal of $T$ and $T/(K\oplus I)=(R\oplus I)/(K\oplus I)\cong R/K$ holds as left $R$-modules. This immediately shows that $K\oplus I$ is a maximal left ideal of $T$.
\end{rem}

\begin{prop}\label{t7}
Let $R$ be a maximal subring of a ring $T$ and $I$ be a nonzero proper ideal of $T$ such that $R\cap I=0$. If $M$ is a maximal ideal of $T$, then either $M=(M\cap R)\oplus I$ or $I^2=I$ is not contained in $M$ and one of the following holds:
\begin{enumerate}
\item $(R:T)=(R:T)_l=(R:T)_r=M=ann_T(I)$ is a minimal prime ideal of $T$.
\item $(R:T)=R\cap M$ is a maximal ideal of $R$ and $(R:T)\oplus I$ is a maximal ideal of $T$.
\end{enumerate}
\end{prop}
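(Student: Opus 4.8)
The plan is to split on whether the minimal ideal $I$ lies in $M$, and in the remaining case to identify $M$ with the annihilator of $I$ before reading off the sub-cases from whether $M\subseteq R$. The easy alternative comes first: if $I\subseteq M$, then writing an arbitrary $m\in M$ as $m=r+x$ with $r\in R$ and $x\in I$ (via $T=R\oplus I$), the containment $x\in I\subseteq M$ forces $r=m-x\in M\cap R$; combined with $(M\cap R)+I\subseteq M$ and the directness of $R\oplus I$, this yields $M=(M\cap R)\oplus I$, which is the first alternative.

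So assume $I\nsubseteq M$. Since $I$ is a minimal ideal of $T$ by Theorem \ref{t1}(1) and $I\cap M$ is an ideal of $T$ contained in $I$, minimality forces $I\cap M=0$, whence $IM,MI\subseteq I\cap M=0$. Being a maximal two-sided ideal, $M$ is prime; as $I^2\subseteq M$ would give $I\subseteq M$, we deduce $I^2\neq 0$, and then $I^2=I$ by minimality (since $I^2$ is a nonzero ideal of $T$ inside $I$), which is the asserted ``$I^2=I$ not contained in $M$''. I would then show $M=ann_T(I)$: the left annihilator $l.ann_T(I)$ is a two-sided ideal of $T$ containing $M$ (because $MI=0$), so maximality of $M$ makes it $M$ or $T$, and it cannot be $T$ since $TI\supseteq I^2=I\neq 0$; thus $l.ann_T(I)=M$, symmetrically $r.ann_T(I)=M$, and $ann_T(I)=M$. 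Minimality of this prime follows in one line: for any prime $P\subseteq M$, the relation $IM=0\subseteq P$ gives $I\subseteq P$ or $M\subseteq P$, and the first is excluded by $I\nsubseteq M$, so $P=M$ and $M\in Min(T)$.

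It remains to compute the conductors and separate cases (1) and (2). As $I^2\neq 0$, Theorem \ref{t1}(3) gives $(R:T)=(R:T)_l=(R:T)_r$, and Theorem \ref{t1}(2) identifies this common ideal with $r.ann_R(I)=l.ann_R(I)=ann_R(I)=R\cap ann_T(I)=R\cap M$. Now split on whether $M\subseteq R$. If $M\subseteq R$, then $(R:T)=R\cap M=M=ann_T(I)$, already known to be a minimal prime of $T$, giving case (1). If $M\nsubseteq R$, maximality of $R$ forces $R+M=T$, so the second isomorphism theorem gives $R/(R\cap M)\cong T/M$, which is simple because $M$ is maximal; hence $(R:T)=R\cap M$ is a maximal ideal of $R$, and Remark \ref{t6}(2) applied to this maximal ideal shows $(R:T)\oplus I=(R\cap M)\oplus I$ is a maximal ideal of $T$, giving case (2).

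The main obstacle I anticipate is the bookkeeping that simultaneously pins $M$ down as $ann_T(I)$ and certifies it as a minimal prime; once that identification is secured, the conductor equalities are forced by Theorem \ref{t1}, and the final dichotomy is just the tautological split $M\subseteq R$ versus $M\nsubseteq R$ processed through the second isomorphism theorem and Remark \ref{t6}.
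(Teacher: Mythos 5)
Your proof is correct and follows essentially the same route as the paper: split on $I\subseteq M$, use minimality of $I$ to force $I\cap M=0$ and $I^2=I$ in the second case, identify $M=\mathrm{ann}_T(I)$ as a minimal prime, and then split on $M\subseteq R$. The only (harmless) deviations are that you get $M=(M\cap R)\oplus I$ by directly decomposing elements of $M$ rather than via the maximality of $(M\cap R)\oplus I$ from Remark \ref{t6}, and you compute $(R:T)=R\cap M$ through $ann_R(I)=R\cap\mathrm{ann}_T(I)$ rather than by showing $T(R\cap M)=R\cap M$; both are equivalent to the paper's steps.
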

\begin{proof}
If $M$ contains $I$, then $M$ is not contained in $R$ and therefore by maximality of $R$ we deduce that $R+M=T$. Therefore $R/(R\cap M)\cong T/M$ as rings. Thus $R\cap M$ is a maximal ideal of $R$, therefore by the previous remark we deduce that $(R\cap M)\oplus I$ is a maximal ideal of $T$. Since $(R\cap M)\oplus I\subseteq M$, we infer that the equality holds. Hence assume that $M$ does not contain $I$. By $(1)$ of Theorem \ref{t1}, $I$ is a minimal ideal of $T$, hence either $I^2=0$ or $I^2=I$. The case $I^2=0$, implies that $I\subseteq M$, which is impossible. Thus $I^2=I$ and therefore by $(3)$ of Theorem \ref{t1}, we conclude that $(R:T)=(R:T)_l=(R:T)_r$. Again by minimality of $I$ and the fact that $M$ does not contains $I$ we deduce that $M\cap I=0$ and therefore $IM=MI=0$. Hence $M=l.ann_T(I)=r.ann_T(I)=ann_T(I)$. If $Q$ is a prime ideal of $T$, which properly is contained in $M$, then $MI=0\subseteq Q$, and therefore $I\subseteq Q\subsetneq M$, which is absurd. Thus $M$ is a minimal prime ideal of $T$. Now we have two cases:
\begin{enumerate}
\item $M\subseteq R$ and therefore $M=(R:T)$.
\item $M$ is not contained in $R$. Therefore by maximality of $R$ we deduce that $R+M=T$ and hence $R/(R\cap M)\cong T/M$ as rings. Thus $R\cap M$ is a maximal ideal of $R$. Now since $R\oplus I=T$ and $MI=0$, we infer that $T(R\cap M)=R\cap M$, and hence $R\cap M\subseteq (R:T)$. Thus $R\cap M=(R:T)$ is a maximal ideal of $R$. Clearly $(R:T)\oplus I$ is a maximal ideal of $T$, by the previous remark.
\end{enumerate}
\end{proof}

In the next result we want to exactly determine the set of minimal prime ideals of a ring $T=R\oplus I$, where $I$ is an ideal of $T$ and $R$ is a prime maximal subring of $T$.

\begin{thm}\label{t14}
Let $R$ be a prime ring which is a maximal subring of a ring $T$. Then either for each nonzero ideal $I$ of $T$, $R\cap I\neq 0$, or there exists a nonzero (prime) ideal $I$ of $T$ such that $R\cap I=0$ and one of the following holds:
\begin{enumerate}
\item $T$ is a prime ring. In particular, $(R:T)=(R:T)_l=(R:T)_r=0$.
\item $I^2=0$ and $Min_T(0)=\{I\}$.
\item $I^2=I$, $T$ is semiprime, $Min_T(0)=\{I, ann_T(I)\}$. Moreover, $(R:T)=(R:T)_l=(R:T)_r=ann_T(I)\cap R$.
\end{enumerate}
\end{thm}
\begin{proof}
Assume that there exists a nonzero ideal $I$ of $T$ such that $R\cap I=0$. Clearly, $I$ must a prime ideal of $T$, for $R\cong T/I$ as rings. If $T$ is prime, then $ann_T(I)=0$ and therefore by $(2)$ of Theorem \ref{t1}, $(1)$ holds. Hence assume that $T$ is not prime. By $(1)$ of Theorem \ref{t1}, $I$ is a minimal ideal of $T$ and hence either $I^2=0$ or $I^2=I$. If $I^2=0$, then clearly $Min_T(0)=\{I\}$ and therefore $(2)$ holds. Thus assume that $I^2=I$. By $(4)$ of Theorem \ref{t1}, $T$ is a semiprime ring and $Q=ann_T(I)$ is a minimal prime ideal of $T$. Thus by Proposition \ref{t2} we conclude that $Q\neq 0$, for $T$ is not prime. Now note that since $I^2=I\neq 0$, we deduce that $I\nsubseteq Q$. Therefore by minimality of $I$ we conclude that $I$ and $Q$ are incomparable. Again by minimality of $I$ we obtain that $I$ is a minimal prime ideal of $T$. From $Iann_T(I)=0$, we deduce that $Min(T)=\{I,Q\}$. The final part of $(3)$ is evident by $(4)$ of Theorem \ref{t1}.
\end{proof}

In the next result we exactly determine when in the case $(3)$ of the previous result we have $(R:T)=(R:T)_l=(R:T)_r=0$.

\begin{prop}\label{t15}
Let $R$ be a prime ring which is a maximal subring of a ring $T$. Let $\mathcal{A}$ be the set of nonzero ideals $I$ of $T$, such that $R\cap I=0$. Then $|\mathcal{A}|\leq 2$. In fact, $|\mathcal{A}|=2$ if and only if there exists a minimal ideal $A$ of $R$ such that $T\cong \Delta(R)+(A\times A)$ as subring of $R\times R$. Moreover, in this case $Min(T)=\mathcal{A}$ and $(R:T)=(R:T)_l=(R:T)_r=0$.
\end{prop}
\begin{proof}
Since for each $I\in \mathcal{A}$, $T/I\cong R$ as rings, and $R$ is a prime ring, we infer that each element of $\mathcal{A}$ is a prime ideal of $T$ which is a minimal ideal of $T$, by $(1)$ of Theorem \ref{t1}. Hence, if $I\neq J$ are in $\mathcal{A}$, then $I\cap J=0$ and therefore $IJ=0$. This immediately shows that $\mathcal{A}=\{I, J\}$ and also $I=ann_T(J)$ and $J=ann_T(I)$, by $(4)$ of Theorem \ref{t1} (or $(3)$ of the previous theorem). Therefore by the previous theorem we conclude that $Min_T(0)=\mathcal{A}$ and $(R:T)=(R:T)_l=(R:T)_r= ann_T(I)\cap R= J\cap R=0$. Finally note that since $I\cap J=0$, we conclude that there exists a natural ring embedding, say $\phi$, such that $T\hookrightarrow T/I\times T/J=R\times R$ and since $R$ is a maximal subring of $T$, by the comments in Subsection $1.4$ of Preliminaries in the introduction of this paper, we conclude that there exists a minimal ideal $I$ of $R$ such that $\phi(T)=\Delta(R)+(A\times A)$, hence we are done.
\end{proof}

It is clear that in the previous result, if $I\in\mathcal{A}$ and $I^2=0$, then $\mathcal{A}=\{I\}$. In other words, $I$ is unique ideal of $T$ respect to the property that $R\cap I=0$. Also note that if $R$ is not prime then either $|\mathcal{A}|\leq 1$ or $T\cong \Delta(R)+(A\times A)$, for some minimal ideal $A$ of $R$. Now the following is in order.

\begin{prop}\label{t16}
Let $R$ be a ring, $A$ be a minimal ideal of $R$ and $T=\Delta(R)+(A\times A)$, as subring of $R\times R$. Assume that $I$ be an ideal of $T$ such that $\Delta(R)\cap I=0$ and $I^2=I$. If at least one of the following conditions holds, then either $I=A\times 0$ or $I=0\times A$ (and therefore in any cases, $A^2=A$).
\begin{enumerate}
\item $R$ is a prime ring.
\item $R$ is a reduced ring or $I$ is not nil.
\item $2\notin ann(I)$. In particular in this case,  $I$ is non-commutative (i.e., there exist $a,b\in I$ such that $ab\ne ba$).
\item $Char(R)=0$ and $(\Delta(R): T)=0$ (or only, $2T\nsubseteq \Delta(R)$).
\item $Char(R)=n>0$ is odd.
\end{enumerate}
\end{prop}
\begin{proof}
First note that since $A$ is a minimal ideal of $R$, then by the comments in Subsection $1.4$ of the preliminaries in the introduction of this paper, $\Delta(R)$ is a maximal subring of $T=\Delta(R)+(A\times A)$. Therefore by maximality of $\Delta(R)$ we deduce that $\Delta(R)\oplus I=T$, $\Delta(R)\oplus (A\times 0)=T$ and $\Delta(R)\oplus (0\times A)=T$. Hence by $(1)$ of Theorem \ref{t1}, we conclude that $I$, $A\times 0$, and $0\times A$ are minimal ideals of $T$. If $I\cap (A\times 0)\neq 0$, then by minimality of $I$ we obtain that $I=A\times 0$. Thus we may assume that $I\cap(A\times 0)=0$ and similarly $I\cap(0\times A)=0$. Hence by notation of Proposition \ref{t15}, we have $|\mathcal{A}|\geq 3$ and therefore we conclude that $R$ is not a prime ring. For the next part, let $(x,0)\in I$, then $(x,0)\in T=\Delta(R)\oplus (A\times 0)$. Thus there exist $r\in R$ and $a\in A$ such that $(x,0)=(r,r)+(a,0)$. Therefore $r=0$ and $x=a\in A$. Hence $(x,0)\in I\cap(A\times 0)=0$, i.e., $x=0$. Similarly if $(0,y)\in I$, then $y=0$. Now let $(x,y)\in I$, since $(x,x), (y,y)\in\Delta(R)\subseteq T$ and $I$ is an ideal of $T$, we conclude that $(x^2,xy),(x^2,yx),(xy,y^2), (yx,y^2)\in I$. Clearly $(x^2,y^2)\in I$ and therefore $(0,xy-y^2)\in I$. Hence $xy=y^2$ and therefore $(xy,y^2)\in \Delta(R)\cap I=0$. Thus $y^2=xy=0$. Similarly $x^2=xy=0$, $x^2=yx=0$ and $y^2=yx=0$. Hence $x^2=y^2=xy=yx=0$ and $(x,y)^2=0$, i.e., $I$ is nil. Therefore if $I$ is not nil or if $R$ is a reduced ring (note that if $R$ is a reduced ring, then $R\times R$ and hence $T$ is a reduced ring) we are done. Thus if $(2)$ holds we are done. For $(3)$, note that as we see in the previous part, for each $a\in I$, we have $a^2=0$. This immediately implies that for each $a,b\in I$ we have $ab=-ba$ (note, $a+b\in I$ and therefore $(a+b)^2=0$). Thus for each $a,b, c\in I$, we have $abc=-bac=bca=-abc$ and therefore $2abc=0$. Hence $2I^3=0$. Now since $I=I^2$, we conclude that $I=I^3$ and therefore $2I=0$. Thus if $2\notin ann(I)$, we have a contradiction and hence we are done for the first part of $(3)$. for the final part of $(3)$, note that if $I$ is commutative, then for each $a, b\in I$, we have $ba=ab=-ba$ and therefore $2ba=0$, i.e., $2I=0$ which is absurd. Finally for $(4)$ and $(5)$, since $T=\Delta(R)\oplus I$ and $2I=0$, we conclude that $2T=2\Delta(R)\subseteq \Delta(R)$. Thus, if $Char(R)=0$, then $0\neq 2T\subseteq (\Delta(R):T)$, which is a contradiction by $(4)$. Thus suppose that $Char(T)=n>0$ is odd, then $2T=T\subseteq \Delta(R)$, which is impossible by $(5)$. Thus if each of cases $(1)-(5)$ holds, we deduce that either $I=A\times 0$ or $I=0\times A$.
\end{proof}

\section{Socle, singularity and radicals}
Let $R$ be a (certain) maximal subring of a ring $T$. In this section first we study the relation between the socles of ${}_RR$, ${}_RT$ and $T_R$. Similarly, for singular submodules of ${}_RR$ and ${}RT$. Next we investigate about the relation between radical ideals (i.e., Jacobson radical, upper nilradical and lower nilradical) of $R$ and $T$, when $R$ is a maximal subring of $T$, especially, when $T$ is of the form $T=R\oplus I$, where $I$ is an ideal of $T$. It is well known that if $T$ is a semiprime ring, in particular if $T$ is a reduced ring, then $Soc({}_TT)=Soc(T_T)$. In the following result we have a more result. In fact we show that if $T$ is a reduced ring and $R$ be any subring of $T$, then $Soc({}_RT)=Soc(T_R)$ is an ideal of $T$.

\begin{lem}\label{t8}
Let $R$ be a subring of a reduced ring $T$. Then the following hold:
\begin{enumerate}
\item for each $x\in T$, $r.ann_R(x)=l.ann_R(x)$. In particular, $Rx$ is a simple left $R$-module if and only if $xR$ is a simple right $R$-module.
\item for each $x\in T$ and $n\geq 1$, $l.ann_R(x)=l.ann_R(x^n)$. In particular, $Rx$ is simple if and only if $Rx^n$ is simple.
\item $Soc({}_R T)=Soc(T_R)$ is an ideal of $T$.
\item if $I$ is a simple left $R$-submodule of $T$, then $I^2=I$ if and only if there exists a central idempotent $e$ of $T$ such that $I=Re$.
\end{enumerate}
\end{lem}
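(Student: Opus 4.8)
The plan is to run everything off a few standard features of reduced rings, applied inside $T$. First I would record that in a reduced ring $ab=0$ forces $ba=0$ (since $(ba)^2=b(ab)a=0$), that consequently $ab=0$ gives $aTb=0$ (as $(arb)^2=ar(ba)rb=0$ for every $r\in T$), and that every idempotent of $T$ is central (for an idempotent $e$ and any $r$ one checks $((1-e)re)^2=0$ and $(er(1-e))^2=0$, whence $re=ere=er$). Part $(1)$ is then immediate: for $x\in T$ and $a\in R$, $xa=0\Leftrightarrow ax=0$, so $A:=l.ann_R(x)=r.ann_R(x)$; being simultaneously a left ideal and a right ideal, $A$ is two-sided. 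For the ``in particular'', the maps $r\mapsto rx$ and $r\mapsto xr$ give $Rx\cong R/A$ as left $R$-modules and $xR\cong R/A$ as right $R$-modules. Thus $Rx$ is simple iff $A$ is a maximal left ideal, and $xR$ is simple iff $A$ is a maximal right ideal; since $A$ is two-sided, both conditions say exactly that $R/A$ is a division ring (a unital ring is a division ring precisely when it has no nonzero proper left ideals, and likewise precisely when it has no nonzero proper right ideals), so the two simplicities coincide.

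For part $(2)$ the whole content is the implication $ax^2=0\Rightarrow ax=0$ in a reduced ring. I would prove it in two strokes: from $ax^2=0$ one gets $(xax)^2=x\,(ax^2)\,ax=0$, hence $xax=0$; then $(ax)^2=a(xax)=0$, hence $ax=0$. The equality $l.ann_R(x)=l.ann_R(x^n)$ then follows, the inclusion $\subseteq$ being trivial; for $\supseteq$, given $ax^n=0$ I pick $k$ with $2^k\ge n$, note $ax^{2^k}=0$, and apply the implication above repeatedly (each step reads $a(x^{2^{j}})^2=ax^{2^{j+1}}=0\Rightarrow ax^{2^{j}}=0$), halving the exponent down to $ax=0$. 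The statement about $Rx$ and $Rx^n$ drops out of $Rx\cong R/l.ann_R(x)$, $Rx^n\cong R/l.ann_R(x^n)$, and the fact that $x\ne0\Leftrightarrow x^n\ne0$ in a reduced ring.

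For part $(3)$ I would write each socle as the sum of its simple cyclic submodules and prove $Soc({}_RT)\subseteq Soc(T_R)$, the reverse inclusion being symmetric. If $Rx$ is a simple left submodule and $0\ne rx\in Rx$, then $R(rx)=Rx$ is simple, so by part $(1)$ the right module $(rx)R$ is simple and contains $rx$; hence every element of $Rx$ lies in $Soc(T_R)$. The ideal property is argued the same way: for a simple $Rx$ and $t\in T$ with $xt\ne0$ one has $l.ann_R(x)\subseteq l.ann_R(xt)\subsetneq R$, so by maximality $l.ann_R(xt)=l.ann_R(x)$ and $R(xt)$ is simple; thus $Rx\cdot t=R(xt)\subseteq Soc({}_RT)$, making $Soc({}_RT)$ a right ideal of $T$, and the symmetric computation with the simple right submodules of $Soc(T_R)$ shows it is also a left ideal.

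Finally, part $(4)$. The backward direction is a direct computation: if $e$ is a central idempotent then $(Re)^2=Re$, so $I^2=I$. For the forward direction, write $I=Rx$ with $x\ne0$. Since $x^2=x\cdot x\in I^2=I$ and $x^2\ne0$ by reducedness, $Rx^2$ is a nonzero submodule of the simple module $I$, so $Rx^2=Rx$ and $x=dx^2$ for some $d\in R$. Setting $e:=dx\in I$, the relation $x=dx^2$ reads $(1-e)x=0$, whence $x(1-e)=0$ by reducedness; so $ex=x=xe$, i.e.\ $xdx=x$, and then $e^2=d(xdx)=dx=e$. Being idempotent in the reduced ring $T$, $e$ is central; and since $0\ne e\in I$ with $I$ simple, $Re=I$. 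I expect the main obstacle to be exactly this construction of the central idempotent --- extracting the factorization $x=dx^2$ from $I^2=I$ via simplicity and then coaxing idempotency out of it through the reduced-ring identity $x(1-e)=0$; by contrast the computations in parts $(2)$ and $(3)$ are largely mechanical once the reduced-ring lemmas are in hand.
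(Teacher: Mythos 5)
Your proof is correct and follows essentially the same route as the paper: everything rests on the reduced-ring facts that $ab=0\Leftrightarrow ba=0$ and that idempotents are central, with the same socle and idempotent constructions. The only cosmetic differences are your exponent-halving trick in $(2)$ where the paper uses a direct induction, and your extraction of $e=dx$ from $x=dx^2$ in $(4)$ where the paper instead uses $Ix=I$ to write $x=ax$ with $a\in I$ and shows $a-a^2\in I\cap l.ann_T(x)=0$.
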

\begin{proof}
First note that since $T$ is a reduced ring, we conclude that whenever $a,b\in T$, then $ab=0$ if and only if $ba=0$. To see this, assume that $ab=0$, then $(ba)^2=0$ and therefore $ba=0$. This immediately implies that $A:=r.ann_R(x)=l.ann_R(x)$ and therefore $A$ is an ideal of $R$. Hence $Rx$ is a simple left $R$-module if and only if $xR$ is a simple right $R$-module. Hence $(1)$ holds. To see $(2)$, it suffices to show that $l.ann_R(x^n)\subseteq l.ann_R(x)$. We prove by induction on $n\geq 2$. For $n=2$, if $rx^2=0$, then $(rx)x=0$. Thus $x(rx)=0$ and hence $(rx)^2=rxrx=0$. Therefore $rx=0$, for $T$ is a reduced ring. Hence assume that it is true for $n-1$, we prove it for $n$. Suppose that $rx^n=0$. Therefore $(rx)x^{n-1}=0$. Hence $rx\in l.ann_R(x^{n-1})=l.ann_R(x)$. Therefore $rx^2=0$ and thus $rx=0$. Hence $(2)$ holds. For $(3)$ it suffices to show that $Soc({}_R T)$ is a right ideal of $T$. To see this, let $S$ be a simple left $R$-submodule of $T$ and $t\in T$. Then $St$ (which is a homomorphic image of $S$) is either $0$ or is isomorphic to $S$ as left $R$-module. Thus $St\subseteq Soc({}_R T)$. Hence $Soc({}_R T)$ is a right ideal of $T$. Similarly $Soc(T_R)$ is a left ideal of $T$. Now by $(1)$ we immediately conclude that $Soc({}_R T)=Soc(T_R)$ and therefore $(3)$ holds. Finally, for $(4)$ it is clear that if $e$ is a central idempotent of $T$ such that $I=Re$, then $I^2=I$. Conversely, assume that $I$ is a simple left $R$-submodule of $T$, such that $I^2=I$. Assume that $x\in I$ and $x\neq 0$. Then clearly, $0\neq x^2\in Ix$ and therefore $0\neq Ix\subseteq I^2\subseteq I$. Hence $Ix=I$, for $I$ is simple. Therefore there exists $a\in I$ such that $x=ax$. Thus $ax=a^2x$. Now note that $a-a^2\in I$, for $I^2=I$ and $a-a^2\in l.ann_T(x)$. Thus $a-a^2\in I\cap l.ann_T(x)$. Since $I$ is simple and $Ix=I$, we immediately deduce that $I\cap l.ann_T(x)=0$ and therefore $a^2=a$. Clearly $I=Ra$ and $a$ is central in $T$ (for $T$ is reduced).
\end{proof}

By $(3)$ of the previous lemma we have the following immediate corollary.

\begin{cor}
Let $R$ be a subring of a reduced ring $T$. Then ${}_RT$ is semisimple if and only if $T_R$ is semisimple.
\end{cor}

By assuming $R$ is a maximal subring of a ring $T$ in Lemma \ref{t8} we have the following result.

\begin{prop}\label{t9}
Let $R$ be a maximal subring of a ring $T$. Then either $Soc({}_RR)=Soc({}_RT)$ (resp. $Soc(R_R)=Soc(T_R)$) or there exists a semisimple left (resp. right) $R$-submodule $A$ (resp. $B$) of $T$ such that $T=R\oplus A$ (resp. $T=R\oplus B$), in particular in the latter case, $(R:T)_r$ (resp. $(R:T)_l$) is a left (resp. right) primitive ideal of $R$.
\end{prop}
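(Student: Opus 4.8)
The plan is to first show that $W:=Soc({}_RT)$ gives rise to a subring, namely that $R+W$ is a subring of $T$, and then to invoke maximality. I would begin by recording that $Soc({}_RR)=W\cap R$: indeed $W\cap R$ is a semisimple left $R$-submodule of $T$ lying inside $R$, hence contained in $Soc({}_RR)$, while $Soc({}_RR)$ is a semisimple submodule of $T$ contained in $R$, giving the reverse inclusion. Consequently, if $W\subseteq R$ then $W=W\cap R=Soc({}_RR)$ and the first alternative holds, so I may assume $W\nsubseteq R$. The key observation is that $W$ is stable under right multiplication by $R$ and is closed under multiplication: for $r\in R$ the map $\phi_r\colon t\mapsto tr$ is left $R$-linear, so it sends any simple left $R$-submodule $S\subseteq W$ onto $Sr$, which is $0$ or simple, whence $Sr\subseteq W$ and $WR\subseteq W$; applying the same reasoning to $\psi_w\colon t\mapsto tw$ for $w\in W$ (also left $R$-linear) gives $Sw\subseteq W$, so $WW\subseteq W$. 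Therefore $R+W$ is closed under multiplication and, containing $1$, is a subring of $T$ properly containing $R$ (since $W\nsubseteq R$). By maximality $R+W=T$; writing $W=Soc({}_RR)\oplus A$ for a semisimple complement $A$ and using $Soc({}_RR)\subseteq R$ yields $T=R+W=R\oplus A$ with $A\neq 0$ a semisimple left $R$-module, which is the second alternative.

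It remains to prove the ``in particular'' assertion, and this is the main obstacle. Since $T=R\oplus A$ gives $T/R\cong A$ as left $R$-modules, we have $(R:T)_r=l.ann_R(A)$, which is prime by Theorem \ref{pt3}; it suffices to show that $A$ is homogeneous (isotypic), for then $l.ann_R(A)=l.ann_R(S)$ for the common simple type $S$, a left primitive ideal. To force homogeneity I would pass to the canonical isotypic decomposition $W=\bigoplus_{[S]}W_{[S]}$. The left $R$-linear maps $\phi_r$ and $\psi_w$ above preserve isomorphism type, so each homogeneous component satisfies $W_{[S]}R\subseteq W_{[S]}$ and $W_{[S]}W_{[S']}\subseteq W_{[S]}$. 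Choosing the complement $A$ compatibly with this decomposition, so that $A=\bigoplus_{[S]}A_{[S]}$ with $A_{[S]}=A\cap W_{[S]}$, one checks that for every set $\Sigma$ of types the sum $R+\bigoplus_{[S]\in\Sigma}W_{[S]}=R\oplus\bigoplus_{[S]\in\Sigma}A_{[S]}$ is again a subring containing $R$, the closure conditions following from the displayed inclusions. Maximality then forces $\bigoplus_{[S]\in\Sigma}A_{[S]}\in\{0,A\}$ for every $\Sigma$, which is impossible if two distinct types occurred in $A$; hence $A$ is homogeneous and $(R:T)_r=l.ann_R(A)$ is left primitive.

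The right-handed statement follows by the symmetric argument applied to $Soc(T_R)$, using that the left multiplications $t\mapsto wt$ are right $R$-linear and that $(R:T)_l=r.ann_R(B)$ for the corresponding semisimple right $R$-module complement $B$. I expect the dichotomy and the construction of the subring $R+W$ to be routine once one notices the multiplicative closure of the socle; the genuine difficulty lies in the primitivity conclusion, where the point is to recognize that each isotypic component of $W$ is right $R$-stable with $W_{[S]}W_{[S']}\subseteq W_{[S]}$, so that \emph{every} union of homogeneous components yields an intermediate subring, and maximality collapses $A$ to a single isomorphism type.
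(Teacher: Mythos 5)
Your proposal is correct. The first half --- showing $W:=Soc({}_RT)$ satisfies $RW, WR, WW\subseteq W$, so that $R+W$ is an intermediate subring, and then invoking maximality to obtain $T=R\oplus A$ for a semisimple complement $A$ of $Soc({}_RR)$ in $W$ --- is essentially the paper's argument; the paper records the slightly stronger fact $WT\subseteq W$ (i.e.\ $W$ is a right ideal of $T$), imported from the proof of $(3)$ of Lemma~\ref{t8}, but your weaker closure properties suffice for the subring claim. Where you genuinely diverge is the primitivity of $(R:T)_r$: the paper observes that $T/R\cong Soc({}_RT)/Soc({}_RR)$ is a nonzero semisimple left $R$-module and then simply cites \cite[Corollary 2.5]{azcond}, whereas you prove the statement from scratch by forcing $A$ to be isotypic. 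Your homogeneity argument is sound: every right multiplication $t\mapsto tu$ ($u\in T$) is left $R$-linear and so preserves isomorphism type, giving $W_{[S]}T\subseteq W_{[S]}$ for each homogeneous component; hence $R+\bigoplus_{[S]\in\Sigma}W_{[S]}=R\oplus\bigoplus_{[S]\in\Sigma}A_{[S]}$ is an intermediate subring for \emph{every} set $\Sigma$ of types, and maximality applied to singletons collapses $A$ to a single type, after which $(R:T)_r=l.ann_R({}_R(T/R))=l.ann_R(A)=l.ann_R(S)$ is left primitive as the annihilator of a simple module. What your route buys is a self-contained proof (at the cost of the isotypic bookkeeping); the paper's route is shorter but defers exactly this point to an external corollary, whose proof presumably rests on the same homogeneity phenomenon. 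One cosmetic remark: the primeness of $(R:T)_r$ from Theorem~\ref{pt3}, which you invoke in passing, is not actually needed once homogeneity is established.
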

\begin{proof}
Assume that $Soc({}_RR)\neq Soc({}_RT)$. Hence there exists a semisimple left $R$-submodule $A$ of $T$ such that $Soc({}_R T)=Soc({}_R R)\oplus A$. Since $R$ is a maximal subring of $T$ and $Soc({}_R T)$ (by a similar proof of the previous theorem) is a right $T$-submodule of $T$ (i.e., a right ideal of $T$) which also a left $R$-submodule of $T$ and is not contained in $R$, we deduce that $R+Soc({}_R T)=T$. Hence $R+(Soc({}_R R)\oplus A)=T$. Also, note that $R\cap A=0$, for $R\cap A$ is a semisimple left $R$-submodule which is contained in $Soc({}_RR)\cap A=0$. Therefore $R\oplus A=T$. Finally, note that $R+Soc({}_RT)=T$, implies that $T/R\cong Soc({}_R T)/Soc({}_RR)$ as left (and right) $R$-module. Hence by \cite[Corollary 2.5]{azcond}, we immediately deduce that $(R:T)_r$ is a left primitive ideal of $R$, for $Soc({}_R T)/Soc({}_RR)$ is a nonzero semisimple left $R$-module.
\end{proof}

Note that by $(3)$ of Lemma \ref{t8}, if $R$ is a subring of a reduced ring $T$, then $Soc({}_R T)=Soc(T_R)$ is an ideal of $T$, we denote it by $Soc_R(T)$ (in particular, if $R=T$, then we denote it by $Soc(R)$). Now the following is in order.

\begin{cor}\label{t10}
Let $R$ be a maximal subring of a reduced ring $T$. If $Soc(R)\neq Soc_R(T)$, then there exists $x\in T\setminus R$, such that $Rx$ and $xR$ are simple left and right $R$-submodules of $T$, respectively and $T=R+RxT=R+TxR$. In particular, $RxT$ and $TxR$ are semisimple left and right $R$-submodules of $T$, respectively. Moreover, if $I^2=I\nsubseteq R$ is a simple left (right) $R$-submodule of $T$, then $T=R\oplus Re$ and $Soc_R(T)=Soc(R)\oplus Re$.
\end{cor}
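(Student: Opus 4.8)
The plan is to first locate a single element $x\in T\setminus R$ whose left and right cyclic modules are both simple, and then to exploit the maximality of $R$ twice. By $(3)$ of Lemma~\ref{t8} the set $Soc_R(T)=Soc({}_RT)=Soc(T_R)$ is an ideal of $T$ and, in particular, a semisimple left $R$-module. I would first record the standard module-theoretic identity $Soc({}_RR)=Soc({}_RT)\cap R$: indeed, a submodule of a semisimple module is semisimple, so $Soc({}_RT)\cap R$ is a semisimple left $R$-submodule of $R$ and hence lies in $Soc({}_RR)$, while the reverse inclusion is obvious. Thus $Soc(R)=Soc_R(T)\cap R$, and the hypothesis $Soc(R)\neq Soc_R(T)$ forces $Soc_R(T)\nsubseteq R$. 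Since $Soc_R(T)$ is the sum of all its simple left $R$-submodules, not all of them can lie in $R$; choosing one, say $S\nsubseteq R$, and any $x\in S\setminus R$, we get $0\neq Rx\subseteq S$, so $Rx=S$ is simple. Because $T$ is reduced, $(1)$ of Lemma~\ref{t8} then gives that $xR$ is simple as well, for the same $x\notin R$.

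Next I would prove $T=R+RxT=R+TxR$. As noted in the Notations subsection, $R+RxT$ and $R+TxR$ are subrings of $T$ containing $R$; moreover $x=1\cdot x\cdot 1$ lies in both $RxT$ and $TxR$, so each of these subrings properly contains $R$. Maximality of $R$ then yields the two equalities at once. For the ``in particular'' assertion, write $RxT=\sum_{t\in T}Rxt$; each $Rxt$ is the image of the simple module $Rx$ under the left $R$-homomorphism given by right multiplication by $t$, hence is either $0$ or isomorphic to $Rx$. Thus $RxT$ is a sum of simple left $R$-modules, i.e.\ semisimple, and symmetrically $TxR$ is a semisimple right $R$-module.

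For the final assertion, suppose $I^2=I\nsubseteq R$ is a simple left (resp.\ right) $R$-submodule of $T$. By $(4)$ of Lemma~\ref{t8} there is a central idempotent $e$ of $T$ with $I=Re\,(=eR)$. Since $e$ is central, $R+Re$ is a subring of $T$, and it properly contains $R$ because $Re=I\nsubseteq R$; hence $R+Re=T$ by maximality. The sum is direct: $R\cap Re=R\cap I$ is a proper submodule of the simple module $I$ (proper precisely because $I\nsubseteq R$), so $R\cap I=0$ and $T=R\oplus Re$. Finally, $Re=I\subseteq Soc_R(T)$, so by the modular law together with the identity $Soc(R)=Soc_R(T)\cap R$ established above, $Soc_R(T)=Soc_R(T)\cap(R\oplus Re)=(Soc_R(T)\cap R)\oplus Re=Soc(R)\oplus Re$.

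The only genuinely delicate step is the construction of $x$: one must ensure the chosen simple submodule $S$ actually escapes $R$, which is exactly what the identity $Soc(R)=Soc_R(T)\cap R$ guarantees, and one must spot that $R+RxT$ (rather than $Rx$ alone) is the right intermediate subring on which to invoke maximality. Once $x$ is in hand, the semisimplicity of $RxT$ and the final direct-sum decomposition are routine, resting only on the homomorphic-image description of $RxT$ and on the modular law.
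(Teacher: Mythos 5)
Your proof is correct and follows essentially the same route as the paper: Lemma~\ref{t8} parts $(1)$, $(3)$, $(4)$ combined with the maximality of $R$ applied to the subrings $R+RxT$, $R+TxR$ and $R+Re$. The only (welcome) differences are cosmetic: you make explicit the identity $Soc(R)=Soc_R(T)\cap R$ that guarantees the existence of $x$, and you obtain $Soc_R(T)=Soc(R)\oplus Re$ via the modular law rather than the paper's complement-and-kill argument.
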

\begin{proof}
Let $Rx$ be a simple left $R$-submodule of $T$ which is not contained in $R$. Clearly, $Rx\cap R=0$ and by $(1)$ of Lemma \ref{t8}, $xR$ is a simple right $R$-module of $T$ and therefore $xR\cap R=0$. Hence $RxT$ and $TxR$ are contained in $Soc_R(T)$ but not in $R$ (see the proof of $(3)$ of Lemma \ref{t8}). Hence $RxT$ and $TxR$ are semisimple left and right $R$-submodules of $T$, respectively. Thus by the maximality of $R$, we have $R+RxT=T=R+TxR$. Now let $I$ be a simple left $R$-module of $T$, $I\nsubseteq R$ and $I^2=I$. By $(4)$ of Lemma \ref{t8}, there exists a central idempotent $e$ of $T$, such that $I=Re$ (hence $I=eR$ is a simple right $R$-submodule of $T$). Clearly $R\cap I=0$, for $I$ is a simple which is not contained in $R$. Thus $R+I=R\oplus Re$ is a subring of $T$ (note that $I^2=I$) which properly contains $R$. Thus we conclude that $T=R\oplus Re$. Finally note that $Soc_R(T)=(Soc(R)\oplus Re)\oplus A$ for some right $R$-submodule $A$ of $T$ and therefore $R\oplus Re=T=R+Soc_R(T)=R\oplus (Re\oplus A)$. Hence if $x\in A$, then $x=r+ae$ for some $r,a\in R$ and therefore $x-ae=r\in R\cap (Re\oplus A)=0$. Hence $x=ae\in Re$ and therefore $A=0$, i.e., $Soc_R(T)=Soc(R)\oplus Re$.
\end{proof}

Next we want to study the relation between $Z({}_RT)$ and $Z({}_RR)$ for a (maximal) subring $R$ of a ring $T$. First we need the following lemma.

\begin{lem}\label{t11}
Let $R$ be a subring of a ring $T$. Then $Z({}_R T)$ is a proper $(R,T)$-subbimodule of $T$ which contains $Z({}_R R)$. In particular, if $R$ is a maximal subring of $T$, then either $Z({}_RT)=Z({}_RR)$ or $R+Z({}_RT)=T$ and therefore $R/Z({}_RR)\cong T/Z({}_RT)$ as left and right $R$-modules; moreover, in this case $TZ({}_RR)$ and $Z({}_RR)T$ are contained in $Z({}_R T)$. In particular, if $T$ is a prime ring, $R$ is a maximal subring of $T$ and $Z({}_RT)\neq Z({}_RR)$, then either $Soc({}_RR)=0$ or $Z({}_RR)=0$.
\end{lem}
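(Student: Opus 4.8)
The plan is to prove the three module‑theoretic assertions by the standard calculus of the singular submodule, to read the maximal‑subring dichotomy off the fact that $R+Z({}_RT)$ is a subring, and to settle the prime case by pairing a one‑sided socle/singular orthogonality against primeness. Write $Z=Z({}_RT)$ and $l.ann_R(t)=\{r\in R\mid rt=0\}$. I would first check that $Z$ is a left $R$-submodule (the intersection of essential left ideals is essential, and for $s\in R$ the left ideal $(l.ann_R(t):s)=\{r\mid rs\in l.ann_R(t)\}$ is essential whenever $l.ann_R(t)$ is) and a right $T$-submodule (for $s\in T$ one has $l.ann_R(ts)\supseteq l.ann_R(t)$, so right multiplication cannot shrink an essential annihilator). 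Since $l.ann_R(1)=0$ is not essential, $1\notin Z$ and $Z$ is proper; and for $x\in Z({}_RR)$ the annihilator $l.ann_R(x)$ is computed identically in $R$ or in $T$, so $Z({}_RR)\subseteq Z$. This gives the first sentence.

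Assuming now that $R$ is maximal in $T$: because $Z$ is at once a left $R$-submodule and a right $T$-submodule, $(R+Z)(R+Z)\subseteq R+RZ+ZR+ZZ\subseteq R+Z$, so $R+Z$ is a subring containing $R$ and maximality forces $R+Z=R$ or $R+Z=T$. If $R+Z=R$ then $Z\subseteq R$, whence $Z=Z\cap R=Z({}_RR)$. If $R+Z=T$, restrict the quotient map $T\to T/Z$ to $R$: it is left and right $R$-linear, surjective, with kernel $R\cap Z=Z({}_RR)$ (a two-sided ideal of $R$), giving $R/Z({}_RR)\cong T/Z$ as left and right $R$-modules. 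For the ``moreover'', $Z({}_RR)T\subseteq ZT\subseteq Z$ always, and when $R+Z=T$ one has $TZ({}_RR)=RZ({}_RR)+ZZ({}_RR)\subseteq Z({}_RR)+ZT\subseteq Z$.

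For the prime statement suppose $Z\neq0$ and $S:=Soc({}_RR)\neq0$, aiming for a contradiction. The decisive fact is $S\,Z=0$: for a minimal left ideal $L$ and $z\in Z$, the essential ideal $l.ann_R(z)$ meets $L$, so $L\subseteq l.ann_R(z)$, i.e. $Lz=0$; summing over minimal left ideals gives $SZ=0$. On the other hand $Z$ is a nonzero right ideal of the prime ring $T$, so $r.ann_T(Z)$ is a two-sided ideal with $(TZ)\,r.ann_T(Z)=0$ and $TZ\neq0$, forcing $r.ann_T(Z)=0$. In the case $R+Z=T$ these combine at once: $ST=SR+SZ=SR\subseteq S$, so $S$ is a \emph{right} ideal of $T$; then $r.ann_T(S)$ is a two-sided ideal, equal to $0$ by primeness, and $SZ=0$ forces $Z\subseteq r.ann_T(S)=0$, a contradiction.

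The remaining case $Z=Z({}_RR)\subseteq R$ is where the work lies. If $Z$ is a two-sided ideal of $T$ (equivalently its left idealizer, a subring containing $R$, equals $T$), then $l.ann_T(Z)$ is a two-sided ideal annihilating the nonzero ideal $Z$, hence $0$, and $S\subseteq l.ann_T(Z)=0$ gives the contradiction. The delicate situation is the left idealizer equal to $R$, i.e. $Z$ a right but not a left ideal of $T$. Every nonsingular minimal left ideal $Re$ ($e^2=e$) can still be excluded: for $z\in Z\subseteq R$ one has $ze\in Re\cap Z({}_RR)=0$, so $Z\cdot Re=0$ and $Re\subseteq r.ann_T(Z)=0$, impossible; hence all minimal left ideals would be nilpotent, contained in $Z$, with $S^2=0$. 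I expect this nilpotent/singular subcase to be the main obstacle: the orthogonality we control is one‑sided ($SZ=0$, while $Z\,S=0$ fails in general, since left annihilation need not imply right annihilation outside reduced rings as in Lemma~\ref{t8}), and products such as $L\,T\,L$ or $S\,T\,S$ cannot be bounded because $Z$ is not a left ideal, so the ``middle $T$'' is uncontrolled. The plan is to resolve it through the idealizer structure of the right ideal $Z=\,$ with $\mathbb{I}_l(Z)=R$ — showing a nilpotent minimal left ideal of $R$ would generate a nonzero nilpotent one‑sided ideal of $T$, against semiprimeness — or else to prove that this configuration is incompatible with $Soc({}_RR)\neq0$.
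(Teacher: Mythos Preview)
Your treatment of the bimodule structure of $Z=Z({}_RT)$, the maximal-subring dichotomy, the isomorphism $R/Z({}_RR)\cong T/Z$, and the inclusions $Z({}_RR)T,\,TZ({}_RR)\subseteq Z$ in the case $R+Z=T$ is essentially identical to the paper's (your derivation of $TZ({}_RR)\subseteq Z$ by writing $T=R+Z$ is a bit more direct than the paper's route through $l.ann_R(T/Z)=r.ann_R(T/Z)=Z({}_RR)$, but the content is the same).

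For the prime-ring assertion the two arguments diverge. The paper, working under the standing hypothesis $Z({}_RR)\neq Z$ (so $R+Z=T$), combines $Soc({}_RR)\,Z=0$ with $TZ({}_RR)\subseteq Z$ to obtain $Soc({}_RR)\cdot T\cdot Z({}_RR)=0$ and then declares ``the final part is evident now''; primeness yields only $Soc({}_RR)=0$ or $Z({}_RR)=0$, which is formally weaker than the stated conclusion $Z({}_RT)=0$. Your route---observing that $R+Z=T$ and $SZ=0$ force $ST\subseteq S$, so that $S$ is a nonzero right ideal of the prime ring $T$ with $r.ann_T(S)=0$ and hence $Z=0$---is cleaner and actually closes this gap in the $R+Z=T$ branch.

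Regarding the branch $Z=Z({}_RR)\subseteq R$ that you single out as ``the main obstacle'': your diagnosis is accurate. The paper's proof is carried out entirely under the assumption $Z({}_RR)\neq Z$ and offers no separate argument for this remaining configuration; the line ``the final part is evident now'' does not touch it. So the incompleteness you identify in your proposal is shared by the paper's own proof, and your idealizer/nilpotent-socle analysis already goes further than what the paper records. In short: on the parts the paper actually proves, you match it (and in one place sharpen it); the case you cannot finish is one the paper does not finish either.
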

\begin{proof}
It is clear that $Z:=Z({}_RT)$ is a left $R$-submodule of $T$. Also note that for each $a\in Z$ and $t\in T$, $l.ann_R(a)\subseteq l.ann_R(at)$ and therefore $at\in Z$. Thus $Z$ is a $(R,T)$-subbimodule of $T$. Clearly $1\notin Z$ and therefore $Z$ is proper. It is obvious that $Z({}_RR)\subseteq Z$ and $Z\cap R=Z({}_RR)$. Now assume that $R$ is a maximal subring of $T$ and $Z({}_RR)\neq Z$. Thus by maximality of $R$ we conclude that $R+Z=T$. Since in the latter equality all of the parts are left and right $R$-submodules of $T$, we obtain that $R/Z({}_RR)\cong T/Z$ as left and right $R$-modules. Now since $Z({}_RR)$ is an ideal of $R$, we deduce that $l.ann_R(R/Z({}_RR))=r.ann_R(R/Z({}_RR))=Z({}_RR)$. Hence $l.ann_R(T/Z)=r.ann_R(T/Z)=Z({}_RR)$. This immediately shows that $TZ({}_RR)$ and $Z({}_RR)T$ are contained in $Z$. Since $Soc({}_RR)Z=0$, we infer that $Soc({}_RR)TZ({}_RR)=0$. The final part is evident now.
\end{proof}

Similarly $Z(T_R)$ is a proper $(T,R)$-subbimodule of $T$, which contains $Z(R_R)$. We remind the reader that a ring $T$ is called symmetric if for each natural number $n$ and $a_1,\ldots,a_n\in T$, whenever $a_1a_2\cdots a_n=0$, then for each permutation $\sigma\in S_n$, we have $a_{\sigma(1)}a_{\sigma(2)}\cdots a_{\sigma(n)}=0$. Note that by \cite[Theorem I.3]{andcam}, each reduced ring $T$ is a symmetric ring. Now we have the following main result.

\begin{thm}\label{t12}
Let $R$ be a maximal subring of a reduced ring $T$. Then $Z:=Z({}_R T)=Z(T_R)$ is a proper ideal of $T$. Moreover, either $Z=0$ or the following hold:
\begin{enumerate}
\item $R\oplus Z=T$. In particular, $Z$ is a minimal ideal of $T$. In fact for each $0\neq x\in Z$, we have $RxT=Z=TxR$.
\item $(R:T)=(R:T)_l=(R:T)_r=ann_R(Z)$ which contains $Soc(R)$. Moreover, $Q=ann_T(Z)$ is a minimal prime ideal of $T$ and $Q\cap R=(R:T)$.
\item for each minimal prime ideal $P$ over $Z$ in $T$, $T/P\cong R/(R\cap P)$ is a domain.
\item $T/R$ is a left and right singular $R$-module.
\end{enumerate}
\end{thm}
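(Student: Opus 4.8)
The plan is to reduce essentially everything to Theorem~\ref{t1} by first showing that $Z:=Z({}_RT)$ is in fact a two-sided ideal of $T$ and that the extension splits as $T=R\oplus Z$ whenever $Z\neq 0$. First I would note that $R$, being a subring of the reduced ring $T$, is itself reduced, and deduce that its one-sided singular ideals vanish: if $x\in Z({}_RR)$ then any $rx\in l.ann_R(x)$ satisfies $rx^2=0$, so $rx=0$ by Lemma~\ref{t8}(2); hence $Rx\cap l.ann_R(x)=0$ and essentiality of $l.ann_R(x)$ forces $x=0$, giving $Z({}_RR)=0$ and symmetrically $Z(R_R)=0$. Next I would prove the equality $Z({}_RT)=Z(T_R)$. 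For $x\in T$ the two annihilators coincide in the two-sided ideal $A:=l.ann_R(x)=r.ann_R(x)$ (Lemma~\ref{t8}(1)), and the key observation is that in a reduced ring a two-sided ideal $A$ is left-essential iff $l.ann_R(A)=r.ann_R(A)=0$ iff it is right-essential: if $A$ is left-essential with $Ac=0$, $c\neq 0$, choose $0\neq rc\in Rc\cap A$; then $(rc)c=0$ gives $rc^2=0$, so $rc=0$ by Lemma~\ref{t8}(2), a contradiction, whence $r.ann_R(A)=0$, and conversely $l.ann_R(A)=0$ together with $L\cap A=0$ forces $AL\subseteq A\cap L=0$, i.e. $L\subseteq r.ann_R(A)=0$. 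Applied to $A=ann_R(x)$ this shows $x\in Z({}_RT)\Leftrightarrow x\in Z(T_R)$. Since $Z({}_RT)$ is an $(R,T)$-subbimodule and $Z(T_R)$ a $(T,R)$-subbimodule (Lemma~\ref{t11} and its right analogue), their common value $Z$ satisfies $ZT\subseteq Z$ and $TZ\subseteq Z$, so $Z$ is a proper two-sided ideal of $T$. Lemma~\ref{t11} then gives the dichotomy: either $Z=Z({}_RR)=0$, or $R+Z=T$; and as $Z\cap R=Z({}_RR)=0$ the second case is $T=R\oplus Z$.

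Assuming $Z\neq 0$, so that $T=R\oplus Z$ with $Z$ a nonzero two-sided ideal meeting $R$ trivially, Theorem~\ref{t1} applies with $I=Z$. Because $T$ is reduced, hence semiprime, part~(4) yields $Z^2\neq 0$, that $Q:=ann_T(Z)$ is a minimal prime with $Q\cap R=(R:T)$, and part~(1) that $Z$ is a minimal ideal of $T$. Part~(3) (valid since $Z^2\neq 0$) gives $(R:T)=(R:T)_l=(R:T)_r$, while part~(2) identifies $(R:T)_l=r.ann_R(Z)$ and $(R:T)_r=l.ann_R(Z)$, which reducedness collapses to $ann_R(Z)$; this proves the equalities of~(2), and $Soc(R)\subseteq(R:T)$ is exactly the relation $Soc({}_RR)\,Z=0$ used in Lemma~\ref{t11} (each minimal left ideal meets the essential $l.ann_R(z)$, hence lies in it). For the generation statement in~(1) I would fix $0\neq x\in Z$: since $xT$ is a right ideal of $T$, the set $R+R(xT)=R+RxT$ is a subring of $T$ by the Notations, and it properly contains $R$ since $x\in RxT\subseteq Z$ with $Z\cap R=0$; maximality forces $R+RxT=T=R\oplus Z$, and $RxT\subseteq Z$ then gives $RxT=Z$. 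Symmetrically, using that $Tx$ is a left ideal and $R+(Tx)R$ is a subring, $TxR=Z$.

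The splitting $T=R\oplus Z$ also gives a ring isomorphism $T/Z\cong R$, so $T/Z$ is reduced. For $P\in Min_T(Z)$ we have $Z\subseteq P$ but $Z\not\subseteq R$, hence $P\not\subseteq R$ and maximality yields $R+P=T$, i.e. $T/P\cong R/(R\cap P)$; under $T/Z\cong R$ the ideal $R\cap P$ corresponds to a minimal prime of the reduced ring $R$, whose quotient is a domain by the standard fact that minimal primes of reduced rings are completely prime (\cite{lam}), proving~(3). Part~(4) is then immediate: $T/R\cong Z$ as both a left and a right $R$-module, and $Z=Z({}_RT)=Z(T_R)$ is by definition singular on each side, so $T/R$ is left and right singular.

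I expect the genuine technical heart to be the equality $Z({}_RT)=Z(T_R)$ and the resulting two-sidedness of $Z$, since this is precisely what upgrades a one-sided singular submodule to an ordinary ideal and thereby unlocks Theorem~\ref{t1}; the essential-ideal characterization in a reduced ring is the single step that is not purely formal. Everything downstream — the splitting, the conductor identities, and parts~(3)--(4) — is then bookkeeping over Theorem~\ref{t1}. I note in particular that the one-sided generation $RxT=Z=TxR$, which at first sight seems to demand that $Z$ be a division ring, is in fact settled cleanly by the subring-generation construction of the Notations together with maximality of $R$, so that no analysis of the internal ring structure of $Z$ is needed.
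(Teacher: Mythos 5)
Your proposal is correct and follows essentially the same route as the paper: prove $Z({}_RT)=Z(T_R)$ using the $ab=0\Leftrightarrow ba=0$ property of reduced rings (the paper invokes the symmetric-ring property elementwise, whereas you characterize essential two-sided ideals by vanishing annihilators --- equivalent arguments), conclude $Z$ is a two-sided ideal with $T=R\oplus Z$ when $Z\neq 0$, and then read off (1)--(4) from Theorem \ref{t1} exactly as the paper does. The only slip is cosmetic: in your converse direction you should use $LA\subseteq L\cap A$ rather than $AL$ for a right ideal $L$, which is harmless since left and right annihilators of any subset coincide in a reduced ring.
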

\begin{proof}
Let $x\in Z({}_RT)$, then we prove that $x\in Z(T_R)$. To see this we must show that $ann_R(x)$ is essential as right ideal of $R$ (note that $l.ann_R(x)=r.ann_R(x)$ for $T$ is a symmetric ring). Hence assume that $0\neq a\in R$. Since $ann_R(x)$ is essential as left ideal in $R$, we deduce that there exists $r\in R$, such that $0\neq ra\in ann_R(x)$. Therefore $rax=0$ and $ra\neq 0$. Consequently, $ar\neq 0$, for $T$ is a symmetric ring and therefore $xar=0$. This immediately shows that $ann_R(x)$ is essential as right ideal. Hence $x\in Z(T_R)$. Similarly, $Z(T_R)\subseteq Z({}_RT)$ and hence $Z(T_R)=Z({}_RT)$ is a proper ideal of $T$, by the previous lemma. Now assume that $Z\neq 0$. Therefore $R\cap Z=Z(R)=0$ (for $R$ is reduced) and thus $R\oplus Z=T$, by the maximality of $R$. Thus $Z$ is a minimal ideal of $T$ by $(1)$ Theorem \ref{t1}. Now note that for each $0\neq x\in Z$, $RxT$ and $TxR$ are nonzero $(R,T)$-subbimodule and $(T,R)$-subbimodule of $Z$, respectively. Thus by the maximality of $R$, we conclude that $R+RxT=T=R+TxR$. From these equalities we immediately deduce that $Z=RxT=TxR$. Therefore $(1)$ holds. Since $T$ is reduced, we infer that $Z^2\neq 0$. Thus by $(2)$ and $(3)$ of Theorem \ref{t1}, we conclude that $(R:T)=(R:T)_l=(R:T)_r=ann_R(Z)$. Clearly, $Soc(R)Z=ZSoc(R)=0$, hence $Soc(R)\subseteq (R:T)$. By $(4)$ of Theorem \ref{t1}, we deduce that $Q=ann_T(Z)$ is a minimal prime ideal of $T$ and $Q\cap R=(R:T)$. Therefore $(2)$ is proved. For $(3)$, note that $T/Z\cong R$ as ring, and therefore $T/Z$ is a reduced ring. Thus by \cite[Lemma 12.6]{lam}, we conclude that each minimal prime ideal of $T/Z$, is a completely prime ideal, i.e., each minimal prime ideal $P$ over $Z$ in $T$ is a completely prime ideal (i.e., $T/P$ is a domain). Clearly, $P\nsubseteq R$, for $Z\nsubseteq R$. Thus $R+P=T$, by the maximality of $R$. Thus we conclude $T/P\cong R/(R\cap P)$ as rings, therefore the final part of $(3)$ is proved. Finally for $(4)$, from $R\oplus Z=T$, we deduce that $T/R\cong Z$ as left and right $R$-modules (note that all of the part of the equality are left and right $R$-submodules of $T$). Since $Z$ is left and right singular $R$-module, we deduce that $T/R$ is left and right singular $R$-module.
\end{proof}

In the following example we give an example of a maximal subring $R$ of a reduced ring $T$ with $Z({}_RT)\neq 0$.

\begin{exm}\label{t13}
Let $T=\mathbb{Z}\times\mathbb{Z}_2$ and $R=\{(n,\bar{n})\ |\ n\in\mathbb{Z}\}\cong \mathbb{Z}$ be the prime subring of $T$. It is clear that $R$ is a maximal subring of $T$ and $T$ is a reduced ring. One can easily see that $ann_R((0,\bar{1}))=2R$ which is essential in $R$. Hence $Z_R(T)\neq 0$.
\end{exm}

In what following we want to study Jacobson radicals, upper nilradicals and lower nilradicals of the ring extension $R\subseteq T$ where $R$ is a maximal subring of $T$. Specially when $T=R\oplus I$, where $I$ is a nonzero ideal of $T$. First we need the following fact.

\begin{prop}\label{t17}
Let $R$ be a maximal subring of a ring $T$. Then at least one of the following holds:
\begin{enumerate}
\item $(R:T)$ is the unique maximal ideal of $T$.
\item $J(T_R)$ and $J({}_RT)$ are proper submodules of $T$ and for each left (resp. right) $R$-submodule $A$ of $T$, either $A\subseteq J({}_RT)$ (resp. $J(T_R)$) or $A$ has a maximal left (resp. right) $R$-submodule.
\end{enumerate}
\end{prop}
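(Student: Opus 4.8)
The plan is to prove the contrapositive: assuming $(1)$ fails, I will produce a maximal left $R$-submodule and a maximal right $R$-submodule of $T$, which makes both $J({}_RT)$ and $J(T_R)$ proper, and then the stated dichotomy follows by the usual module-theoretic argument (exactly as in Lemma \ref{t3}).

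First I would observe that if $(R:T)$ is not the unique maximal ideal of $T$, then, since $T\neq 0$ has maximal ideals, there is a maximal ideal $M$ of $T$ with $M\neq (R:T)$. The first key step is to show $M\nsubseteq R$: if $M\subseteq R$, then $M$ is an ideal of $T$ contained in $R$, hence $M\subseteq (R:T)$; since $(R:T)\subsetneq T$ (as $R\neq T$), maximality of $M$ would force $M=(R:T)$, a contradiction. Once $M\nsubseteq R$, maximality of the subring $R$ gives $R+M=T$ (note $R+M$ is a subring of $T$ properly containing $R$).

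The heart of the argument is then to manufacture a maximal $R$-submodule of $T$ from $M$. Here the delicate point---what I expect to be the main obstacle---is that a maximal left ideal of $T$, or a maximal left ideal of the simple ring $T/M$, need not be a maximal left $R$-submodule, because simplicity as a left $T$-module need not survive restriction to $R$. The way around this is to exploit $R+M=T$: it forces $T/M$ to be a \emph{cyclic} left $R$-module generated by $1+M$ (indeed $R\cdot(1+M)=(R+M)/M=T/M$), and it is nonzero, so it has a maximal submodule, since every nonzero finitely generated module over the unital ring $R$ does. Pulling such a maximal left $R$-submodule $\overline N\subsetneq T/M$ back along $T\to T/M$ yields, via the correspondence of left $R$-submodules containing $M$, a maximal left $R$-submodule $N$ of $T$. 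Hence $J({}_RT)\subseteq N\subsetneq T$ is proper; running the identical argument with $(1+M)\cdot R=T/M$ shows $J(T_R)$ is proper.

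Finally, for the ``hence'' clause I would argue exactly as in Lemma \ref{t3}. Given a left $R$-submodule $A$ of $T$ with $A\nsubseteq J({}_RT)$, since $J({}_RT)$ is the intersection of all maximal left $R$-submodules of $T$ there is a maximal left $R$-submodule $N$ with $A\nsubseteq N$; then $A+N=T$ and $A/(A\cap N)\cong T/N$ is a simple left $R$-module, so $A\cap N$ is a maximal left $R$-submodule of $A$, and symmetrically on the right. This establishes $(2)$, completing the contrapositive.
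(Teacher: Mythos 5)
Your proposal is correct and follows essentially the same route as the paper: locate a maximal ideal $M$ of $T$ not contained in $R$, use maximality of the subring $R$ to get $R+M=T$, deduce that $T/M$ is a nonzero cyclic (equivalently, isomorphic to $R/(R\cap M)$) left and right $R$-module and hence has maximal $R$-submodules, pull back to make $J({}_RT)$ and $J(T_R)$ proper, and finish the dichotomy with the same argument as Lemma \ref{t3}. The only cosmetic difference is that you phrase it as a contrapositive and make explicit the (correct) observation that a maximal ideal of $T$ contained in $R$ must equal $(R:T)$, which the paper leaves to the converse direction of its argument.
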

\begin{proof}
Assume that $M$ is a maximal ideal of $T$ which is not contained in $R$, then by maximality of $R$ we obtain that $R+M=T$. Therefore $R/(R\cap M)\cong T/M$ as left and right $R$-modules. This immediately implies that $T/M$ has a maximal left (resp. right) $R$-submodule and thus $T$ has a maximal left (resp. right) $R$-submodule. Thus $J({}_RT)$ and $J(T_R)$ are proper submodules of $T$. Therefore, if $J({}_RT)=T$ (or $J(T_R)=T$), we conclude that each maximal ideal of $T$ is contained in $R$ and hence is equal to $(R:T)$. Thus $(R:T)$ is the unique maximal ideal of $T$. For the final part of $(2)$, if $A$ is a left $R$-submodule of $T$ which is not contained in $J({}_RT)$, then there exists a maximal left $R$-submodule $N$ of $T$ such that $A$ is not contained in $N$. Therefore $A+N=T$ and thus $A/(A\cap N)\cong T/N$ as left $R$-modules. Hence $A\cap N$ is a maximal submodule of $A$ and we are done.
\end{proof}

Now we have the following result for Jacobson radicals.

\begin{thm}\label{t18}
Let $R$ be a maximal subring of a ring $T$ and $T=R\oplus I$, where $I$ is an ideal of $T$. Then either $J(T)\cap R=J(R)$ or the following hold:
\begin{enumerate}
\item $I^2=I$.
\item $J(T)\subseteq (R:T)=(R:T)_l=(R:T)_r$.
\item $TJ(R)=J(R)T\nsubseteq R$; in particular, $(R:T)$ is not a left/right primitive ideal of $R$ and hence $dim(R)\geq 1$. Moreover, $IJ(R)=J(R)I=I$ and ${}_RI$, $I_R$ has no maximal submodules.
\item  $J({}_RT)$ and $J(T_R)$ are proper submodules of $T$ and contain $I$.
\item $(R:T)=l.ann_T(I)$ is a primitive and a minimal prime ideal of $T$.
\item $(R:T)$ is not a maximal ideal of $T$. Moreover, $(R:T)\oplus I$ is a prime ideal of $T$ which is not maximal. In particular, $dim(T)\geq 2$.
\item $I$ is contained in each maximal ideal of $T$.
\item $I$ is unique, i.e., if $I_1$ is a nonzero proper ideal of $T$ such that $R\cap I_1=0$, then $I_1=I$.
\end{enumerate}
\end{thm}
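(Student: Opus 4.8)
The plan is to assume $J(T)\cap R\neq J(R)$ and derive (1)--(8); the case $J(T)\cap R=J(R)$ is the first alternative. The key preliminary observation, which I would record first, is that in this situation one always has $J(T)\cap R\subseteq J(R)$: the projection $\pi\colon T\to T/I$ is a ring surjection with $\pi(J(T))\subseteq J(T/I)$, while $\pi|_R\colon R\to T/I$ is a ring isomorphism carrying $J(R)$ onto $J(T/I)$, so any $x\in J(T)\cap R$ satisfies $\pi(x)\in J(T/I)$ and hence $x\in J(R)$. Thus the standing assumption becomes $J(T)\cap R\subsetneq J(R)$. Since $I$ is a minimal ideal of $T$ by Theorem \ref{t1}(1), either $I^2=0$ or $I^2=I$. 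If $I^2=0$ then $I\subseteq J(T)$, so $J(T)=(J(T)\cap R)\oplus I$ by modularity and $\pi(J(T))=J(T/I)$ forces $J(T)\cap R=J(R)$, a contradiction; this proves (1).

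Next I would prove (2). By Theorem \ref{t1}(3) the equality $(R:T)=(R:T)_l=(R:T)_r$ is immediate from $I^2\neq 0$, and since $(R:T)$ is the largest common ideal it suffices to show $J(T)\subseteq R$. If $J(T)\nsubseteq R$ then $R+J(T)=T$ by maximality, whence $R/(R\cap J(T))\cong T/J(T)$ is semiprimitive; because $R\cap J(T)\subseteq J(R)$ we get $J(R)/(R\cap J(T))=J(R/(R\cap J(T)))=0$, i.e. $J(R)=J(T)\cap R$, contradicting the assumption. Hence $J(T)\subseteq(R:T)$. From $(R:T)=ann_R(I)$ this gives $J(T)I=IJ(T)=0$, so in particular $I\nsubseteq J(T)$. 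For (3) I would apply Theorem \ref{t1}(7) to the ideal $J(R)$: either $J(R)\subseteq(R:T)$, in which case $J(R)$ is a common ideal whose elements are quasi-regular in $T$ (their quasi-inverses lie in $R$), forcing $J(R)\subseteq J(T)$ and hence $J(R)=J(T)\cap R$, a contradiction; or $IJ(R)=J(R)I=I$ and $TJ(R)=J(R)T$. Only the second can hold, and $TJ(R)\nsubseteq R$ (otherwise $J(R)\subseteq(R:T)$). If $(R:T)$ were left or right primitive, or maximal, then $J(R)\subseteq(R:T)$ again; so $(R:T)$ is neither, and being prime (Theorem \ref{pt3}) but not maximal it lies under a maximal ideal, giving $dim(R)\geq 1$. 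Finally $J(R)I=I$ and $IJ(R)=I$ show that ${}_RI$ and $I_R$ have no maximal submodules, since the radical must annihilate any simple quotient.

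For (4) and (5) the engine is the annihilator $l.ann_T(I)$. Because $I\nsubseteq J(T)$, Lemma \ref{t3} gives a maximal left $T$-submodule of $I$, so Lemma \ref{t4}(1) shows $l.ann_T(I)$ is left primitive. I would then show $l.ann_T(I)=(R:T)$: clearly $(R:T)\subseteq l.ann_T(I)$, and if $l.ann_T(I)\nsubseteq R$ then $R+l.ann_T(I)=T$ and $T/l.ann_T(I)\cong R/(R:T)$ would be primitive, making $(R:T)$ primitive in $R$ against (3); hence $l.ann_T(I)\subseteq R$ and equality follows. This is (5), and minimality of $(R:T)$ as a prime of $T$ follows because any prime $Q\subseteq(R:T)$ satisfies $(R:T)I=0\subseteq Q$ with $I\nsubseteq Q$, forcing $(R:T)\subseteq Q$. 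For (4), every maximal left $R$-submodule of $T$ must contain $I$ (otherwise it would meet $I$ in a maximal $R$-submodule of $I$, which does not exist), so $I\subseteq J({}_RT)\cap J(T_R)$; and since $(R:T)\oplus I$ is a proper ideal of $T$ strictly containing $(R:T)$, the ideal $(R:T)$ is not maximal in $T$, so Proposition \ref{t17} rules out its first alternative and forces $J({}_RT)$ and $J(T_R)$ to be proper.

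Items (6)--(8) are then short. The ideal $(R:T)\oplus I$ is prime since $T/((R:T)\oplus I)\cong R/(R:T)$ is prime, and not maximal since $R/(R:T)$ is not simple; together with $(R:T)\subsetneq(R:T)\oplus I\subsetneq M$ for a maximal ideal $M$ this yields $dim(T)\geq 2$, proving (6). For (7), if a maximal ideal $M$ did not contain $I$ then $I+M=T$ and $I\cap M=0$ by minimality, so $I\cong T/M$ is a simple left $T$-module with $l.ann_T(I)=M$; but $l.ann_T(I)=(R:T)$ is not maximal, a contradiction, so $I\subseteq M$. For (8), any nonzero proper ideal $I_1$ with $R\cap I_1=0$ is a minimal ideal, and if $I_1\neq I$ then $I\cap I_1=0$, so $I_1I\subseteq I\cap I_1=0$ gives $I_1\subseteq l.ann_T(I)=(R:T)\subseteq R$, contradicting $R\cap I_1=0$. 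I expect the main obstacle to be item (2): the whole dichotomy hinges on the clean containment $J(T)\cap R\subseteq J(R)$ and on converting $J(T)\nsubseteq R$ into the semiprimitivity of $T/J(T)\cong R/(R\cap J(T))$. Once $J(T)\subseteq(R:T)$ is in hand, Theorem \ref{t1}(7) together with the primitivity criteria of Lemmas \ref{t3}--\ref{t4} make the remaining items essentially bookkeeping.
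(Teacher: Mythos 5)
Your proposal is correct, and while it follows the same overall dichotomy as the paper (assume $J(T)\cap R\neq J(R)$, lean on Theorem \ref{t1}), several of your key steps take a genuinely different and often cleaner route. You obtain $J(T)\cap R\subseteq J(R)$ from the retraction $\pi|_R\colon R\to T/I$ rather than from the paper's identity $U(R)=U(T)\cap R$; you derive $I^2=I$ directly from the observation that $I^2=0$ forces $I\subseteq J(T)$ and hence $J(T)\cap R=J(R)$, whereas the paper first manufactures a left primitive ideal $Q_1$ of $T$ inside $R$ and deduces $I\nsubseteq Q_1$; you get $J(T)\subseteq R$ from the semiprimitivity of $T/J(T)\cong R/(R\cap J(T))$ instead of from those primitive ideals; you obtain the left primitivity of $(R:T)=l.ann_T(I)$ from Lemmas \ref{t3} and \ref{t4}(1) (which the paper states but does not actually use here), where the paper instead identifies $l.ann_T(I)$ with the primitive ideal $Q_1$ it already has; and your proof that $(R:T)$ is not maximal in $T$ (via the chain $(R:T)\subsetneq(R:T)\oplus I\subsetneq T$) is substantially simpler than the paper's explicit product construction. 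What your approach buys is economy and a uniform use of the stated lemmas; what the paper's buys is that the primitive ideals $Q_1,Q_2\subseteq R$ come for free with both chiralities at once.

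Two small repairs are needed. First, in item $(7)$ you assert that $I\cong T/M$ is a \emph{simple left $T$-module}; it is only a simple ring, and $T/M$ need not be simple as a $T$-module. The conclusion you actually need, $l.ann_T(I)=M$, follows directly from $MI\subseteq M\cap I=0$ together with $l.ann_T(I)\neq T$ and the maximality of $M$, so the contradiction with $(5)$ and $(6)$ stands; just drop the simple-module claim. Second, item $(5)$ asserts $(R:T)$ is \emph{primitive}, which in the paper means left and right primitive; Lemma \ref{t4}(1) only gives left primitivity. You should add that $l.ann_T(I)=r.ann_T(I)$ (since $l.ann_T(I)\cap I=0$ by minimality of $I$ and $I^2\neq 0$, whence each annihilator is contained in the other) and invoke the right-handed analogues of Lemmas \ref{t3} and \ref{t4}, which hold by symmetry. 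With these adjustments the argument is complete.
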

\begin{proof}
First we prove that $U(R)=U(T)\cap R$ and $J(T)\cap R\subseteq J(R)$. Assume that $x\in U(T)\cap R$, thus $Tx=T=xT$ and therefore by $(6)$ of Theorem \ref{t1}, we deduce that $Rx=Tx\cap R=T\cap R=R$ and similarly $xR=R$, which immediately implies that $x\in U(R)$. Thus $U(R)=U(T)\cap R$. Now suppose that $x\in J(T)\cap R$ and $y\in R$. Thus $1-xy\in U(T)\cap R=U(R)$ and therefore $x\in J(R)$. Now, if for each left primitive ideal $Q$ of $T$, we have $Q\nsubseteq R$, then by maximality of $R$, we deduce that $R+Q=T$ and therefore $R/(R\cap Q)\cong T/Q$ as rings. Hence $R\cap Q$ is a left primitive ideal of $R$ and therefore $J(R)\subseteq R\cap Q$. Hence $J(R)\subseteq J(T)\cap R$, and thus $J(T)\cap R=J(R)$. Thus we may assume that there exist a left primitive ideal $Q_1$ and (similarly) a right primitive ideal $Q_2$ of $T$ such that $Q_1$ and $Q_2$ are contained in $R$ and therefore $Q_1,Q_2\subseteq (R:T)$. In particular $J(T)\subseteq R$ and therefore $J(T)\subseteq J(R)$. Since for each $i$, $i=1,2$, $Q_i$ is a prime ideal of $T$, we deduce that $I^2\neq 0$, for otherwise $I\subseteq Q_i\subseteq R$ which is absurd. By $(1)$ of Theorem \ref{t1}, $I$ is minimal and therefore we deduce that $I^2=I$. Thus by $(3)$ of Theorem \ref{t1}, we conclude that $P:=(R:T)=(R:T)_l=(R:T)_r$. Hence $J(T)\subseteq Q_i\subseteq P$. Now if $J(R)\subseteq P$ (and therefore $J(R)T$ and $TJ(R)$ are contained in $R$), then since $T=R\oplus I$, we deduce that $T/R\cong I$ as left and right $R$-module and therefore $J(R)I=0=IJ(R)$. Now assume that $x\in J(R)$, we claim that $x\in J(T)$. To see this, let $t=r+i\in T=R\oplus I$, where $r\in R$ and $i\in I$, then $1-tx=1-(r+i)x=1-rx\in U(R)\subseteq U(T)$, for $ix\in IJ(R)=0$. Therefore $x\in J(T)$ and thus $J(R)=J(T)$ and hence we are done. Thus we may assume that $J(R)\nsubseteq P$ (and therefore $TJ(R)$ and $J(R)T$ are note contained in $R$; also note that this immediately implies that $P$ is not a one-sided primitive ideal of $R$ and hence is not a maximal ideal of $R$ too, thus $dim(R)\geq 1$), hence $J(R)I$ and $IJ(R)$ are nonzero. Since $R$ is a maximal subring of $T$, we conclude that $R+J(R)I=T=R+IJ(R)$ and since $IJ(R), J(R)I\subseteq I$, $I\cap R=0$ and $R\oplus I=T$, we deduce that $I=IJ(R)=J(R)I$. Thus by $(7)$ of Theorem \ref{t1}, we have $TJ(R)=J(R)T$. The final part of $(3)$ is an immediate consequence of Nakayama's Lemma. Clearly, since $T/I\cong R$ as left and right $R$-modules, we conclude that ${}_RT$ and $T_R$ have maximal $R$-submodules and therefore the first part of $(4)$ holds. Now if $K$ is a maximal left $R$-submodule of $T$ which does not contain $I$, then $K+I=T$. Therefore $I/(I\cap K)\cong T/K$, as left $R$-module, which means $I\cap K$ is a maximal left $R$-submodule of $I$ which is a contradiction by the final part of $(3)$. Thus $I\subseteq J({}_RT)$ and similarly $J(T_R)$ contains $I$. For $(5)$, let $A=l.ann_T(I)$, then by minimality of $I$, if $A\cap I=I$, then $I^2=0$ which is absurd by $(1)$. Thus we conclude that $A\cap I=0$ and therefore $IA=0$. Hence $A\subseteq r.ann_T(I)$. Similarly, if $B=r.ann_T(I)$, we have $BI=0$ and therefore $B\subseteq A$. Hence $A=B$, i.e., $l.ann_T(I)=r.ann_T(I)$. Now note that $AI=0\subseteq Q_i$ and since $Q_i$ does not contain $I$, we deduce that $A\subseteq Q_i\subseteq (R:T)$. Hence $A=Q_i=P$ (note that $A=l.ann_R(I)=(R:T)_r=(R:T)=(R:T)_l=r.ann_R(I)$). Hence $A=Q_1=Q_2=P$ is a primitive ideal of $T$ which is also a prime ideal of $R$. If $Q$ is a minimal ideal of $T$, which properly contained in $A$, then $AI=0\subseteq Q$ implies that $I\subseteq Q\subseteq A$ which is impossible. Thus $A$ is a minimal prime ideal of $T$. For $(6)$, if $P=(R:T)=A$ is a maximal ideal of $T$, then $I+P=T$. Clearly $I\cap P=0$, and therefore we have the ring isomorphism $\alpha: T\longrightarrow T/I\times T/P$ which is defined by $\alpha(t)=(t+I,t+P)$. Since $T/I\cong R$ as ring, let $\beta: T/I\times T/P\longrightarrow R\times T/P$ be the ring isomorphism defined by $\beta(t+I,s+P)=(r,s+P)$, where $t=r+i$, $r\in R$ and $i\in I$. Then clearly $f=\beta\alpha$ is a ring isomorphism and $f(R)=\{(r, r+P)\ |\ r\in R\}$ is a maximal subring of $Im(f)=R\times T/P$, for $R$ is a maximal subring of $T$. But it is obvious that $R\times R/P$ is a proper subring of $T$ (note $R/P$ is a maximal subring of $T/P$) which properly contains $f(R)$ (note $(0, 1+P)\in R\times R/P$ but not in $f(R)$). Thus $P$ is not a maximal ideal of $T$. The final part of $(6)$ is evident by the second part of $(3)$ and Remark \ref{t6}, in particular $dim(T)\geq 2$. Now for $(7)$, by $(6)$ note that for each maximal ideal $M$ of $T$ we infer that $R$ does not contain $M$ and therefore by maximality of $R$, $R+M=T$. Thus $R/(R\cap M)\cong T/M$ as left (and right) $R$-modules. In particular, $T/M$ has a maximal left $R$-submodule. Now if $I$ is not contained in $M$, then $I\cap M=0$ by minimality of $I$ and $I+M=T$ by maximality of $M$. Therefore, $T/M\cong I$ as left (and right) $R$-modules. Hence $I\cong R/(R\cap M)$ as left (and right) $R$-modules. Thus $I$ has a maximal left (and right) $R$-submodules which is impossible by $(3)$. Hence $I$ is contained in each maximal ideal $M$ of $T$ and therefore $(7)$ is true. Finally for $(8)$, assume that $I_1\neq I$ be a proper nonzero ideal of $T$ such that $R\cap I_1=0$. Therefore $R\oplus I_1=T$ and hence $I_1$ is a minimal ideal of $T$, by $(1)$ of Theorem \ref{t1}. Since $I\neq I_1$ are minimal ideals of $T$, we conclude that $I\cap I_1=0$. Therefore $II_1=I_1I=0$. Hence $I_1\subseteq A\subseteq R$, which is absurd. Thus $(8)$ holds.
\end{proof}

Similar to the previous result we have the following fact for upper nilradical.

\begin{thm}\label{t20}
Let $R$ be a maximal subring of a ring $T$. Then one of the following condition holds:
\begin{itemize}
\item[$(i)$] There exists a strongly prime ideal $Q$ of $T$ such that $Q\subseteq R$. In particular, $Nil^*(T)\subseteq Q\subseteq (R:T)$.
\item[$(ii)$] $Nil^*(T)\cap R=Nil^*(R)$. In particular, either $Nil^*(T)=Nil^*(R)$ or $R/Nil^*(R)\cong T/Nil^*(T)$, as rings.
\end{itemize}
Moreover, if there exists a nonzero ideal $I$ of $T$ such that $R\cap I=0$, then either $Nil^*(T)\cap R=Nil^*(R)$, or the following hold:
\begin{enumerate}
\item $I^2=I$.
\item $(R:T)=(R:T)_l=(R:T)_r=ann_T(I)$ is a strongly prime ideal of $T$ which is a minimal prime ideal of $T$.
\item $Nil^*(R)\nsubseteq (R:T)$. In particular, $IN=NI$ and $TN=NT$, where $N=Nil^*(R)$.
\item $(R:T)\oplus I$ is a prime ideal of $T$ which is not a strongly prime ideal of $T$. In particular, $(R:T)\oplus I$ is not a maximal ideal of $T$ and $dim(T)\geq 2$.
\item $I$ is unique, i.e., if $I_1$ is a nonzero ideal of $T$ such that $R\cap I_1=0$, then $I_1=I$.
\end{enumerate}
\end{thm}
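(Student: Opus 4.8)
The plan is to run the whole argument through the characterisation recalled in Remark~\ref{t19}, namely that $Nil^*(S)$ is both the intersection of the strongly prime ideals of $S$ and the largest nil ideal of $S$. First I would settle one inclusion once and for all: $Nil^*(T)\cap R$ is an ideal of $R$ (the intersection of an ideal of $T$ with the subring $R$) and it is nil, so $Nil^*(T)\cap R\subseteq Nil^*(R)$ always. For the reverse inclusion I would inspect the strongly prime ideals $Q$ of $T$. If some such $Q$ lies inside $R$ we are in case $(i)$, and then $Nil^*(T)\subseteq Q\subseteq(R:T)$ is immediate since $Q$ is one of the ideals being intersected and $Q$ is a common ideal of $R$ and $T$. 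Otherwise every strongly prime $Q$ satisfies $Q\nsubseteq R$, so $R+Q=T$ by maximality and $R/(R\cap Q)\cong T/Q$ is strongly prime; thus $R\cap Q$ is a strongly prime ideal of $R$ and $Nil^*(R)\subseteq R\cap Q$. Intersecting over all such $Q$ gives $Nil^*(R)\subseteq R\cap Nil^*(T)$, so $(ii)$ holds. The trailing claim of $(ii)$ is the familiar split: if $Nil^*(T)\subseteq R$ then $Nil^*(T)=Nil^*(T)\cap R=Nil^*(R)$, and otherwise $R+Nil^*(T)=T$ yields $R/Nil^*(R)\cong T/Nil^*(T)$.

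For the ``moreover'' part I would assume $T=R\oplus I$ and $Nil^*(T)\cap R\ne Nil^*(R)$; then $(ii)$ fails, so by the dichotomy just proved we are forced into $(i)$: there is a strongly prime (hence prime) ideal $Q$ with $Nil^*(T)\subseteq Q\subseteq(R:T)\subseteq R$. Since $I\ne0$ and $I\cap R=0$ we have $I\nsubseteq Q$, and primeness of $Q$ rules out $I^2=0$ (which would force $I\subseteq Q\subseteq R$); as $I$ is a minimal ideal by Theorem~\ref{t1}(1), this gives $I^2=I$, proving $(1)$, and Theorem~\ref{t1}(3) then gives $(R:T)=(R:T)_l=(R:T)_r$. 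For $(2)$ I would first show $l.ann_T(I)=r.ann_T(I)=ann_T(I)$ by the minimality argument of Theorem~\ref{t18}(5) (if the left annihilator met $I$ it would contain $I$ and force $I^2=0$), then note that $ann_T(I)\cdot I=0\subseteq Q$ with $I\nsubseteq Q$ forces $ann_T(I)\subseteq Q\subseteq(R:T)$; since conversely $ann_T(I)\cap R=ann_R(I)=(R:T)$, I obtain $ann_T(I)=(R:T)=Q$, so $(R:T)$ is strongly prime, with minimality as a prime following from the usual annihilator computation.

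The crux is $(3)$, and I expect it to be the main obstacle, since this is exactly the point where the nilpotence characterisation must replace the unit/Nakayama reasoning used for the Jacobson radical in Theorem~\ref{t18}. I would prove $N:=Nil^*(R)\nsubseteq(R:T)$ by contradiction: if $N\subseteq(R:T)=ann_R(I)$, then $NI=IN=0$, so expanding a general element of $T=R\oplus I$ and using $N\subseteq R$ together with $NI=IN=0$ gives $TNT=RNR=N$; hence $N$ is a two-sided ideal of $T$, and being nil it satisfies $N\subseteq Nil^*(T)$. Combined with $N\subseteq R$ and the inclusion $Nil^*(T)\cap R\subseteq Nil^*(R)=N$ from the first paragraph, this forces $Nil^*(T)\cap R=Nil^*(R)$, contradicting the standing assumption. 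With $N\nsubseteq(R:T)$ in hand, Theorem~\ref{t1}(7) immediately delivers $IN=NI=I$ and $TN=NT$.

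Finally, for $(4)$ I would use that $(R:T)$ is a prime ideal of $R$ (Theorem~\ref{pt3}, since it equals $(R:T)_l$), so Remark~\ref{t6}(1) makes $(R:T)\oplus I$ a prime ideal of $T$; it is not strongly prime because $T/((R:T)\oplus I)\cong R/(R:T)$ carries the nonzero nil ideal $(N+(R:T))/(R:T)$, which is nonzero precisely by $(3)$, and it is not maximal since a simple unital quotient would itself be strongly prime, whence the chain $(R:T)\subsetneq(R:T)\oplus I\subsetneq M$ for a maximal $M$ lying above it gives $dim(T)\ge2$. For $(5)$ the uniqueness of $I$ is the annihilator argument of Theorem~\ref{t18}(8): a second such $I_1$ would be a minimal ideal with $I\cap I_1=0$, hence $I_1I=II_1=0$ and $I_1\subseteq ann_T(I)=(R:T)\subseteq R$, contradicting $I_1\cap R=0$.
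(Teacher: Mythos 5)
Your proof is correct and follows essentially the same route as the paper: the first dichotomy is argued exactly as in the text (contract the strongly prime ideals of $T$ when none lies in $R$, and use that $Nil^*(T)\cap R$ is a nil ideal of $R$ for the reverse inclusion), and for the ``moreover'' part the paper simply defers to the proof of Theorem~\ref{t18}, which is precisely the blueprint you follow. Your replacement of the unit/Nakayama step by the observation that $N\subseteq(R:T)$ would make $N$ a nil ideal of $T$, hence contained in $Nil^*(T)$, is exactly the adaptation the author intends, so you have in effect supplied the details left to the reader.
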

\begin{proof}
If there exists a strongly prime ideal $Q$ of $T$ such that $Q\subseteq R$, then $(i)$ holds. Hence assume that for each strongly prime ideal $Q$ of $T$ we have $Q\nsubseteq R$ and therefore by maximality of $R$, we conclude that $R+Q=T$. Hence $R/(R\cap Q)\cong T/Q$ as rings, which implies that $R\cap Q$ is a strongly prime ideal of $R$. Therefore if $\{Q_\alpha\}_{\alpha\in\Gamma}$ is the set of all strongly prime ideals of $T$, we conclude that $Nil^*(R)\subseteq \bigcap_{\alpha\in\Gamma} (R\cap Q_{\alpha})=Nil^*(T)\cap R$.
Now note that, if $x\in Nil^*(T)\cap R$, then $TxT$ is a nil ideal of $T$ and since $RxR\subseteq TxT$, we immediately deduce that $RxR$ is a nil ideal of $R$ and hence $Nil^*(T)\cap R\subseteq Nil^*(R)$. Therefore $Nil^*(T)\cap R=Nil^*(R)$. Hence the first part of $(ii)$ holds. Now if $Nil^*(T)\subseteq R$, then $Nil^*(T)=Nil^*(R)$ and if $Nil^*(T)\nsubseteq R$, then by maximality of $R$ we conclude that $R+Nil^*(T)=T$. Therefore $R/Nil^*(R)=R/(R\cap Nil^*(T))\cong T/Nil^*(T)$, as rings. The proofs of the case $T=R\oplus I$, is very similar to the proof of the previous theorem and hence remain to the reader.
\end{proof}

Next we have the following main result for lower nilradical, fortunately as we see in the next result, the equality $Nil_*(R)=Nil_*(T)\cap R$ holds for a maximal subring $R$ of a ring $T$ with assumption $T=R\oplus I$, for an ideal $I$ of $T$.

\begin{thm}\label{t21}
Let $R$ be a maximal subring of a ring $T$. Then at least one of the following holds:
\begin{enumerate}
\item There exists a (minimal) prime ideal $Q$ of $T$ such that $Q\subseteq (R:T)$. In particular, $Nil_*(T)\subseteq Q\subseteq (R:T)$.
\item $Nil_*(R)\subseteq Nil_*(T)\cap R\subseteq Nil^*(R)$.
\end{enumerate}
In particular, if there exists a nonzero ideal $I$ of $T$ such that $R\cap I=0$, then $Nil_*(R)=Nil_*(T)\cap R$.
\end{thm}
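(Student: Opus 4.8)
The plan is to prove the dichotomy first and then extract the final ``in particular'' clause from the structure theory of Theorem \ref{t1}. Throughout I would use two bookkeeping facts. For a prime ideal $Q$ of $T$ one has $Q\subseteq (R:T)$ if and only if $Q\subseteq R$, since $(R:T)$ is by definition the largest ideal of $T$ lying inside $R$; and minimal primes are cofinal below every prime, so the existence of a prime of $T$ contained in $(R:T)$ is equivalent to the existence of a \emph{minimal} one. This is why the parenthetical ``(minimal)'' in $(1)$ costs nothing.

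For the dichotomy I would assume $(1)$ fails, that is, no prime of $T$ is contained in $R$, and deduce $(2)$. Given any prime $Q$ of $T$ we then have $Q\nsubseteq R$, so maximality of $R$ forces $R+Q=T$ and hence $R/(R\cap Q)\cong T/Q$ as rings; since $T/Q$ is prime, $R\cap Q$ is a prime ideal of $R$. Intersecting over all primes $Q$ of $T$ gives $Nil_*(R)\subseteq\bigcap_Q(R\cap Q)=\big(\bigcap_Q Q\big)\cap R=Nil_*(T)\cap R$. The companion inclusion $Nil_*(T)\cap R\subseteq Nil^*(R)$ is in fact unconditional: $Nil_*(T)$ is a nil ideal of $T$, so $Nil_*(T)\cap R$ is a nil ideal of $R$ and is therefore contained in the largest nil ideal $Nil^*(R)$ (Remark \ref{t19}). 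This gives $(2)$, and the displayed containments in $(1)$ are immediate, since $Nil_*(T)$ lies in every prime of $T$, in particular in the $Q$ provided.

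For the final assertion I would exploit the splitting $T=R\oplus I$ with $I$ minimal (Theorem \ref{t1}$(1)$). Because $R\cong T/I$, the primes of $T$ containing $I$ are exactly the ideals $P\oplus I$ with $P\in Spec(R)$ (Remark \ref{t6}), each satisfying $(P\oplus I)\cap R=P$; hence $\bigcap_{Q\supseteq I}Q=Nil_*(R)\oplus I$. Writing $\mathcal N=\bigcap\{Q\in Spec(T):I\nsubseteq Q\}$, with the convention $\mathcal N=T$ when this family is empty, I get $Nil_*(T)=(Nil_*(R)\oplus I)\cap\mathcal N$, and intersecting with $R$ yields $Nil_*(T)\cap R=Nil_*(R)\cap\mathcal N\subseteq Nil_*(R)$ for free. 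So the only thing left is the inclusion $Nil_*(R)\subseteq\mathcal N$, i.e. $Nil_*(R)\subseteq Q$ for every prime $Q$ of $T$ with $I\nsubseteq Q$.

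I expect this last step to be the crux. For such a $Q$, minimality of $I$ gives $I\cap Q=0$, and necessarily $I^2=I$ (were $I^2=0$ then $I$ would lie in every prime, contradicting $I\nsubseteq Q$). If $Q\nsubseteq R$ then $R\cap Q$ is already prime in $R$ by the argument above, so $Nil_*(R)\subseteq R\cap Q\subseteq Q$. If instead $Q\subseteq R$, I would pass to the prime ring $T/Q$: since $Q\subseteq R$ is an ideal of $T$, the subring $R/Q$ is maximal in $T/Q$, one has $T/Q=(R/Q)\oplus\bar I$ with $\bar I$ the image of $I$, and $\bar I$ is a nonzero ideal with $\bar I^2=\bar I\ne0$ meeting $R/Q$ in $0$. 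Now Theorem \ref{t1}$(4)$ applied to $R/Q\subseteq T/Q$ does the work: the quotient $T/Q$ is prime, hence semiprime, with $\bar I^2\ne0$, so that result forces $R/Q$ to be semiprime, i.e. $Nil_*(R)\subseteq Q$. Combining this with the previous paragraph gives $Nil_*(R)=Nil_*(T)\cap R$. The only delicate points are the reduction to the prime quotient $T/Q$ and checking that $R/Q\subseteq T/Q$ satisfies the hypotheses of Theorem \ref{t1}$(4)$; the rest is routine bookkeeping of the two families of primes of $T$.
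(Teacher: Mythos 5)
Your proof is correct. The dichotomy is argued exactly as in the paper: assuming no prime of $T$ lies in $R$, maximality gives $R+Q=T$, hence $R\cap Q$ is prime in $R$ for every $Q\in Spec(T)$, and intersecting yields $Nil_*(R)\subseteq Nil_*(T)\cap R$; the inclusion $Nil_*(T)\cap R\subseteq Nil^*(R)$ is, as you note, unconditional since $Nil_*(T)\cap R$ is a nil ideal of $R$. Where you genuinely diverge is the final assertion for $T=R\oplus I$. The paper splits into the cases $I^2=0$ (equivalently $I\subseteq Nil_*(T)$), where it shows $Nil_*(T)=Nil_*(R)\oplus I$, and $I^2=I$, where it further splits on whether some prime of $T$ lies in $R$ and, if so, explicitly identifies that prime as $(R:T)=ann_T(I)$, a minimal prime of $T$. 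You instead partition $Spec(T)$ by whether a prime contains $I$, compute $\bigcap_{Q\supseteq I}Q=Nil_*(R)\oplus I$ via Remark \ref{t6}, obtain $Nil_*(T)\cap R\subseteq Nil_*(R)$ formally from the direct-sum decomposition, and reduce the reverse inclusion to showing $Nil_*(R)\subseteq Q$ for each prime $Q$ with $I\nsubseteq Q$ --- handled by the maximality argument when $Q\nsubseteq R$, and when $Q\subseteq R$ by passing to $R/Q\subseteq T/Q$ (still a maximal subring, with $T/Q=(R/Q)\oplus\bar I$, $\bar I\ne 0$, $\bar I^2=\bar I$) and invoking the semiprime transfer of Theorem \ref{t1}$(4)$. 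All the intermediate verifications (that $\bar I\cap(R/Q)=0$, that $\bar I\ne 0$, that $I^2=I$ whenever some prime omits $I$) check out. Your route is more uniform and avoids the explicit identification of the prime contained in $R$; the paper's route is longer but extracts that structural information ($Q=(R:T)=ann_T(I)$ is a minimal prime), which is of independent use in neighboring results.
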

\begin{proof}
It is clear that if $R$ contains a prime ideal of $T$, then $(1)$ holds. Thus assume that $R$ does not contain any prime ideal $Q$ of $T$. Hence $R+Q=T$ by maximality of $R$ and therefore $R/(R\cap Q)\cong T/Q$ as rings. Thus $R\cap Q$ is a prime ideal of $R$. Thus $Nil_*(T)\cap R=\bigcap_{Q\in Spec(T)} (R\cap Q)$ is a semiprime ideal of $R$ and therefore contains $Nil_*(R)$. Since $Nil_*(T)$ is a nil ideal of $T$, we infer that $Nil_*(T)\cap R$ is nil ideal of $R$ and therefore is contained in $Nil^*(R)$, hence $(2)$ holds. Now assume that $I$ be a nonzero ideal of $T$ such that $R\cap I=0$ and therefore $T=R\oplus I$, by maximality of $R$. By $(1)$ of Theorem \ref{t1}, $I$ is minimal and hence we have two cases: $(i)$ $I^2=0$ (or whenever $I\subseteq Nil_*(T)$). In this case first we claim that $Nil_*(T)=Nil_*(R)\oplus I$. To see this first note that since $I^2=0$, we deduce that for each prime ideal $Q$ of $T$, we have $I\subseteq Q$ and therefore $Q\nsubseteq R$. Hence by $(2)$, $Nil_*(R)\subseteq Nil_*(T)$. Therefore $Nil_*(R)\oplus I\subseteq Nil_*(T)$. Now since $T/(Nil_*(R)\oplus I)\cong R/Nil_*(R)$ as rings, we deduce that $Nil_*(R)\oplus I$ is a semiprime ideal of $T$ and therefore contains $Nil_*(T)$. Thus $Nil_*(R)\oplus I=Nil_*(T)$. Finally in this case we must show that $Nil_*(T)\cap R\subseteq Nil_*(R)$. Let $x\in Nil_*(T)\cap R$, thus $x=r+i$, where $r\in Nil_*(R)$ and $i\in I$, for $Nil_*(T)=Nil_*(R)\oplus I$. Thus $x-r=i\in I\cap R=0$ and therefore $x=r\in Nil_*(R)$. Thus the equality holds in this case. $(ii)$ Hence assume that $I^2=I$ and as we mentioned in the previous case we may also assume that $I\nsubseteq Nil_*(T)$, which immediately implies that $I\cap Nil_*(T)=0$, for $I$ is a minimal ideal of $T$. Now similar to the previous case if for each prime ideal $Q$ of $T$, $R$ does not contain $Q$, then by $(2)$, we deduce that $Nil_*(R)\subseteq Nil_*(T)\cap R$. Also note that as we see in case $(i)$, $Nil_*(R)\oplus I$ is a semiprime ideal of $T$ and hence contains $Nil_*(T)$. Now we prove that $Nil_*(T)=Nil_*(R)$. Assume that $x\in Nil_*(T)$, thus $x=r+i$, where $r\in Nil_*(R)$ and $i\in I$. Therefore $x-r=i\in Nil_*(T)\cap I=0$ and therefore $x=r\in Nil_*(T)$. Thus $Nil_*(T)=Nil_*(R)$. Hence assume that there exists a prime ideal $Q$ of $T$ such that $Q\subseteq R$. Thus $Q\subseteq (R:T)$. Since $I^2=I$, we deduce that $(R:T)=(R:T)_l=(R:T)_r$, by $(3)$ of Theorem \ref{t1}. Therefore $(R:T)$ is a prime ideal of $R$ and hence $Nil_*(R)\subseteq (R:T)$. Since $Q$ is a prime ideal of $T$, we also deduce that $Nil_*(T)\subseteq Q\subseteq (R:T)$. Since $Q$ is a prime ideal of $T$ which is contained in $R$ and $I\cap R=0$, we immediately conclude that $I\cap Q=0$, by minimality of $I$. Hence $QI=IQ=0$, and therefore $Q=l.ann_T(I)=r.ann_T(I)=ann_T(I)$. This immediate implies that $Q$ is a minimal prime ideal of $T$. Now since $Q=ann_T(I)$ is contained in $R$, we deduce that $Q=(R:T)$, for $(R:T)=(R:T)_l=r.ann_R(I)=r.ann_T(I)=Q$, and therefore $Nil_*(R)\subseteq Q$. Now note that similar to the previous, $Nil_*(T)\subseteq Nil_*(R)\oplus I$ and $Nil_*(T)\subseteq R$, imply that $Nil_*(T)\subseteq Nil_*(R)$. On the other hand for each minimal prime ideal $Q'\neq Q$ of $T$, we deduce that $Q'\nsubseteq R$, for $Q=(R:T)$ is a minimal prime ideal of $T$. Therefore $R+Q'=T$ by maximality of $R$ and thus $R/(R\cap Q')\cong T/Q'$, as rings. Hence $R\cap Q'$ is a prime ideal of $R$. Therefore $Nil_*(R)\subseteq Q'$. Hence $Nil_*(R)$ is contained in each minimal prime ideal of $T$ and therefore is contained in $Nil_*(T)$. Thus $Nil_*(T)=Nil_*(R)$.
\end{proof}

\begin{rem}\label{t22}
Similar to the proof of the previous theorem one can easily see that if $R$ is a maximal subring of a ring $T$, $T=R\oplus I$, where $I$ is an ideal of $T$ and $I\subseteq J(T)$ (resp. $I\subseteq Nil^*(T)$), then $J(T)=J(R)\oplus I$ (resp. $Nil^*(T)=Nil^*(R)\oplus I$).
\end{rem}

In the next two propositions we study the socle and singularity for the ring extension $R\subseteq T$, where $R$ is a maximal subring of $T$ and $T$ is of the form $R\oplus I$, where $I$ is an ideal of $T$.

\begin{prop}\label{t23}
Let $R$ be a maximal subring of a ring $T$ and $I$ be a nonzero ideal of $T$ such that $R\cap I=0$. Then either $Soc({}_RR)=Soc({}_RT)\subseteq (R:T)_r$ or the following hold:
\begin{enumerate}
\item $Soc({}_RT)=Soc({}_RR)\oplus Soc({}_RI)$.
\item $Soc({}_RI)=I$, i.e., ${}_RI$ is a semisimple left $R$-module.
\item All simple $R$-submodules of ${}_R I$ are isomorphic as a left $R$-module. In fact ${}_RI\cong \oplus_{\alpha\in \Gamma}S$, as left $R$-module, for a simple left $R$-submodule of $I$, and a set $\Gamma$.
\item $(R:T)_r$ is a left primitive ideal of $R$. In particular, $J(R)\subseteq (R:T)_r$.
\item $J(R)=J(T)\cap R$ and $Nil^*(R)=Nil^*(T)\cap R$.
\item if in addition, $I^2=0$, then $I=RaR$, for each $0\neq a\in I$.
\end{enumerate}
\end{prop}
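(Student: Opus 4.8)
The plan is to split on whether $Soc({}_RI)=0$, after recording the basic structure from Theorem \ref{t1}: since $R$ is maximal and $R\cap I=0$ we have $T=R\oplus I$ as $R$-bimodules, $I$ is a minimal (two-sided) ideal of $T$, and $(R:T)_r=l.ann_R(I)$. Because the decomposition $T=R\oplus I$ is one of left $R$-modules, the socle splits as $Soc({}_RT)=Soc({}_RR)\oplus Soc({}_RI)$, which is precisely item $(1)$ and holds unconditionally. If $Soc({}_RI)=0$ then $Soc({}_RR)=Soc({}_RT)$, and I would establish the first alternative in full by checking $Soc({}_RR)\subseteq (R:T)_r$: for a simple left $R$-submodule $S\subseteq R$ and any $x\in I$, the map $s\mapsto sx$ is left $R$-linear with image $Sx\subseteq Rx\subseteq I$, so $Sx$ is either $0$ or a simple submodule of $I$; the latter is impossible since $Soc({}_RI)=0$, forcing $SI=0$ and hence $Soc({}_RR)\cdot I=0$, i.e. $Soc({}_RR)\subseteq l.ann_R(I)=(R:T)_r$.

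The heart of the argument is the case $Soc({}_RI)\neq 0$, and the single device I would extract is: \emph{any nonzero left $R$-submodule $N$ of $I$ that is simultaneously a right ideal of $T$ must equal $I$}. Indeed $R+N$ is then a subring of $T$ (products land back in $R+N$ because $N$ is a left $R$-module and a right ideal), it properly contains $R$ since $0\neq N\subseteq I$ and $R\cap I=0$, so maximality gives $R+N=T$; as $N\subseteq I$, the modular law yields $N=I\cap(R\oplus N)=I$. I expect the main work to be verifying the right-ideal hypothesis for the relevant submodules rather than the lemma itself. For $(2)$, $Soc({}_RI)$ is a right ideal of $T$ because for simple $S\subseteq I$ and $t\in T$ the image $St$ is $0$ or a simple submodule of $It\subseteq I$, hence lies in $Soc({}_RI)$; being nonzero it equals $I$, so ${}_RI$ is semisimple. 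For $(3)$ I would run the same computation on each homogeneous component $H_\lambda$ of $I=Soc({}_RI)$: since right multiplication by $t$ is left $R$-linear, $H_\lambda t$ is again semisimple of type $\lambda$ and contained in $I$, hence $H_\lambda t\subseteq H_\lambda$, so each $H_\lambda$ is a right ideal of $T$; by the lemma every nonzero $H_\lambda$ equals $I$, and as the $H_\lambda$ are independent there is exactly one, i.e. all simple submodules of ${}_RI$ are isomorphic and $I\cong S^{(\Gamma)}$.

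With $(2)$ and $(3)$ in hand the remaining items are quick. For $(4)$, $(R:T)_r=l.ann_R(I)=l.ann_R(S^{(\Gamma)})=l.ann_R(S)$ is the annihilator of a simple left $R$-module, hence a left primitive ideal (this is also exactly the second conclusion of Proposition \ref{t9}), and then $J(R)\subseteq (R:T)_r$ since $J(R)$ lies in every primitive ideal. Item $(5)$ I would obtain by playing $(4)$ against the structure theorems: from $(4)$, $J(R)\subseteq (R:T)_r=l.ann_R(I)$ gives $J(R)I=0$, which is incompatible with the alternative branch of Theorem \ref{t18} (where $J(R)I=I\neq 0$), so $J(R)=J(T)\cap R$; and since any nil ideal lies in the Jacobson radical, $Nil^*(R)\subseteq J(R)\subseteq (R:T)_r$ gives $Nil^*(R)I=0$, contradicting the alternative branch of Theorem \ref{t20} (where $Nil^*(R)I=I$ by Theorem \ref{t1}$(7)$), so $Nil^*(R)=Nil^*(T)\cap R$. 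Finally for $(6)$, if $I^2=0$ then for $0\neq a\in I$ the cross terms $RaI$, $IaR$, $IaI$ all vanish (each involves $aI$ or $Ia\subseteq I^2=0$), so $TaT=RaR$; as $TaT$ is a nonzero two-sided ideal contained in the minimal ideal $I$, it equals $I$, whence $RaR=I$.
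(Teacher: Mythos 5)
Your proof is correct and rests on the same mechanism as the paper's: maximality of $R$ forces any nonzero left $R$-submodule of $I$ that is also a right ideal of $T$ to equal $I$, and items $(4)$--$(6)$ are handled just as in the paper (annihilator of a simple module, incompatibility with the alternative branches of Theorems \ref{t18} and \ref{t20}, and minimality of $I$). The only cosmetic difference is that the paper applies this device once to $ST$ for a single simple $S\subseteq I$, obtaining $(2)$ and $(3)$ in one stroke, whereas you apply it first to $Soc({}_RI)$ and then to the homogeneous components; both are valid.
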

\begin{proof}
Since $T=R\oplus I$, we conclude that $Soc({}_RT)=Soc({}_RR)\oplus Soc({}_RI)$. Hence if $Soc({}_RI)=0$, then $Soc({}_RR)=Soc({}_RT)$. Now note that by the proof of $(3)$ in Lemma \ref{t8},  $Soc({}_RT)$ is a right ideal of $T$, which is contained in $R$, therefore $Soc({}_RR)=Soc({}_RT)\subseteq (R:T)_r$. Hence assume that $Soc({}_RI)\neq 0$. Let $S$ be a simple left $R$-submodule of $I$. Therefore $ST$ is a $(R,T)$-subbimodule of $I$, which is not contained in $R$. Thus $R\oplus ST=T$, by maximality of $R$. Since $ST\subseteq I$ and $R\oplus I=T$, we immediately conclude that $I=ST$. Hence $I=ST=\sum_{t\in T}St$ and for each $t\in T$, note that either $St=0$ or $St\cong S$ as a left $R$-module. Hence $I=ST$ is a semisimple left $R$-module, all of simple left $R$-submodule of $I$ are isomorphic to $S$, i.e., $Soc({}_RI)=I$ and there exists $T'\subseteq T$ such that $I\cong\bigoplus_{t\in T'} S$, as left $R$-modules. Hence $T/R\cong I\cong \bigoplus_{t\in T'} S$ and therefore $(R:T)_r=l.ann_R(S)$ is a left primitive ideal of $R$. $(5)$ is an immediate consequences of $(3)$ of Theorem \ref{t18}, $(3)$ of Theorem \ref{t20}, the facts that ${}_RI$ has a maximal (left) $R$-submodule in this case, and also $J(R)I=0$. Finally for $(6)$, if $I^2=0$, then for each nonzero $a\in I$, one can easily check that $R\oplus RaR$ is a subring of $T$ which properly contains $R$ and therefore $R\oplus RaR=T$, by maximality of $R$. Now since $RaR\subseteq I$ and $R\oplus RaR=T=R\oplus I$, we conclude that $I=RaR$.
\end{proof}

\begin{prop}\label{t24}
Let $R$ be a maximal subring of a ring $T$ and $I$ be a nonzero ideal of $T$ such that $R\cap I=0$. Then either $Z({}_RT)=Z({}_RR)\subseteq (R:T)_r$ or the following hold:
\begin{enumerate}
\item $Z({}_RT)=Z({}_RR)\oplus Z({}_RI)$.
\item $Z({}_RI)=I$, i.e., ${}_RI$ is a singular left $R$-submodule of $T$ (i.e., ${}_R(T/R)$ is a singular module).
\item $Soc({}_RR)\subseteq (R:T)_r$.
\end{enumerate}
\end{prop}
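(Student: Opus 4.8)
The plan is to run the same strategy as in Proposition~\ref{t23}, replacing the socle by the singular submodule and exploiting that the singular functor is additive over direct sums. First I would record that, as left $R$-modules, $T=R\oplus I$ (since $I\nsubseteq R$ forces $R+I=T$ by maximality, and $R\cap I=0$), and that $(R:T)_r=l.ann_R(I)$ by $(2)$ of Theorem~\ref{t1}. The key structural observation is that $Z$ is additive over this decomposition: an element $(u,v)\in R\oplus I$ has left annihilator $l.ann_R(u)\cap l.ann_R(v)$, and a finite intersection of left ideals is essential in ${}_RR$ if and only if each factor is. Hence $(u,v)\in Z({}_RT)$ exactly when $u\in Z({}_RR)$ and $v\in Z({}_RI)$, giving $Z({}_RT)=Z({}_RR)\oplus Z({}_RI)$ with $Z({}_RI)=Z({}_RT)\cap I$. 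This is conclusion $(1)$, and it splits the proof according to whether $Z({}_RI)=0$.

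In the branch $Z({}_RI)=0$ I would establish the first alternative of the statement. Here the decomposition collapses to $Z({}_RT)=Z({}_RR)$, so it only remains to show $Z({}_RR)\subseteq (R:T)_r=l.ann_R(I)$. For $z\in Z({}_RR)$ and $a\in I$ the product $za$ lies in $I$ (as $I$ is an ideal), and $l.ann_R(za)\supseteq l.ann_R(z)$ is essential, so $za\in Z({}_RI)=0$. Thus $Z({}_RR)\,I=0$, i.e.\ $Z({}_RR)\subseteq l.ann_R(I)=(R:T)_r$, as required.

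In the branch $Z({}_RI)\neq 0$ I would prove $(2)$ and $(3)$. For $(2)$, note that $Z({}_RI)=Z({}_RT)\cap I$ is an $(R,T)$-subbimodule of $T$: it is a left $R$-submodule, and it is right $T$-stable because $Z({}_RT)$ is (Lemma~\ref{t11}) and $I$ is a two-sided ideal. Consequently $R+Z({}_RI)$ is a subring of $T$, and since $Z({}_RI)\subseteq I$ with $I\cap R=0$ it properly contains $R$; maximality forces $R+Z({}_RI)=T=R\oplus I$, which yields $Z({}_RI)=I$. For $(3)$ the conceptual input is that the left socle annihilates the left singular submodule: if $S$ is a minimal left ideal of $R$ and $z\in Z({}_RT)$, then $l.ann_R(z)$ is essential, so $S\cap l.ann_R(z)\neq 0$, whence $S\subseteq l.ann_R(z)$ by simplicity of $S$; summing over minimal left ideals gives $Soc({}_RR)\,Z({}_RT)=0$. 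Since $I=Z({}_RI)\subseteq Z({}_RT)$, this delivers $Soc({}_RR)\,I=0$, i.e.\ $Soc({}_RR)\subseteq l.ann_R(I)=(R:T)_r$.

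Most of the work is routine module theory; the two steps requiring care are verifying that $Z({}_RI)$ is right $T$-stable—this is precisely where the bimodule structure of Lemma~\ref{t11} is needed, so that $R+Z({}_RI)$ is genuinely a subring and maximality can be invoked—and the identity $Soc({}_RR)\,Z({}_RT)=0$, which is the heart of part $(3)$ and is already implicit in the proof of Lemma~\ref{t11}. I expect the former to be the main obstacle to state cleanly, since it is where the ring extension (rather than merely the module) hypothesis enters.
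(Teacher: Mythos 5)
Your proof is correct and follows essentially the same route as the paper: the decomposition $Z({}_RT)=Z({}_RR)\oplus Z({}_RI)$, the maximality argument applied to the nonzero $(R,T)$-subbimodule $Z({}_RI)\subseteq I$ to force $Z({}_RI)=I$, and the identity $Soc({}_RR)\,Z({}_RT)=0$ are exactly the steps used there. The only differences are cosmetic: you spell out standard facts (additivity of $Z$ over direct sums, $Soc\cdot Z=0$, and the direct computation $Z({}_RR)I=0$ in the first branch) that the paper handles by citing Lemma~\ref{t11}.
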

\begin{proof}
First note that $T=R\oplus I$ implies that $Z({}_RT)=Z({}_RR)\oplus Z({}_RI)$. Hence if $Z({}_RI)=0$, then $Z({}_RR)=Z({}_RT)$ and since $Z({}_RT)$ is a right ideal of $T$, by Lemma \ref{t11}, we deduce that $Z({}_RT)=Z({}_RR)\subseteq (R:T)_r$. Thus assume that $Z({}_RI)\neq 0$. Since $I$ is an ideal of $T$ we deduce that $Z({}_RI)$ is a right ideal of $T$ which is contained in $I$, and clearly is a left nonzero $R$-submodule of $T$, therefore $R\oplus Z({}_RI)=T$. Hence by $R\oplus Z({}_RI)=T=R\oplus I$ and $Z({}_RI)\subseteq I$ we deduce that $Z({}_RI)=I$, i.e., $I$ is a singular left $R$-submodule of $T$. Finally note that $Soc({}_RR)Z({}_RI)=0$ implies that $Soc({}_RR)I=0$ and since $T/R\cong I$ as left $R$-modules, we conclude that $Soc({}_RR)T\subseteq R$. Therefore $Soc({}_RR)\subseteq (R:T)_r$.
\end{proof}

Next we investigate about the relation between lower/upper nilradical and Jacobson radical of $R$ and $T$, where $R$ is a maximal subring of a ring $T$ and either $R$ or $T$ satisfies ACC or DCC.

\begin{prop}\label{t25}
Let $R$ be a maximal subring of a ring $T$ and $I$ be a nonzero ideal of $T$ with $R\cap I=0$. Then the following hold:
\begin{enumerate}
\item If either $R$ or $T$ is a one-sided Artinian ring, then $J(R)=J(T)\cap R$ and $Nil^*(R)=Nil^*(T)\cap R$.
\item If $R$ is left Artinian ring, then $Soc({}_RI)=I$. In particular, ${}_R T$ is Artinian if and only if ${}_RI$ is finitely generated.
\end{enumerate}
\end{prop}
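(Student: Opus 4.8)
The plan is to feed the hypotheses into the dichotomies already recorded in Theorems \ref{t18}, \ref{t20} and Proposition \ref{t23}, using Artinianness only to rule out the ``bad'' branch of each. Two elementary facts do all the work. First, a one-sided Artinian ring is zero-dimensional: its Jacobson radical is nilpotent, hence lies inside every prime ideal, and the quotient by it is semisimple, so every prime ideal is maximal and $dim=0$. Second, over a left Artinian ring every nonzero left module $M$ has nonzero socle, since for $0\neq m\in M$ the cyclic module $Rm\cong R/l.ann_R(m)$ is a nonzero Artinian module and thus contains a minimal (simple) submodule. Throughout I use $T=R\oplus I$ (by maximality of $R$) and the attendant left $R$-module splitting.

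For the first assertion of $(1)$, Theorem \ref{t18} offers either $J(R)=J(T)\cap R$, which is the desired equality, or a bad branch whose clause $(3)$ gives $dim(R)\geq 1$ and whose clause $(6)$ gives $dim(T)\geq 2$. If $R$ is one-sided Artinian then $dim(R)=0$, and if $T$ is one-sided Artinian then $dim(T)=0$; either contradicts the bad branch, so $J(R)=J(T)\cap R$. For the upper nilradical I run Theorem \ref{t20} identically. When $T$ is one-sided Artinian, $dim(T)=0$ contradicts clause $(4)$ of its bad branch. When $R$ is one-sided Artinian, I note that $(R:T)$ is a prime ideal of $R$ by Theorem \ref{pt3}, hence a maximal ideal of $R$ since $dim(R)=0$; then $Nil^*(R)\subseteq J(R)\subseteq (R:T)$, because a nil ideal lies in $J(R)$ and $J(R)$ lies in every maximal two-sided ideal, contradicting clause $(3)$ of the bad branch $Nil^*(R)\nsubseteq(R:T)$. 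In every case the good branch survives, so $Nil^*(R)=Nil^*(T)\cap R$.

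For $(2)$, apply the second elementary fact to the nonzero module ${}_RI$ to get $Soc({}_RI)\neq 0$. Proposition \ref{t23} then forces its bad branch: the good branch $Soc({}_RR)=Soc({}_RT)$ would, through the splitting $Soc({}_RT)=Soc({}_RR)\oplus Soc({}_RI)$, give $Soc({}_RI)=0$, a contradiction. Clause $(2)$ of the surviving bad branch is exactly $Soc({}_RI)=I$, so ${}_RI$ is a semisimple left $R$-module. For the final equivalence, write $T=R\oplus I$ as left $R$-modules; since $R$ is left Artinian, ${}_RR$ is Artinian, so ${}_RT$ is Artinian if and only if ${}_RI$ is Artinian. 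A semisimple module is Artinian if and only if it is a finite direct sum of simples if and only if it is finitely generated, whence ${}_RT$ is Artinian if and only if ${}_RI$ is finitely generated.

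I expect the only genuine obstacle to be bookkeeping: confirming that the dimension and annihilator constraints I invoke are literally the ones recorded in the bad branches of Theorems \ref{t18} and \ref{t20}, and that $(R:T)$ being prime in $R$ (Theorem \ref{pt3}) really upgrades to maximal under zero-dimensionality. Granting these, every step above is routine.
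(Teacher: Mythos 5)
Your proposal is correct and follows essentially the same route as the paper: both rule out the ``bad'' branches of Theorems \ref{t18} and \ref{t20} via zero-dimensionality of a one-sided Artinian ring (together with $(R:T)_l$ being prime, hence maximal, in $R$ to contradict $Nil^*(R)\nsubseteq (R:T)$), and both derive $(2)$ from $Soc({}_RI)\neq 0$ plus Proposition \ref{t23}. You merely spell out a few steps the paper leaves implicit, such as why $Soc({}_RI)\neq 0$ and why the final Artinian/finitely-generated equivalence holds.
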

\begin{proof}
$(1)$ First assume that $T$ is a one-sided Artinian ring, therefore $dim(T)=0$. Hence by $(6)$ of Theorem \ref{t18}, we deduce that $J(T)\cap R=J(R)$; and by $(4)$ of Theorem \ref{t20}, we conclude that $Nil^*(T)\cap R=Nil^*(R)$. Now assume that $R$ is a one-sided Artinian ring, thus $dim(R)=0$. Hence by $(3)$ of Theorem \ref{t18}, we have $J(T)\cap R=J(R)$. Finally note that by Theorem \ref{pt1}, $(R:T)_l$ is a prime ideal of $R$, thus by $(2)$ and $(3)$ of Theorem \ref{t20}, we conclude that $Nil^*(T)\cap R=Nil^*(R)$. For $(2)$, since $R$ is a left Artinian ring, we deduce that $Soc({}_RI)\neq 0$. Therefore by $(2)$ of Proposition \ref{t23}, we conclude that $Soc({}_RI)=I$. Consequently, ${}_RT$ is Artinian if and only if ${}_RI$ is finitely generated and if any of these equivalent conditions holds then $T$ is a left Artinian ring.
\end{proof}

If $S$ is a one-sided Noetherian ring, it is well known that the set of minimal prime ideals of $S$ is finite and $Nil^*(S)=Nil_*(S)$ is a nilpotent ideal of $S$ and every one-sided nil ideal of $S$ is nilpotent, see \cite{lam} and \cite{rvn}. Moreover, if $R$ is a subring of $S$, then $ACC$ holds on the set of one-sided annihilators of $R$, see \cite{lam2}, and thus $Nil^*(R)=Nil_*(R)$, see \cite{lam}.

\begin{cor}\label{t26}
Let $T$ be a left Noetherian ring and $R$ be a maximal subring of $T$. Then either $Nil^*(T)\subseteq Nil_*(R)$ or $Nil^*(T)\cap R=Nil^*(R)=Nil_*(R)$ is a nilpotent ideal.
\end{cor}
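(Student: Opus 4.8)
The plan is to reduce everything to the dichotomy of Theorem \ref{t20} together with the two standard facts recalled in the paragraph immediately preceding the statement. First I would record the two identifications that make the conclusion meaningful: since $T$ is left Noetherian, $Nil^*(T)=Nil_*(T)$ is a \emph{nilpotent} ideal of $T$; and since $R$ is a subring of the left Noetherian ring $T$, the ascending chain condition holds on the one-sided annihilators of $R$, whence $Nil^*(R)=Nil_*(R)$. The first fact supplies the nilpotency asserted in the conclusion, and the second lets the conclusion be phrased in terms of $Nil_*(R)$.

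Next I would split according to Theorem \ref{t20}. In case $(i)$ there is a strongly prime ideal $Q$ of $T$ with $Nil^*(T)\subseteq Q\subseteq (R:T)$. Because $(R:T)$ is the largest common ideal of $R$ and $T$, it is contained in $R$ and is an ideal of $R$; hence $Nil^*(T)$, being an ideal of $T$ lying inside $(R:T)$, is in particular a two-sided ideal of $R$. As $Nil^*(T)$ is a nil ideal of $T$, it is a nil ideal of $R$, so $Nil^*(T)\subseteq Nil^*(R)=Nil_*(R)$; this is the first alternative. In case $(ii)$ one has $Nil^*(T)\cap R=Nil^*(R)$, and combining this with $Nil^*(R)=Nil_*(R)$ gives the chain $Nil^*(T)\cap R=Nil^*(R)=Nil_*(R)$, which is the second alternative once nilpotency is verified.

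The only step needing care, and the point I would flag as the main (though modest) obstacle, is the transfer of nilpotency from $T$ to $R$ in case $(ii)$: if $(Nil^*(T))^n=0$ in $T$, then every product of $n$ elements of $Nil^*(T)\cap R$ is already a product of $n$ elements of $Nil^*(T)$ and so vanishes in $T$; since the multiplication of $R$ is the restriction of that of $T$, the $n$-th power of $Nil^*(T)\cap R$ computed inside $R$ is zero, so $Nil^*(R)=Nil_*(R)=Nil^*(T)\cap R$ is nilpotent. Everything else is a direct reading of Theorem \ref{t20} and of the recalled Noetherian facts, so no genuinely new argument is required beyond these observations.
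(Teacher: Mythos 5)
Your argument is correct and is essentially the paper's own: the dichotomy you read off from Theorem \ref{t20} is exactly the one driving the paper's proof (a strongly prime ideal of $T$ inside $R$, equivalently $Nil^*(T)\subseteq R$, versus $R+Nil^*(T)=T$), and the nilpotency transfer is the same application of Levitzki's theorem recalled just before the statement. The only minor difference is that in your case $(ii)$ you import the recalled fact $Nil^*(R)=Nil_*(R)$ for subrings of left Noetherian rings, whereas the paper gets this equality for free from the sandwich $Nil^*(T)\cap R\subseteq Nil_*(R)\subseteq Nil^*(R)\subseteq Nil^*(T)\cap R$ (the first inclusion because $Nil^*(T)\cap R$ is nilpotent); both are legitimate.
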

\begin{proof}
Since $T$ is a left Noetherian ring, we infer that $Nil^*(T)$, and therefore $Nil^*(T)\cap R$, are nilpotent. Thus $Nil^*(T)\cap R\subseteq Nil_*(R)$. Hence, if $Nil^*(T)\subseteq R$, then $Nil^*(T)\subseteq Nil_*(R)$. Thus assume that $Nil^*(T)\nsubseteq R$. Therefore $R+Nil^*(T)=T$, by maximality of $R$ and thus $R/(R\cap Nil^*(T))\cong T/Nil^*(T)$ as rings. Hence $R/(R\cap Nil^*(T))$ has no nil ideal and therefore $Nil^*(R)\subseteq Nil^*(T)\cap R$. Since $Nil^*(T)\cap R\subseteq Nil_*(R)\subseteq Nil^*(R)$, we conclude that $Nil^*(T)\cap R= Nil^*(R)=Nil_*(R)$ is nilpotent.
\end{proof}

\begin{rem}\label{t27}
If $T$ is a left Artinian ring, then each prime ideal of $T$ is maximal and thus minimal prime ideals, prime ideals, left/right primitive ideals and maximal ideals of $T$ are coincided,  and $T$ has only finitely many maximal ideals, say  $P_1,\ldots,P_n$. Hence $J(T)=Nil^*(T)=Nil_*(T)=\bigcap_{i=1}^nP_i$ is a nilpotent ideal of $T$, see \cite{rvn}.
\end{rem}

\begin{thm}\label{t28}
Let $T$ be a left Artinian ring and $R$ be a maximal subring of $T$. Then exactly one of the following holds:
\begin{itemize}
\item[$(1)$] There exists a (unique) maximal ideal $M$ of $T$ such that $M \subseteq R$. In particular, $R/M$ is a maximal subring of Artinian simple ring $T/M$ (which is isomorphic to $\mathbb{M}_n(D)$, for some natural number $n$ and a division ring $D$). Moreover, $(R:T)=M$ and each prime ideal of $R$ either contains $(R:T)=M$ or is a maximal ideal of $R$.
\item[$(2)$] $R$ is a zero-dimensional ring, $Spec(R)$ is finite ($|Spec(R)|\leq |Max(T)|$) and $J(R)=Nil^*(R)=Nil_*(R)=J(T)\cap R$ is a nilpotent ideal of $R$ and one of the following holds:
\begin{itemize}
\item[$(a)$] There exist maximal ideals $M\neq N$ of $T$ such that $T/M\cong T/N$ as rings.
\item[$(b)$] $R/J(R)\cong T/J(T)$ as rings. In particular, $R$ is a semiprimary ring.
\end{itemize}
\end{itemize}
\end{thm}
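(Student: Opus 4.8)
The plan is to split on whether some maximal ideal of $T$ is contained in $R$. Throughout I use Remark~\ref{t27}: since $T$ is left Artinian, its prime, maximal and left primitive ideals coincide, there are only finitely many of them, say $P_1,\dots,P_n$, and $J(T)=Nil^*(T)=Nil_*(T)=\bigcap_{i=1}^n P_i$ is nilpotent. Before the case split I record two facts used in both branches. First, because $J(T)$ is nilpotent, $J(T)\cap R$ is a nilpotent ideal of $R$, hence $J(T)\cap R\subseteq Nil_*(R)\subseteq Nil^*(R)\subseteq J(R)$ (a nil ideal always lies in the Jacobson radical). Second, whenever $P_i\nsubseteq R$, maximality of $R$ gives $R+P_i=T$, so $R/(R\cap P_i)\cong T/P_i$; as $T/P_i$ is simple Artinian, $R\cap P_i$ is a maximal, indeed primitive, ideal of $R$.

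Suppose first that some $P_i\subseteq R$. I set $M=(R:T)$, the largest ideal of $T$ contained in $R$, which is proper since $R\neq T$. As $T$ is Artinian, $M$ lies in a maximal ideal of $T$; on the other hand $P_i\subseteq M$, so maximality of $P_i$ forces $M=P_i$, a maximal ideal of $T$. Uniqueness is immediate, since any maximal ideal of $T$ inside $R$ lies in $M$ and hence equals it. Because $M=(R:T)\subseteq R$ is a two-sided ideal of $T$, every subring between $R$ and $T$ contains $M$, so by the correspondence theorem $R/M$ is a maximal subring of $T/M$, which is simple Artinian and therefore isomorphic to $\mathbb{M}_n(D)$ by Wedderburn--Artin. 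For the statement on $Spec(R)$, take any prime $P$ of $R$. Since $J(T)\cap R=\bigcap_k(P_k\cap R)\supseteq\prod_k(P_k\cap R)$ is nilpotent it lies in $P$, and primeness yields $P_k\cap R\subseteq P$ for some $k$. If $k\neq i$ then $P_k\nsubseteq R$ (otherwise $P_k\subseteq M=P_i$), so $P_k\cap R$ is maximal and $P=P_k\cap R$; if $k=i$ then $M=P_i\cap R\subseteq P$. This is exactly the desired dichotomy.

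Now suppose no $P_i\subseteq R$. Then each $R\cap P_i$ is a primitive ideal of $R$ by the preliminary remark, so $J(R)\subseteq\bigcap_i(R\cap P_i)=J(T)\cap R$; combined with $J(T)\cap R\subseteq J(R)$ this gives $J(R)=J(T)\cap R=Nil^*(R)=Nil_*(R)$, a nilpotent ideal. Any prime $P$ of $R$ contains the nilpotent ideal $\prod_i(R\cap P_i)$, hence contains some $R\cap P_i$ and equals it, so $Spec(R)=\{R\cap P_i\}$ consists only of maximal ideals; thus $R$ is zero-dimensional with $|Spec(R)|\leq n=|Max(T)|$. Writing $N_1,\dots,N_k$ for the distinct contractions $R\cap P_i$, these are pairwise comaximal maximal ideals with $J(R)=\bigcap_l N_l$, so the Chinese Remainder Theorem gives $R/J(R)\cong\prod_{l=1}^k R/N_l\cong\prod_{l=1}^k T/P_{i_l}$, a semisimple Artinian ring, whence $R$ is semiprimary. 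If $k<n$ then two distinct $P_i,P_j$ satisfy $R\cap P_i=R\cap P_j$, so $T/P_i\cong R/(R\cap P_i)\cong T/P_j$ and case $(a)$ holds; if $k=n$ the contraction map is a bijection and $R/J(R)\cong\prod_{i=1}^n T/P_i\cong T/J(T)$, which is case $(b)$. Since a maximal ideal of $T$ either is or is not contained in $R$, the two branches are complementary and exactly one of $(1),(2)$ holds.

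The step I expect to be most delicate is pinning down $J(R)=J(T)\cap R$ in the second case: the inclusion $\subseteq$ needs the primitivity of each contracted ideal $R\cap P_i$, while $\supseteq$ needs the nilpotency of $J(T)$ transported into $R$, and only after both are in hand does the chain $J(R)=Nil^*(R)=Nil_*(R)$ close up. The other point requiring care is the bookkeeping of the Chinese Remainder Theorem when contractions collapse, which is precisely what separates subcases $(a)$ and $(b)$.
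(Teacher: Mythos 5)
Your proof is correct, and it follows the paper's overall skeleton — the case split on whether $R$ contains a maximal ideal of $T$, and the use of $R+P=T$ to see that contractions of non-contained maximal ideals are maximal in $R$ — but two of the sub-arguments are genuinely different from the paper's. First, to show every prime $P$ of $R$ is a contraction of some maximal ideal of $T$, the paper lifts the $m$-system $R\setminus P$ to a prime $N$ of $T$ with $N\cap R\subseteq P$, whereas you push the nilpotent product $\prod_k(P_k\cap R)$ into $P$ and invoke primeness; your route is more self-contained (no $m$-system lifting), but leans on the finiteness of $Max(T)$ and nilpotency of $J(T)$, which are available here by Remark \ref{t27}. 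Second, and more substantially, for the dichotomy $(a)/(b)$ the paper splits on whether $J(T)\subseteq R$: if so, $R/J(T)$ is a maximal subring of the product $T/M_1\times\cdots\times T/M_n$ and Theorem \ref{pt1} forces two isomorphic factors (case $(a)$); if not, $R+J(T)=T$ gives $(b)$ directly. You instead split on whether the contraction map on maximal ideals is injective, getting $(a)$ by pigeonhole and $(b)$ by the Chinese Remainder Theorem applied in both $R$ and $T$. Your version avoids Theorem \ref{pt1} entirely and has the pleasant byproduct that $R/J(R)$ is semisimple (hence $R$ semiprimary) in both subcases; the paper's version buys the extra structural information that $(a)$ versus $(b)$ is governed by whether $J(T)\subseteq R$, together with the explicit description $R/J(T)\cong\prod_{k\neq i}T/M_k$ in case $(a)$. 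Both arguments are valid.
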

\begin{proof}
First assume that there exists a maximal ideal $M$ of $T$ such that $M\subseteq R$. It is clear that $M$ must be unique for $R$ is a proper subring of $T$ and also $(R:T)=M$. Now let $P$ be a prime ideal of $R$, then $R\setminus P$ is a $m$-system in $T$ and therefore there exists a prime (maximal) ideal $N$ of $T$ such that $(R\setminus P)\cap N=0$. Thus $R\cap N\subseteq P$. Now we have two cases: $(i)$ $N\subseteq R$ and therefore $N=M=(R:T)\subseteq P$. $(ii)$ $N\nsubseteq R$, therefore $R+N=T$, by the maximality of $R$. Hence $R/(R\cap N)\cong T/N$, as rings. Thus $R\cap N$ is a maximal ideal of $R$, for $T/N$ is a simple ring. Therefore $P=R\cap N$ is a maximal ideal of $R$. Now assume that $R$ does not contain any maximal ideal of $T$ and let $Max(T)=\{M_1,\ldots,M_n\}$. Similar to the latter proof for each $i$, $R\cap M_i$ is a maximal ideal of $R$ and each prime ideal of $R$ is of the form $R\cap M_i$ for some $i$. Therefore $R$ is a zero-dimensional ring and $|Spec(R)|\leq |Max(T)|$. Thus $J(R)=Nil^*(R)=Nil_*(R)\subseteq J(T)\cap R$. Now note that $J(T)\cap R$ is a nilpotent ideal of $R$ and therefore $J(T)\cap R\subseteq Nil_*(R)$. Hence $J(R)=Nil^*(R)=Nil_*(R)=J(T)\cap R$ is nilpotent. Now we have two cases: $(a)$ $J(T)\subseteq R$, thus $R/J(T)$ is a maximal subring of $T/J(T)=T/M_1\times\cdots\times T/M_n$. Therefore by Theorem \ref{pt1} and the fact that $R$ does not contain any maximal ideals of $T$, we deduce that there exist distinct maximal ideals $M_i$ and $M_j$ of $T$, $i\neq j$ such that $T/M_i\cong T/M_j$ as rings, and $R/J(T)\cong \prod_{i\neq k=1}^n T/M_k$. $(b)$ $J(T)\nsubseteq R$, therefore $R+J(T)=T$, by the maximality of $R$. Hence $R/J(R)=R/(J(T)\cap R)\cong T/J(T)$, as rings, and thus $R/J(R)$ is an Artinian ring and therefore $R$ is a semiprimary ring.
\end{proof}

\section{left Artinian maximal subrings}

In this section we want to study relation between lower/upper nilradicals and Jacobson radicals of $R$ and $T$, where $R$ is a maximal subring of a ring $T$ and $R$ is a left Artinian ring. First we need the following well known remark about Artinian subring of a ring and the next two lemmas.

\begin{rem}\label{t29}
Let $R$ be a left Artinian subring of a ring $T$, then one can easily see that $U(R)=U(T)\cap R$ and therefore $J(T)\cap R\subseteq J(R)$.
\end{rem}

Now the following is in order.

\begin{lem}\label{t30}
Let $R$ be a maximal subring of a ring $T$ and $R$ be a left Artinian ring. Then the following hold:
\begin{enumerate}
\item For each prime ideal $P$ of $T$, either $P\subseteq R$ (i.e., $P\subseteq (R:T)$) or $P$ is a maximal ideal of $T$.
\item Either $J(T)\subseteq J(R)$ or $J(R)=J(T)\cap R$.
\item If each left (resp. right) primitive ideal of $T$ is not contained in $R$, then $J(R)=J(T)\cap R$ and each left (resp. right) primitive ideal of $T$ is maximal.
\item Either $Nil^*(T)\subseteq J(R)$ or $Nil^*(T)\cap R=J(R)=J(T)\cap R$.
\item If each strongly prime ideal of $T$ is not contained in $R$, then $Nil^*(T)\cap R=J(R)=J(T)\cap R$ and each strongly prime ideal of $T$ is maximal.
\item Either $Nil_*(T)\subseteq J(R)$ or $Nil_*(T)\cap R=J(R)=Nil^*(T)\cap R=J(T)\cap R$.
\item If each prime ideal of $T$ is not contained in $R$, then $Nil_*(T)\cap R=J(R)=Nil^*(T)\cap R=J(T)\cap R$ and $dim(T)=0$.
\item DCC holds on $Spec(T)$.
\item Each nonzero left $R$-submodule of $T$ contains a simple left $R$-submodule. In fact, $Soc({}_RT)$ is an essential left $R$-submodule of $T$. Moreover, if there exists a nonzero ideal $I$ of $T$ such that $I\cap R=0$, then $(1)-(6)$ of Proposition \ref{t23} hold.
\end{enumerate}
\end{lem}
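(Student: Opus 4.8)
The plan is to exploit repeatedly the single mechanism already used throughout this section: if $Q$ is a prime (resp.\ left primitive, strongly prime) ideal of $T$ with $Q\nsubseteq R$, then maximality of $R$ gives $R+Q=T$, whence $R/(R\cap Q)\cong T/Q$ as rings and $R\cap Q$ inherits the corresponding primeness in $R$. The Artinian hypothesis enters through two standard facts about $R$: first, $\dim(R)=0$, so every prime ideal of $R$ is maximal and a prime (equivalently left primitive, equivalently strongly prime) quotient of $R$ is simple Artinian; second, $J(R)=Nil^*(R)=Nil_*(R)$ is nilpotent and every nil one-sided ideal of $R$ lies in $J(R)$. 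I will also use Remark \ref{t29} freely, i.e.\ $J(T)\cap R\subseteq J(R)$. For $(1)$: given a prime $P$ of $T$, either $P\subseteq R$, and then $P\subseteq (R:T)$ since $P$ is already a common ideal, or $P\nsubseteq R$, and then $R/(R\cap P)\cong T/P$ forces $R\cap P$ to be a prime ideal of the Artinian ring $R$, hence maximal, so that $T/P$ is simple and $P$ is maximal in $T$.

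Parts $(3)$, $(5)$, $(7)$ are the ``all $Q\nsubseteq R$'' incarnations of the same computation. For each such $Q$ the contraction $R\cap Q$ is a maximal ideal of $R$ (a prime, primitive or strongly prime ideal of the Artinian ring $R$ is maximal), hence $Q$ itself is maximal in $T$; moreover $J(R)\subseteq R\cap Q$. Intersecting over the relevant family gives $J(R)\subseteq\bigcap_Q(R\cap Q)=(\bigcap_Q Q)\cap R$, which equals $J(T)\cap R$, $Nil^*(T)\cap R$ or $Nil_*(T)\cap R$ according to whether the family is the left primitive, strongly prime or prime ideals; the reverse inclusion is Remark \ref{t29}, combined with the observation that $Nil_*(T)\cap R\subseteq Nil^*(T)\cap R$ is nil in $R$ and so sits inside $J(R)$. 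In $(7)$ the extra claim $\dim(T)=0$ is immediate from $(1)$, since now no prime of $T$ lies in $R$, so every prime is maximal.

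Parts $(2)$, $(4)$, $(6)$ are dichotomies driven by whether the radical in question lies in $R$. If it does: for $J(T)$ this already gives $J(T)=J(T)\cap R\subseteq J(R)$ by Remark \ref{t29}, and for $Nil^*(T)$ or $Nil_*(T)$ it exhibits a nil ideal of $R$, hence contained in $Nil^*(R)=J(R)$, which is the first alternative. If the radical is not contained in $R$, then $R+J(T)=T$ (and likewise for the nilradicals), so $R/(R\cap J(T))\cong T/J(T)$ is Artinian with zero Jacobson radical, hence semisimple; thus $J(R)\subseteq R\cap J(T)$, and Remark \ref{t29} yields $J(R)=J(T)\cap R$. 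The nilradical versions run identically, using that $T/Nil^*(T)$ has zero upper nilradical and $T/Nil_*(T)$ is semiprime, that in an Artinian ring zero upper (resp.\ lower) nilradical means zero Jacobson radical and hence semisimplicity, and finally the chain $Nil_*(T)\cap R\subseteq Nil^*(T)\cap R\subseteq J(T)\cap R\subseteq J(R)$ to collapse all the displayed ideals to a single one.

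For $(8)$ I would argue directly on chains. Take a strictly descending chain $P_1\supsetneq P_2\supsetneq\cdots$ in $Spec(T)$. By $(1)$ each $P_i$ is either maximal in $T$ or contained in $(R:T)$; since maximal ideals are incomparable, at most $P_1$ can be maximal, and every $P_i$ with $i\geq 2$ lies in $(R:T)\subseteq R$. But each such $P_i$, being an ideal of $T$ contained in $R$, is a two-sided ideal of $R$, so $P_2\supsetneq P_3\supsetneq\cdots$ is a strictly descending chain of ideals of $R$; left Artinianness gives DCC on two-sided ideals, so this tail is finite and hence so is the whole chain. Finally, for $(9)$, any nonzero left $R$-submodule $M$ of $T$ contains a cyclic submodule $Rx$ which, being finitely generated over the Artinian ring $R$, is an Artinian module and therefore contains a minimal, i.e.\ simple, submodule; thus $Soc({}_RT)$ meets every nonzero submodule and is essential. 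If some nonzero ideal $I$ of $T$ satisfies $I\cap R=0$, then $T=R\oplus I$, and applying the first part to the nonzero submodule $I$ gives $Soc({}_RI)\neq 0$, which is exactly the hypothesis putting us in the second alternative of Proposition \ref{t23}, so $(1)$--$(6)$ there hold. I expect the only genuinely non-formal point to be $(8)$: one must notice that below a maximal ideal of $T$ the whole chain is trapped inside the conductor $(R:T)$ and therefore lives among the two-sided ideals of $R$, converting a dimension-type statement about $T$ into the ideal-theoretic DCC of $R$; the remaining parts are bookkeeping around the contraction isomorphism and the radical coincidences peculiar to Artinian rings.
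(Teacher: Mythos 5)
Your proposal is correct and follows essentially the same route as the paper: contract ideals not contained in $R$ via $R+Q=T$ and $R/(R\cap Q)\cong T/Q$, use zero-dimensionality and $J(R)=Nil^*(R)=Nil_*(R)$ for the left Artinian ring $R$ together with Remark \ref{t29}, and invoke $Soc({}_RI)\neq 0$ to land in the second alternative of Proposition \ref{t23}. The only difference is that you spell out details the paper compresses (the chain argument for $(8)$, which the paper dismisses as an immediate consequence of $(1)$, and the $Nil^*/Nil_*$ variants of $(2)$--$(3)$), and your filled-in versions are the intended ones.
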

\begin{proof}
$(1)$ Let $P$ be a prime ideal of $T$ which is not contained in $R$, then by maximality of $R$ we conclude that $R+P=T$ and therefore $R/(R\cap P)\cong T/P$, as rings. Hence $P\cap R$ is a prime ideal of $R$ and since $R$ is a left Artinian ring we conclude that $P\cap R$ is a maximal ideal of $R$ and therefore $P$ is a maximal ideal of $T$. $(2)$ If $J(T)\subseteq R$, then it is clear that $J(T)\subseteq J(R)$, by the previous remark. Hence assume that $J(T)\nsubseteq R$ and therefore by maximality of $R$ we conclude that $R+J(T)=T$. Hence $R/(J(T)\cap R)\cong T/J(T)$, as rings, and therefore $R/(J(T)\cap R)$ is a $J$-semisimple ring. This means that $J(T)\cap R$ is an intersection of a family of maximal left ideals of $R$ and therefore $J(R)\subseteq J(T)\cap R$. Thus $J(R)=J(T)\cap R$, by the previous remark. $(3)$ Assume that for each left primitive ideal $Q$ of $T$, $Q$ is not contained in $R$. Therefore by $(1)$, we deduce that $Q$ is a maximal ideal of $T$ and $Q\cap R$ is a maximal ideal of $R$. Hence $J(R)\subseteq Q\cap R$, for each left primitive ideal $Q$ of $T$ and therefore $J(R)\subseteq J(T)\cap R$. Hence we are done by the previous remark. The proofs of $(4)-(7)$ are similar to $(2)$ and $(3)$. $(8)$ is an immediate consequence of $(1)$ and the fact that $R$ is a left Artinian ring. $(9)$ Assume that $K$ be a nonzero left $R$-submodule of $T$ and $0\neq x\in K$. Since $R$ is a left Artinian ring, we infer that $Rx$ is an Artinian submodule of $K$. It is clear that $Rx$, and therefore $K$, contains a simple (left) $R$-submodule. Thus $Soc({}_RT)$ is an essential left $R$-submodule of $T$. The final part is evident, for $Soc({}_RI)\neq 0$, and use Proposition \ref{t23}.
\end{proof}

We present the next lemma in more general setting (i.e., without assuming $R$ is a left Artinian ring).

\begin{lem}\label{t31}
Let $R$ be a maximal subring of a ring $R$ and $M, N$ are distinct maximal ideals of $T$ which are not contained in $R$. If $P:=M\cap R=N\cap R$, then the following hold:
\begin{enumerate}
\item $P=(R:T)=M\cap N$ is a maximal ideal of $R$. Moreover, $P$ is not a prime ideal of $T$.
\item $T/M\cong R/P\cong T/N$ as rings and left/right $R$-modules.
\item $|Max(T)|\leq 1+|Max(R)|$
\item If $R$ is a left Noetherian (resp. Artinian) ring, then $T$ is a left Noetherian (resp. Artinian) $R$-module; in particular $T$ is a left Noetherian (resp. Artinian) ring.
\end{enumerate}
\end{lem}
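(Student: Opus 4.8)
The plan is to treat the four parts in the order (2), (1), (3), (4), since (2) is the soft input that everything else rests on.

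First, for (2): since $M\not\subseteq R$ and $R$ is maximal in $T$, the subring $R+M$ must equal $T$, so the natural ring map $R\to T/M$ is onto with kernel $R\cap M=P$; this gives a ring isomorphism $R/P\cong T/M$ which is simultaneously an isomorphism of left and right $R$-modules. The identical argument for $N$ yields $R/P\cong T/N$, establishing (2). Since $T/M$ is simple, $R/P$ is simple, so $P$ is a maximal ideal of $R$.

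The crux is (1), and the main obstacle is showing $M\cap N\subseteq R$. I would consider the ring map $\psi\colon R\to T/M\times T/N$, $r\mapsto(r+M,\,r+N)$; its kernel is $R\cap M\cap N=P$, so its image $\Gamma\cong R/P$ is simple. Each coordinate projection restricts to a surjection $\Gamma\to T/M$ and $\Gamma\to T/N$ (because $R+M=R+N=T$), and since $\Gamma$ is simple these surjections are isomorphisms; hence $\Gamma$ is the graph of an isomorphism $T/M\cong T/N$ and is a proper subring of $T/M\times T/N$. Now $J:=M\cap N$ is a two-sided ideal of $T$, so $R+J$ is a subring of $T$ containing $R$, whence $R+J=R$ or $R+J=T$ by maximality. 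Since $M+N=T$ we have $T/J\cong T/M\times T/N$ by the Chinese Remainder Theorem, and under this identification the image of $R+J$ is exactly $\Gamma\subsetneq T/M\times T/N$; thus $R+J=T$ is impossible, forcing $M\cap N\subseteq R$. From $M\cap N\subseteq M\cap R=P$ and $P\subseteq M\cap N$ I get $P=M\cap N$, and since $M\cap N$ is an ideal of $T$ inside $R$ it lies in $(R:T)$; as $P$ is maximal in $R$ and $(R:T)\neq R$ (else $R$ would be an ideal of $T$ containing $1$, so $R=T$), I conclude $(R:T)=P=M\cap N$. Finally $MN\subseteq P$ while $M,N\not\subseteq P$, so $P$ is not prime in $T$.

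For (3) I would study the contraction map $f\colon Max(T)\to Max(R)$, $Q\mapsto Q\cap R$. No maximal ideal of $T$ lies in $R$: if $Q\subseteq R$ then $Q\subseteq (R:T)=P\subseteq M$, forcing $Q=M$, contradicting $M\not\subseteq R$; so $Q\not\subseteq R$ for every $Q\in Max(T)$, and then $R+Q=T$ gives $Q\cap R\in Max(R)$, so $f$ is well defined. The key observation is that the argument of (1) applies verbatim to any two distinct maximal ideals of $T$ with equal contraction: if $Q_1\cap R=Q_2\cap R$ with $Q_1\neq Q_2$, then $Q_1\cap Q_2=(R:T)=P$ and the common contraction is forced to equal $P$. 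Hence every fiber of $f$ over a point $\neq P$ is a singleton. For the fiber over $P$, any distinct $Q_1,Q_2$ in it satisfy $Q_i\supseteq Q_1\cap Q_2=P$, so they descend to distinct maximal ideals of $T/P$; since $R/P$ is a simple maximal subring of $T/P$, Theorem \ref{pt2} gives at most two maximal ideals of $T/P$, so this fiber has at most two elements. Summing fiber sizes yields $|Max(T)|\leq 1+|Max(R)|$. Finally, for (4), I would use the short exact sequence of left $R$-modules $0\to P\to T\to T/P\to 0$. By (2) together with $M\cap N=P$ and $M+N=T$, the Chinese Remainder Theorem gives $T/P\cong T/M\times T/N\cong R/P\oplus R/P$ as left $R$-modules, which inherits the ascending (resp.\ descending) chain condition from ${}_RR$; the submodule $P\subseteq{}_RR$ does as well. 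Hence ${}_RT$ is Noetherian (resp.\ Artinian). Since every left ideal of $T$ is in particular a left $R$-submodule of $T$, the chain condition on ${}_RT$ forces the same condition on the left ideals of $T$, so $T$ is a left Noetherian (resp.\ Artinian) ring.
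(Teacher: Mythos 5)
Your proposal is correct and follows essentially the same route as the paper: part (2) via $R+M=T$, part (1) by showing $M\cap N\subseteq R$ (you do this through the graph $\Gamma$ and CRT, the paper equivalently notes that $R/P\cong T/(M\cap N)$ would contradict simplicity of $R/P$), part (3) by analyzing the fibers of the contraction map (the paper identifies the fiber over $P$ as exactly $\{M,N\}$ directly from primality, while you invoke Theorem \ref{pt2}; both work), and part (4) via the embedding $T/(M\cap N)\hookrightarrow R/P\times R/P$ as left $R$-modules. These are only presentational differences, so no further comparison is needed.
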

\begin{proof}
$(1)$ and $(2)$, since $M$ is not contained in $R$, we deduce that $R+M=T$ and therefore $R/(M\cap R)\cong T/M$ as rings and also left/right $R$-modules. Thus $P=M\cap R=N\cap R$ is a maximal ideal of $R$. Clearly, $P=(M\cap N)\cap R$. If $M\cap N$ is not contained in $R$, then by maximality of $R$ we conclude that $R+(M\cap N)=T$ and therefore $R/P\cong T/(M\cap N)$, as rings, which is impossible, for $R/P$ is a simple ring but $T/(M\cap N)$ is not. Hence $M\cap N\subseteq R$ and therefore $P=M\cap N\subseteq (R:T)$  is a maximal ideal of $R$. Thus $P=M\cap N=(R:T)$. For the final part of $(1)$, if $(R:T)$ is a prime ideal of $T$, then $MN\subseteq (R:T)$, implies that $M\subseteq (R:T)\subseteq R$ or $N\subseteq (R:T)\subseteq R$, which are impossible. Hence $(R:T)$ is not a prime ideal of $T$. $(3)$ Let $Q$ be a prime ideal of $T$ such that $Q\cap R=P$. Hence $M\cap N\subseteq Q$ and therefore $M\subseteq Q$ or $N\subseteq Q$, for $Q$ is prime. Thus $Q=M$ or $Q=N$. Therefore for each maximal ideal $Q$ of $T$, if $Q\neq M, N$, then we conclude that $Q\cap R\neq P$. Now if $Q, Q'$ are maximal ideal of $T$ which are not equal to $M$ or $N$, then $Q\cap R\neq Q'\cap R$, for otherwise by $(1)$ $M\cap N=Q\cap Q'\subseteq Q$, i.e., $Q=M$ or $Q=N$, which is absurd. Also note that by $(1)$, $Q$ is not contained in $R$, and therefore similar to the proof of $(1)$, we deduce that $Q\cap R$ is a maximal ideal of $R$. Hence $M$ and $N$ have a same contraction to $R$ and other maximal ideals of $T$ have different contractions (which also are not equal to $P$), therefore $|Max(T)|\leq 1+|Max(R)|$. Finally for $(4)$, note that $T/(M\cap N)$ embeds in $T/M\times T/N$ as left $T$-modules and therefore as left $R$-modules too. Hence by $(2)$, $T/(M\cap N)$ embeds in $R/P\times R/P$ as left $R$-modules. Thus, if $R$ is a left Noetherian (resp. Artinian) ring, then $T/(M\cap N)$ is a left Noetherian (resp. Artinian) $R$-module and since $M\cap N\subseteq R$, we immediately conclude that $T$ is a left Noetherian (resp. Artinian) $R$-module. The final part is evident.
\end{proof}

Assume that $R$ be an Artinian maximal subring of a ring $T$. For the next observations in this section we need that following remark about the behaviour of conductor ideals $(R:T)_l$ and $(R:T)_r$ in $T$.    

\begin{rem}\label{t32}
Let $R$ be a maximal subring of a ring $T$ and $R$ be a left Artinian ring. Then one of the following holds:
\begin{enumerate}
\item $(R:T)_l$ is not an ideal of $T$. In this case $(R:T)_r$ is not an ideal of $T$ and $(R:T)_l\neq (R:T)_r$. In particular, $R=\mathbb{I}((R:T)_l)=\mathbb{I}((R:T)_r)$.
\item $(R:T)=(R:T)_l=(R:T)_r$.
\end{enumerate}
To see this, first note that $(R:T)\subseteq (R:T)_l, (R:T)_r$ and $(R:T)_l$, $(R:T)_r$ are prime ideals of $R$ and therefore are maximal, for $R$ is a left Artinian ring. Also note that $(R:T)_l$ is an ideal of $T$ if and only if $(R:T)_l=(R:T)$. Since $(R:T)\subseteq (R:T)_r$ and $(R:T)_l$ is a maximal ideal of $R$, we infer that $(R:T)_l$ is an ideal of $T$ if and only if $(R:T)_l=(R:T)_r$, i.e., $(R:T)_l=(R:T)=(R:T)_r$. Similarly, $(R:T)_r$ is an ideal of $T$ if and only if $(R:T)_l=(R:T)=(R:T)_r$. Thus the first part of $(1)$ and $(2)$ are clear now. Finally, assume that $(R:T)_l$ and therefore $(R:T)_r$ are not ideals of $T$. Since these are ideals of $R$ and $R$ is a maximal subring of $T$, we immediately conclude that $R=\mathbb{I}((R:T)_l)=\mathbb{I}((R:T)_r)$.
\end{rem}

Also note that in Case $(1)$ of the previous remark, either $(R:T)_lT=T$ or $(R:T)_lT$ is a proper ideal of $T$. In the following, we study these cases separately, to find some relations between Jacobson radicals of the ring extension $R\subseteq T$, where $R$ is a left Artinian ring and is a maximal subring of $T$.

\begin{thm}\label{t33}
Let $R$ be a maximal subring of a ring $T$ and $R$ be a left Artinian ring. Assume that $(R:T)_lT=T$, then ${}_RT$ and $((R:T)_l)_R$ are finitely generated. In particular, $T$ is a left Artinian ring. Moreover, either $J(R)=J(T)\cap R$ or the following hold:
\begin{enumerate}
\item $Q:=(R:T)\in Max(T)$. In particular, $J(T)\subseteq J(R)\cap (R:T)$.
\item $T(R:T)_r=T$. In particular, $T_R$ is finitely generated.
\item If $M$ and $N$ are distinct maximal ideals of $T$ and $M,N\neq Q$, then $M\cap R\neq N\cap R$. In particular, $Max(R)=\{(R:T)_l, (R:T)_r\}\cup\{P\cap R\ |\ P\in Max(T), P\neq Q\}$ and $|Max(R)|=|Max(T)|+1$.
\end{enumerate}
\end{thm}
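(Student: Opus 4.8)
The plan is to extract everything from the single membership relation forced by $(R:T)_lT=T$, then branch on whether $J(R)=J(T)\cap R$. Put $L:=(R:T)_l$ and recall from the definition of $(R:T)_l$ that $TL\subseteq R$. Since $LT=T$, fix a representation $1=\sum_{i=1}^n x_it_i$ with $x_i\in L$ and $t_i\in T$. Then for every $t\in T$ we have $t=\sum_{i=1}^n (tx_i)t_i$ with $tx_i\in TL\subseteq R$, so $T=\sum_{i=1}^n Rt_i$ and ${}_RT$ is finitely generated; likewise, for every $\ell\in L$ we have $\ell=\sum_{i=1}^n x_i(t_i\ell)$ with $t_i\ell\in TL\subseteq R$, so $L=\sum_{i=1}^n x_iR$ and $((R:T)_l)_R$ is finitely generated. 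As $R$ is left Artinian and ${}_RT$ is finitely generated, ${}_RT$ is an Artinian left $R$-module; every descending chain of left ideals of $T$ is a chain of left $R$-submodules and hence stabilises, so $T$ is left Artinian. Finally, $LT=T$ prevents $L$ from being a two-sided ideal of $T$ (otherwise $LT\subseteq L\subsetneq T$), so we are in Case $(1)$ of Remark~\ref{t32}; in particular $(R:T)_l\neq(R:T)_r$ and both are maximal ideals of $R$.

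For the ``moreover'' part assume $J(R)\neq J(T)\cap R$. By Lemma~\ref{t30}$(2)$ this forces $J(T)\subseteq J(R)\subseteq R$, whence $J(T)\subseteq (R:T)=:Q$. By the contrapositive of Lemma~\ref{t30}$(3)$ some left primitive ideal $Q_0$ of $T$ is contained in $R$; since $T$ is left Artinian, $Q_0$ is maximal (Remark~\ref{t27}), and $Q_0\subseteq R$ gives $Q_0\subseteq Q\subsetneq T$, so $Q_0=Q$. Thus $Q=(R:T)\in Max(T)$ and $J(T)\subseteq J(R)\cap (R:T)$, which is $(1)$. For $(2)$ I first note that $(R:T)_r$ is always a right ideal of $T$: if $aT\subseteq R$ and $s\in T$ then $(as)T=a(sT)\subseteq aT\subseteq R$. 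Hence $T(R:T)_r$ is a two-sided ideal of $T$ containing $Q=(R:T)\subseteq(R:T)_r$, so by maximality of $Q$ either $T(R:T)_r=Q$ or $T(R:T)_r=T$. The first possibility gives $(R:T)_r\subseteq T(R:T)_r=Q=(R:T)$, hence $(R:T)_r=(R:T)$, contradicting Case $(1)$; therefore $T(R:T)_r=T$. Writing $1=\sum_k w_ky_k$ with $w_k\in T$ and $y_k\in(R:T)_r$, the argument of the first paragraph (now using $y_kt\in(R:T)_rT\subseteq R$) yields $T=\sum_k w_kR$, so $T_R$ is finitely generated.

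For the first part of $(3)$, suppose $M\neq N$ are maximal ideals of $T$ with $M,N\neq Q$ and $M\cap R=N\cap R$. Neither lies in $R$ (a maximal ideal of $T$ inside $R$ would lie in, hence equal, $Q$), so Lemma~\ref{t31} applies and gives $M\cap R=(R:T)=Q$; but then $Q\subseteq M$, impossible for distinct maximal ideals, so in fact $M\cap R\neq N\cap R$. To describe $Max(R)$, take $P\in Max(R)$; then $R\setminus P$ is an $m$-system of $T$, so some prime---hence (as $T$ is Artinian) maximal---ideal $\widetilde P$ of $T$ satisfies $\widetilde P\cap R\subseteq P$. If $\widetilde P\subseteq R$ then $\widetilde P=Q$ and $P\supseteq Q$; otherwise $R/(\widetilde P\cap R)\cong T/\widetilde P$ is simple, so $\widetilde P\cap R$ is maximal and equals $P$, with $\widetilde P\neq Q$. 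Thus each maximal ideal of $R$ either contains $Q$ or equals $P'\cap R$ for some $P'\in Max(T)$ with $P'\neq Q$.

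It remains to identify the maximal ideals of $R$ containing $Q$, and this is the only genuinely delicate point. Recall from the preliminaries that $(R:T)_l(R:T)_r\subseteq(R:T)=Q$. In the left Artinian ring $R/Q$ the images $\bar L:=(R:T)_l/Q$ and $\bar R:=(R:T)_r/Q$ are distinct maximal ideals, and $\big((R:T)_l\cap(R:T)_r\big)/Q$ squares into $(R:T)_l(R:T)_r/Q=0$, hence is nilpotent and lies in $J(R/Q)$; as $J(R/Q)\subseteq\bar L\cap\bar R=\big((R:T)_l\cap(R:T)_r\big)/Q$, this forces $J(R/Q)=\bar L\cap\bar R$, and since $\bar L+\bar R=R/Q$ the ring $R/Q$ has, by the Chinese Remainder Theorem, exactly the two maximal ideals $\bar L,\bar R$. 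Therefore the maximal ideals of $R$ containing $Q$ are exactly $(R:T)_l$ and $(R:T)_r$. Combining with the previous paragraph, $Max(R)=\{(R:T)_l,(R:T)_r\}\cup\{P\cap R\mid P\in Max(T),\,P\neq Q\}$; the two conductor ideals contain $Q$ while each $P\cap R$ with $P\neq Q$ does not (as $Q\nsubseteq P$ for such $P$), and the first part of $(3)$ makes the latter contractions pairwise distinct, so $|Max(R)|=2+(|Max(T)|-1)=|Max(T)|+1$. The square-zero observation $\big((R:T)_l\cap(R:T)_r\big)^2\subseteq(R:T)_l(R:T)_r\subseteq Q$ is precisely what unlocks this final count.
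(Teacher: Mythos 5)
Your proof is correct and follows essentially the same route as the paper's: the same finite-generation computation from $1=\sum x_it_i$, the same appeal to Lemma \ref{t30}(3) (via Remark \ref{t27}) and Remark \ref{t32} for the dichotomy, the same maximality argument for $T(R:T)_r=T$, and the same use of Lemma \ref{t31} and the inclusion $(R:T)_l(R:T)_r\subseteq(R:T)$ to describe $Max(R)$. The only cosmetic difference is at the last step: the paper identifies the maximal ideals of $R$ containing $Q$ directly from primeness (if $Q\subseteq P$ then $(R:T)_l(R:T)_r\subseteq P$ forces $P=(R:T)_l$ or $P=(R:T)_r$), whereas you reach the same conclusion by a slightly longer detour through $J(R/Q)$ and the Chinese Remainder Theorem.
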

\begin{proof}
Since $(R:T)_lT=T$, there exist $a_1,\ldots, a_n\in (R:T)_l$ and $t_1,\ldots, t_n\in T$ such that $a_1t_1+\cdots+a_nt_n=1$. Therefore for each $x\in T$ and $p\in(R:T)_l$, we have $x=xa_1t_1+\cdots+xa_nt_n$ and $a_1t_1p+\cdots+a_nt_np=p$. Since $p, a_i\in (R:T)_l$, we infer that $xa_i, t_ip\in R$. Therefore $T=Rt_1+\cdots+Rt_n$ and $(R:T)_l=a_1R+\cdots+a_nR$. Hence ${}_RT$ and $((R:T)_l)_R$ are finitely generated. It is clear that $T$ is a left Artinian $R$-module and therefore is a left Artinian ring. Now by $(3)$ of Lemma \ref{t30}, we have two cases: $(i)$ if $R$ does not contain any maximal ideal of $T$ (note $T$ is a left Artinian ring), then $J(R)=J(T)\cap R$. $(ii)$ hence assume that $R$ contains a maximal ideal $Q$ of $T$, which clearly is unique. Therefore $Q=(R:T)$ and thus $J(T)\subseteq (R:T)$ is contained in $R$. Hence $J(T)\subseteq J(R)$, by Remark \ref{t29}. Thus $(1)$ holds. For $(2)$, since $(R:T)_l$ is not an ideal of $T$, we deduce that $(R:T)_r$ is not an ideal of $T$, by Remark \ref{t32}. Hence $Q\subsetneq (R:T)_r\subsetneq T(R:T)_r$ and therefore by maximality of $Q$ we conclude that $T(R:T)_r=T$. Similar to the proof of ${}_RT$ is finitely generated, we deduce that $T_R$ is finitely generated. Finally for $(3)$, assume that $M$ and $N$ are distinct maximal ideals of $T$ and $M,N\neq Q$. If $M\cap R=N\cap R$, then by the first part of $(1)$ of Lemma \ref{t31}, we deduce that $Q=(R:T)=M\cap N$, which is absurd, by the second part of $(1)$ of Lemma \ref{t31}. Thus $M\cap R\neq N\cap R$. Also note that, since $M\neq Q$, we have $M\nsubseteq R$ and therefore $R+M=T$, for $R$ is a maximal subring of $T$. Hence $R/(R\cap M)\cong T/M$ as rings and therefore $R\cap M$ is a maximal ideal of $R$. Also note that since $(R:T)_lT=T$ and $T(R:T)_r=T$, we immediately conclude that no maximal ideals of $T$ contains $(R:T)_l$ or $(R:T)_r$. Hence, if $M_1=Q, M_2,\ldots, M_k$ denote all maximal ideals of $T$ (note, $T$ is a left Artinian ring and therefore $Max(T)$ is finite), then $(R:T)_l, (R:T)_r, M_2\cap R,\ldots, M_k\cap R$ are maximal ideals of $R$. Conversely, if $P$ is a maximal ideal of $R$, then there exists a maximal ideal $Q'$ of $T$ such that $Q'\cap R\subseteq R$ (note $R\setminus P$ is a $m$-system in $R$ and therefore in $T$ and note that $T$ is a left Artinian ring). Now we have two cases: $(a)$ $Q'=Q$ and thus $Q\subseteq P$. Since $(R:T)_l(R:T)_r\subseteq Q\subseteq P$, we deduce that $P=(R:T)_l$ or $P=(R:T)_r$, for $P$ is a prime ideal and $R$ is a left Artinian ring. $(b)$ If $Q'\neq Q$, then as we see earlier $Q'\cap R$ is a maximal ideal of $R$ and therefore $P=Q'\cap R$. Hence $Max(R)=\{(R:T)_l, (R:T)_r\}\cup \{M_2\cap R,\ldots, M_k\cap R\}$ and therefore $|Max(R)|=|Max(T)|+1$.
\end{proof}

In the next result we assume that $(R:T)_l$ is not an ideal of $T$ and $(R:T)_lT$ is a proper ideal of $T$, where $R$ is an Artinian maximal subring of a ring $T$.

\begin{thm}\label{t34}
Let $R$ be a maximal subring of a ring $T$ and $R$ be a left Artinian ring. Assume that $(R:T)_l$ is not an ideal of $T$ and $(R:T)_lT$ is a proper ideal in $T$. Then the following hold:
\begin{enumerate}
\item $(R:T)_lT\in Max(T)$.
\item $R$ contains no maximal ideal of $T$; the contraction of distinct maximal ideals of $T$ are distinct maximal ideals of $R$, in particular $|Max(T)|\leq |Max(R)|$.
\item If $I:=l.ann_T((R:T)_l)$, then either $(R:T)_lT$ is a minimal prime ideal of $T$ and $J(R)=J(T)\cap R$ or $I^2=0$, $I\subseteq Nil_*(T)$ and $(R:T)_lT=r.ann_T(I)$, whenever $I\neq 0$.
\item Either $J(R)=J(T)\cap R$ or $(R:T)_l$ is not a finitely generated right ideal of $R$.
\item Either $T(R:T)_r\in Max(T)$ or there exists a natural number $n$ such that $T$ embeds in $((R:T)_r)^n$ as a right $T$-module and therefore in $R^n$ as a right $R$-module.
\end{enumerate}
\end{thm}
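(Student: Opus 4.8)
Throughout I write $P_l=(R:T)_l$ and $P_r=(R:T)_r$. By Remark~\ref{t32}, since $P_l$ is not an ideal of $T$ these are distinct maximal ideals of $R$ and $(R:T)\subsetneq P_l$; moreover $P_l$ is a left ideal of $T$ (if $Tx\subseteq R$ then $T(tx)\subseteq Tx\subseteq R$), so $P_lT$ is a two-sided ideal of $T$ and $R+P_lT$ is a subring containing $R$. For $(1)$ the plan is to invoke maximality of $R$: either $R+P_lT=R$, which forces $P_lT\subseteq R$ and hence $P_l\subseteq P_r$ (as $xT\subseteq P_lT\subseteq R$ for $x\in P_l$), impossible for distinct maximal ideals of $R$; or $R+P_lT=T$. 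In the latter case $P_l\subseteq R\cap P_lT$, and maximality of $P_l$ in $R$ together with $P_lT\neq T$ yields $R\cap P_lT=P_l$, so $T/P_lT\cong R/P_l$ is simple and $P_lT\in Max(T)$. For $(2)$, if some $M\in Max(T)$ satisfied $M\subseteq R$ then $M\subseteq(R:T)$ and maximality would force $M=(R:T)$; but then $(R:T)\subseteq P_lT\subsetneq T$ forces $(R:T)=P_lT\supseteq P_l$, contradicting $(R:T)\subsetneq P_l$. Hence no maximal ideal of $T$ lies in $R$, so each $M\in Max(T)$ gives $R+M=T$ and $R\cap M\in Max(R)$; and if distinct $M,N$ had the same contraction, Lemma~\ref{t31} would identify it with $(R:T)\in Max(R)$, again contradicting $(R:T)\subsetneq P_l\subsetneq R$. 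Thus contraction embeds $Max(T)$ into $Max(R)$ and $|Max(T)|\le|Max(R)|$.

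For $(3)$ the governing dichotomy is whether the maximal ideal $P_lT$ is a minimal prime of $T$. By the contrapositive of Lemma~\ref{t30}$(3)$, if $J(R)\neq J(T)\cap R$ then some left primitive (hence prime) ideal $Q$ of $T$ satisfies $Q\subseteq R$, so $Q\subseteq(R:T)\subsetneq P_l\subseteq P_lT$ and $P_lT$ is not minimal; equivalently, if $P_lT\in Min(T)$ then $J(R)=J(T)\cap R$, which is the first alternative. Otherwise choose a prime $Q\subsetneq P_lT$ and set $I=l.ann_T(P_l)$, a two-sided ideal of $T$ with $IP_l=0$, hence $I\cdot P_lT=0\subseteq Q$. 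Primeness of $Q$ and $P_lT\not\subseteq Q$ give $I\subseteq Q\subsetneq P_lT$, so $I^2\subseteq I\cdot P_lT=0$ and $I\subseteq Nil_*(T)$. Finally $I\cdot P_lT=0$ yields $P_lT\subseteq r.ann_T(I)$, and when $I\neq0$ this right annihilator is proper, so maximality of $P_lT$ forces $P_lT=r.ann_T(I)$.

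For $(5)$ the plan is to treat $P_r$ symmetrically: $P_r$ is a right ideal of $T$, $TP_r$ is a two-sided ideal, and the subring $R+TP_r$ together with the argument of $(1)$ shows that if $TP_r\subsetneq T$ then $TP_r\in Max(T)$ (here $R+TP_r=R$ would give $P_r\subseteq P_l$, impossible). If instead $TP_r=T$, write $1=\sum_{i=1}^n t_ia_i$ with $t_i\in T$ and $a_i\in P_r$; the map $(x_1,\ldots,x_n)\mapsto\sum_i t_ix_i$ is a right $T$-module surjection $(P_r)^n\twoheadrightarrow T$, which splits since $T_T$ is free. Hence $T$ is a direct summand of $(P_r)^n$, and as $P_r\subseteq R$ this embeds $T$ into $R^n$ as right $R$-modules.

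For $(4)$ I would prove the contrapositive. Assuming $J(R)\neq J(T)\cap R$, the analysis of $(3)$ places us in its second case and, by Lemma~\ref{t30}$(3)$, provides a left primitive ideal $Q\subseteq(R:T)$ with a faithful simple left $T/Q$-module $V$. Since $Q\subseteq(R:T)\subsetneq P_l$ we have $P_l\not\subseteq Q=l.ann_T(V)$, whence $P_lV=V$; thus if $P_l=\sum_{i=1}^n a_iR$ were finitely generated as a right $R$-ideal, then $V=\sum_i a_iV$. \textbf{The crux of the matter}, and the step I expect to be the main obstacle, is to derive a contradiction from this relation: because $R$ is only assumed left Artinian, $P_l$ need not have finite length as a right $R$-module, so a naive Nakayama argument is unavailable. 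The proof must instead exploit the precise interaction between the right finite generation of $P_l$, the left $T$-action on $V$, and the structural information about $l.ann_T(P_l)$ and $P_lT$ established in $(3)$, to conclude that finite generation of $P_l$ is incompatible with the existence of such a $Q$.
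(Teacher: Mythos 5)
Parts $(1)$, $(2)$, $(3)$ and $(5)$ of your proposal are correct and run essentially parallel to the paper's proof; indeed your contradiction in $(2)$ (Lemma \ref{t31} would identify the common contraction with $(R:T)$, which cannot be maximal in $R$ because $(R:T)\subsetneq (R:T)_l\subsetneq R$) and your splitting argument in $(5)$ are cleaner than what the paper writes, since the paper only sketches $(5)$ by reference to Theorem \ref{t33}. The genuine gap is part $(4)$, where you explicitly stop: the route through a faithful simple $T/Q$-module $V$ and the relation $V=\sum_i a_iV$ is the wrong handle, and no refinement of a Nakayama argument on $V$ is what is needed.

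The missing idea is to apply the finite generation of $(R:T)_l=\sum_{i=1}^n a_iR$ to $T$ itself rather than to $V$. Consider the left $T$-module map $T\to ((R:T)_l)^n$ given by $t\mapsto (ta_1,\dots,ta_n)$; its kernel is $\bigcap_i l.ann_T(a_i)=l.ann_T((R:T)_l)=:I$. Assuming $J(R)\neq J(T)\cap R$, Lemma \ref{t30}(3) yields a left primitive ideal $Q$ of $T$ with $Q\subseteq (R:T)$; since $I\cdot (R:T)_lT=0\subseteq Q$ and $(R:T)_lT\nsubseteq Q$ (else $(R:T)_lT\subseteq (R:T)\subsetneq (R:T)_l$), primeness of $Q$ gives $I\subseteq Q\subseteq R$, so $I$ is a left ideal of $R$ and hence a left Artinian $R$-module. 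On the other hand $T/I$ embeds in $((R:T)_l)^n\subseteq R^n$ as left $R$-modules, so it is also left Artinian over $R$. Hence ${}_RT$, and therefore $T$, is a left Artinian ring; but then the prime ideal $Q$ would be maximal in $T$, contradicting $Q\subseteq (R:T)\subsetneq (R:T)_lT\subsetneq T$. This is exactly the left-sided dual of the embedding trick you yourself used in part $(5)$; the one extra ingredient, which your setup never exploits, is that the kernel $I$ lands inside $R$ (via $Q$), which is what allows the Artinian property to transfer from $R$ to $T$ and produce the contradiction.
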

\begin{proof}
First note that by the assumptions $(R:T)\subsetneq (R:T)_l\subsetneq (R:T)_lT\subsetneq T$. Hence $R+(R:T)_lT=T$, by the maximality of $R$. Clearly $(R:T)_lT\cap R=(R:T)_l$ which is a maximal ideal of $R$, for $R$ is a left Artinian ring and $(R:T)_l$ is a prime ideal of $R$, by Lemma \ref{pt3}. Hence $R/(R:T)_l=R/((R:T)_lT\cap R)\cong T/(R:T)_lT$, as rings and thus $(R:T)_lT$ is a maximal ideal of $T$. Thus $(1)$ holds. For $(2)$, if $K$ is an ideal of $T$ which is contained in $R$, then $K\subseteq (R:T)\subsetneq (R:T)_lT$ and therefore $K$ is not a maximal ideal of $T$. Hence if $M$ is a maximal ideal of $T$, then $R+M=T$ by maximality of $R$, and therefore $R/(R\cap M)\cong T/M$ as rings, thus $R\cap M$ is a maximal ideal of $R$. Now assume that $M$ and $N$ are maximal ideals of $T$ with $R\cap N=R\cap M$. Thus by $(1)$ of Lemma \ref{t31}, we deduce that $M\cap N=(R:T)\subseteq (R:T)_lT$. Hence by $(1)$, we conclude that $M=(R:T)_lT$ or $N=(R:T)_lT$. In any cases, since $M\cap R=N\cap R$, we deduce that $(R:T)_l\subseteq M, N$ and thus $M=N=(R:T)_lT$. Therefore $(2)$ holds. For $(3)$, first suppose that $(R:T)_lT$ is a minimal prime ideal of $T$, then by $(3)$ of Lemma \ref{t30}, if $Q$ is a left primitive ideal of $T$ and $Q\subseteq R$, then $Q\subseteq (R:T)\subsetneq (R:T)_l\subsetneq (R:T)_lT$, which is absurd for $(R:T)_lT$ is a minimal prime ideal of $T$. Thus $R$ does not contain any left primitive ideal of $T$ and therefore $J(R)=J(T)\cap R$ by $(3)$ of Lemma \ref{t30}. Now assume that $(R:T)_lT$ is not a minimal prime ideal of $T$. For our claim we may suppose that $I\neq 0$. Since $(R:T)_lT$ is not a minimal prime ideal of $R$, we conclude that there exists a prime ideal $Q$ of $T$ which is properly contained in $P:=(R:T)_lT$. By $I(R:T)_l=0\subseteq Q$, we deduce that $I\subseteq Q\subsetneq P$. In particular, $I^2=0$ and therefore $I\subseteq Nil_*(T)$. Now note that since $I=l.ann_T((R:T)_lT)$ and $(R:T)_lT$ is a maximal ideal of $T$, we obtain that $(R:T)_lT=r.ann_T(I)$. Hence $(3)$ holds. For $(4)$, assume that $J(R)\neq J(T)\cap R$, then by $(3)$ of Lemma \ref{t30}, let $Q$ be a left primitive ideal of $T$ which is contained in $R$. Hence $Q\subseteq (R:T)\subsetneq (R:T)_l\subsetneq (R:T)_lT$. This immediately implies that $l.ann_T((R:T)_l)\subseteq Q$ and therefore $l.ann_T((R:T)_l)\subseteq R$ is a left Artinian $R$-module. Now suppose that $(R:T)_l$ is a finitely generated as a right ideal of $R$. Hence $(R:T)_l=a_1R+\cdots+a_nR$, for some $a_1,\ldots,a_n\in (R:T)_l$. Therefore $l.ann_T((R:T)_l)=l.ann_T(a_1)\cap\cdots\cap l.ann_T(a_n)$, which immediately implies that $T/l.ann_T((R:T)_l)$ can be embedded in $T/l.ann_T(a_1)\times\cdots\times T/l.ann_T(a_n)$, as left $T$-modules. Since $T/l.ann_T(a_i)\cong Ta_i$ as left $T$-modules and $a_i\in (R:T)_l$, we conclude that $T/l.ann_T((R:T)_l)$ embeds in $((R:T)_l)^n$ as left $T$-module and therefore as left $R$-module too. This immediately implies that $T/l.ann_T((R:T)_l)$ is a left Artinian $R$-module and therefore $T$ is a left Artinian ring. Hence $Q$ is a maximal ideal of $T$, which is absurd. Thus $(R:T)_l$ is not a finitely generated right ideal of $T$. For $(5)$, note that either $T(R:T)_r$ is a proper ideal of $T$ or $T(R:T)_r=T$. First assume that $T(R:T)_r$ is a proper ideal of $T$. Then clearly, $T(R:T)_r\cap R=(R:T)_r$, for $(R:T)_r$ is a maximal ideal of $R$, for $R$ is a left Artinian ring and $(R:T)_r$ is a prime ideal of $R$ by Lemma \ref{pt3}. By Remark \ref{t32}, we deduce that $T(R:T)_r$ is not contained in $R$, for $(R:T)_l$ and $(R:T)_r$ are not ideals of $T$. Hence $R+T(R:T)_r=T$, by maximality of $R$ and therefore $R/(R:T)_r\cong T/T(R:T)_r$ as rings, which immediately implies that $T(R:T)_r$ is a maximal ideal of $T$. Now assume that $T(R:T)_r=T$, in this case similar to the proof of $(4)$ and the proof of Theorem \ref{t33}, one can complete the proof.
\end{proof}

By $(4)$ of the previous theorem we have the following immediate corollary.

\begin{cor}
Let $R$ be a maximal subring of a ring $T$ and $R$ be an Artinian ring. Assume that $(R:T)_l$ is not an ideal of $T$ and $(R:T)_lT$ is a proper ideal in $T$. Then $J(R)=J(T)\cap R$
\end{cor}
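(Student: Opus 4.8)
The plan is to invoke part $(4)$ of the preceding Theorem \ref{t34} directly, exploiting the extra strength of the Artinian hypothesis. Since $R$ is an Artinian ring it is in particular left Artinian, so all the hypotheses of Theorem \ref{t34} (maximal subring, $R$ left Artinian, $(R:T)_l$ not an ideal of $T$, and $(R:T)_lT$ a proper ideal of $T$) are met, and its conclusions are available to us. The crucial additional observation is that a right Artinian ring with identity is also right Noetherian by the Hopkins--Levitzki theorem; hence every right ideal of $R$ is finitely generated.

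In particular, the conductor ideal $(R:T)_l$, which is a two-sided ideal of $R$ (recall $(R:T)_l=r.ann_R((T/R)_R)$, as noted in the preliminaries) and so a fortiori a right ideal of $R$, is finitely generated as a right ideal of $R$. Now part $(4)$ of Theorem \ref{t34} asserts the dichotomy that either $J(R)=J(T)\cap R$ or $(R:T)_l$ is not a finitely generated right ideal of $R$. The second alternative is excluded by the preceding sentence, so we are forced into the first, namely $J(R)=J(T)\cap R$, which is exactly the claim.

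There is essentially no genuine obstacle here, and this is why the result is labelled an immediate corollary; the only point worth flagging is that the hypothesis ``$R$ is an Artinian ring'' must be read as (at least) right Artinian rather than merely left Artinian, since it is precisely the right Noetherianity furnished by Hopkins--Levitzki that guarantees $(R:T)_l$ is finitely generated as a right ideal and thereby kills the bad branch of the dichotomy. Were one to assume only that $R$ is left Artinian, part $(4)$ alone would not close the argument, because a left Artinian ring need not be right Noetherian and so $(R:T)_l$ could in principle fail to be finitely generated on the right.
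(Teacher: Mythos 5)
Your proof is correct and is exactly the argument the paper intends: the corollary is stated as an immediate consequence of part $(4)$ of Theorem \ref{t34}, and your observation that the (two-sided) Artinian hypothesis makes $R$ right Noetherian via Hopkins--Levitzki, so that $(R:T)_l$ is finitely generated as a right ideal and the second branch of the dichotomy is excluded, is precisely the step the paper leaves implicit. Your remark that ``Artinian'' must be read as at least right Artinian (and not merely left Artinian) correctly explains why the corollary strengthens the hypothesis of Theorem \ref{t34}.
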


Finally we consider case $(2)$ of Remark \ref{t32}, in the next two results.

\begin{prop}\label{t35}
Let $R$ be a maximal subring of a ring $T$. Assume that $(R:T)_l=(R:T)=(R:T)_r$, $R$ is a left Artinian ring and $T$ is a zero-dimensional ring. Then $J(R)=J(T)\cap R$.
\end{prop}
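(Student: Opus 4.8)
The plan is to split into two cases according to whether $R$ contains a left primitive (equivalently, a prime) ideal of $T$, handling the easy case by Lemma \ref{t30}$(3)$ and the remaining case by computing $J(R)$ and $J(T)$ as intersections of maximal two-sided ideals. First I would record how the hypothesis $dim(T)=0$ is used: every prime ideal of $T$ is maximal, since a prime $P$ lies in a maximal (hence prime) ideal $M$, and a proper containment $P\subsetneq M$ would produce a chain of primes of length one. Because left primitive ideals are prime while simple rings with identity are left primitive, it follows that in $T$ the left primitive ideals are exactly the maximal two-sided ideals, so $J(T)=\bigcap_{N\in Max(T)}N$. On the other side, $R$ being left Artinian gives $J(R)=\bigcap_{M\in Max(R)}M$ (Remark \ref{t27} applied to $R$) and makes every prime ideal of $R$ maximal; in particular $(R:T)=(R:T)_l$ is maximal in $R$, being prime by Theorem \ref{pt3}.

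Now for the dichotomy. If no left primitive ideal of $T$ is contained in $R$, then Lemma \ref{t30}$(3)$ gives $J(R)=J(T)\cap R$ at once. Otherwise some left primitive ideal $Q\subseteq R$; then $Q$ is prime, hence maximal in $T$ by the previous paragraph, and $Q\subseteq R$ forces $Q\subseteq (R:T)\subsetneq T$, so by maximality $Q=(R:T)$ and thus $(R:T)\in Max(T)$. In this branch I would carry out the contraction bookkeeping already used for Theorem \ref{t28}: via the $m$-system argument together with Lemma \ref{t30}$(1)$, each maximal ideal of $R$ is either $(R:T)$ or a contraction $N\cap R$ with $N\in Max(T)$ and $N\nsubseteq R$; moreover, for $N\in Max(T)$ one has $N\subseteq R$ exactly when $N=(R:T)$, since $N\subseteq R$ implies $N\subseteq (R:T)\subsetneq T$.

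Setting $X:=\bigcap_{N\in Max(T),\,N\nsubseteq R}N$, these descriptions yield
\[
J(T)=(R:T)\cap X,\qquad J(R)=(R:T)\cap(X\cap R)=(R:T)\cap X,
\]
so that $J(R)=J(T)$. As $J(T)=(R:T)\cap X\subseteq (R:T)\subseteq R$, this equals $J(T)\cap R$, and therefore $J(R)=J(T)\cap R$ in this branch as well, finishing the argument.

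The step I expect to be the crux is the identification, inside the zero-dimensional ring $T$, of the left primitive ideals with the maximal two-sided ideals: this is precisely what lets me express $J(T)$ as an intersection over $Max(T)$ and match it term by term against $J(R)=\bigcap_{M\in Max(R)}M$. The secondary care point is verifying that contraction exhausts $Max(R)$ and that, for maximal $N$, the inclusion $N\subseteq R$ forces $N=(R:T)$; both follow the pattern already established for Theorem \ref{t28} and Lemma \ref{t30}$(1)$.
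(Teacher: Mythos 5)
Your proposal is correct and follows essentially the same route as the paper: the dichotomy via Lemma \ref{t30}$(3)$, the identification $Q=(R:T)\in Max(T)$ in the second case, and the comparison of $J(R)$ and $J(T)$ through contractions of maximal (= left primitive) ideals of the zero-dimensional ring $T$. The only cosmetic difference is that the paper avoids the full determination of $Max(R)$: it just shows $J(R)\subseteq P$ for every left primitive ideal $P$ of $T$ (using that $Q$ and each $P\cap R$ are maximal in $R$) and gets the reverse inclusion from $J(T)\subseteq R$ together with Remark \ref{t29}.
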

\begin{proof}
First note that by Remark \ref{t29}, $J(T)\cap R\subseteq J(R)$. By $(3)$ of Lemma \ref{t30}, we have two cases: $(i)$ if each left primitive ideal of $T$ is not contained in $R$, then $J(R)=J(T)\cap R$ and hence we are done. $(ii)$ there exists a left primitive ideal $Q$ of $T$ such that $Q\subseteq R$. Hence by our assumption $Q$ is a maximal ideal of $T$ and therefore $Q=(R:T)$. Therefore in this case we infer that $J(T)\subseteq J(R)\subseteq Q$ (note, $Q=(R:T)_l=(R:T)=(R:T)_r$ is a prime ideal of $R$, by Lemma \ref{pt3}, and $R$ is a left Artinian ring). Now let $P$ be a left primitive ideal of $T$ and $P\neq Q$. Therefore $P\nsubseteq R$ and thus $R+P=T$, by maximality of $R$. Thus $R/(R\cap P)\cong T/P$ as rings, and therefore $P\cap R$ is a maximal ideal of $R$ (note that $T$ is zero-dimensional and hence $P$ is maximal). Thus $J(R)\subseteq P\cap R$. Hence for each left primitive ideal $P$ of $R$ we have $J(R)\subseteq P$ and therefore $J(R)\subseteq J(T)$. Thus $J(R)=J(T)$.
\end{proof}

We conclude this paper by the following main result.

\begin{thm}\label{t36}
Let $R$ be a maximal subring of a ring $T$ and $R$ be a left Artinian ring. Assume that $(R:T)_l=(R:T)=(R:T)_r$. Then $T/(R:T)\cong \mathbb{M}_n(S)$, for some ring $S$ and a natural number $n$, and $S$ has a maximal subring which is a division ring. Moreover, either $T$ is a left Artinian ring (in particular, $J(R)=J(T)\cap R$) or $S$ (and also $T/(R:T)$) has at most one nonzero proper ideal, and one of the following holds:
\begin{enumerate}
\item $S$ is a simple ring, i.e., $(R:T)$ is a maximal ideal of $T$.
\item $S$ has a unique nonzero proper ideal $M$ such that $M^2=0$. In particular, $J(S)=M$.
\item $S$ has a unique nonzero proper ideal $M$ such that $M^2=M$. In particular, either $S$ is a primitive ring or $J(S)=M$.
\end{enumerate}
Moreover, if $Q$ is the unique maximal ideal of $T$ which contains $(R:T)$ in items $(1)-(3)$, then the following hold:
\begin{itemize}
\item[$(a)$] $Max(T)=\{Q\}\cup \{N\in Spec(T)\ |\ N\nsubseteq R\}$.
\item[$(b)$] each prime ideal of $T$ is either maximal or is contained in $Q$. In particular, $dim(T)=ht(Q)$.
\item[$(c)$] if $N_1$ and $N_2$ are maximal ideals of $T$ and $N_i\neq Q$, then $N_1\cap R\neq N_2\cap R$.
\item[$(d)$] $|Max(T)|\leq |Max(R)|$.
\end{itemize}
\end{thm}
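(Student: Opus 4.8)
The plan is to factor out the common conductor ideal, recognise the resulting bottom ring as a simple Artinian ring, and then transport everything through the matrix-unit decomposition it forces; the structural dichotomy and items $(a)$--$(d)$ then follow from Theorem~\ref{pt2} and the prime/conductor lemmas already established.

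First I would set $\mathfrak{c}:=(R:T)$. By Theorem~\ref{pt3} the ideal $(R:T)_l=(R:T)$ is prime in $R$, and since $R$ is left Artinian it is a maximal ideal of $R$; by hypothesis $\mathfrak{c}=(R:T)_l=(R:T)_r$ is at the same time an ideal of $T$. Passing to $\bar R:=R/\mathfrak{c}$ and $\bar T:=T/\mathfrak{c}$, the intermediate subrings of $\bar R\subseteq\bar T$ correspond to those of $R\subseteq T$ containing $\mathfrak{c}$ (all of which contain $\mathfrak{c}\subseteq R$ automatically), so $\bar R$ is a maximal subring of $\bar T$; moreover $\bar R$ is simple left Artinian, whence $\bar R\cong\mathbb{M}_n(D)$ for a division ring $D$ by Artin--Wedderburn. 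In particular $\bar R$, and hence $\bar T$, contains a complete system of $n\times n$ matrix units $\{e_{ij}\}$ with $\sum_i e_{ii}=1$.

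Next I would invoke the matrix-unit structure theorem: because $\bar T$ carries such a complete system of matrix units, $\bar T\cong\mathbb{M}_n(S)$ with $S:=e_{11}\bar Te_{11}$, and under this isomorphism $\bar R$ (a subring containing every $e_{ij}$) corresponds to $\mathbb{M}_n(S_0)$, where $S_0:=e_{11}\bar Re_{11}\cong e_{11}\mathbb{M}_n(D)e_{11}\cong D$. Subrings of $\mathbb{M}_n(S)$ containing all matrix units correspond bijectively, via $A\mapsto e_{11}Ae_{11}$ and $B_0\mapsto\mathbb{M}_n(B_0)$, to subrings of $S$, and this correspondence preserves inclusions; hence the maximality of $\bar R=\mathbb{M}_n(S_0)$ in $\bar T=\mathbb{M}_n(S)$ is equivalent to the maximality of $S_0\cong D$ in $S$. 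This already yields $T/(R:T)\cong\mathbb{M}_n(S)$ together with a division-ring maximal subring of $S$. I expect this descent to the $(1,1)$-corner---matching $\bar R$ with $\mathbb{M}_n(D)$ and transferring maximality to $D\subseteq S$---to be the main technical obstacle; the remaining steps are then mechanical.

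Now I would apply Theorem~\ref{pt2} to the simple maximal subring $S_0\subseteq S$, which splits into its four cases. The key auxiliary point is that $\mathfrak{c}$ is a left Artinian left $T$-module (its left $T$-submodules are left $R$-submodules and $R$ is left Artinian), so from $0\to\mathfrak{c}\to T\to\bar T\to0$ the ring $T$ is left Artinian exactly when $\bar T\cong\mathbb{M}_n(S)$ is, i.e.\ exactly when $S$ is. In case $(2)$ of Theorem~\ref{pt2} we have $S\cong D\times D$, which is Artinian, so $T$ is left Artinian; then $T$ is zero-dimensional by Remark~\ref{t27} and Proposition~\ref{t35} gives $J(R)=J(T)\cap R$. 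In cases $(1)$, $(3)$, $(4)$ of Theorem~\ref{pt2} the ring $S$ has at most one nonzero proper ideal, and these read off as cases $(1)$ (here $S$ simple, equivalently $\mathfrak{c}$ maximal in $T$, since $\mathbb{M}_n(-)$ is an ideal bijection), $(3)$ (with $M^2=M$, so $S$ primitive or $J(S)=M$), and $(2)$ (with $M^2=0$, whence $M\subseteq J(S)$ and $S/M\cong D$ forces $J(S)=M$) of the present theorem. Finally, for $(a)$--$(d)$ I would use that ideals of $T$ containing $\mathfrak{c}$ correspond to ideals of $S$ through $\mathbb{M}_n(-)$, so in cases $(1)$--$(3)$ there is a unique maximal ideal $Q$ of $T$ containing $\mathfrak{c}$, together with Lemma~\ref{t30}$(1)$ (every prime of $T$ lies in $R$, hence in $\mathfrak{c}\subseteq Q$, or is maximal) and Lemma~\ref{t31}$(1)$. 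For $(a)$, a maximal ideal inside $R$ must equal $\mathfrak{c}$ and hence $Q$, while a maximal ideal not containing $\mathfrak{c}$ cannot lie in $R$, giving $Max(T)=\{Q\}\cup\{N\in Spec(T)\mid N\nsubseteq R\}$. For $(b)$, primes in $R$ lie in $\mathfrak{c}\subseteq Q$ and all others are maximal, and any chain ending at a maximal $N\neq Q$ has all lower terms inside $\mathfrak{c}\subseteq Q$, so it reroutes through $Q$ without shortening, giving $dim(T)=ht(Q)$. For $(c)$, if distinct $N_1,N_2\neq Q$ had $N_1\cap R=N_2\cap R$, Lemma~\ref{t31}$(1)$ forces $N_1\cap N_2=\mathfrak{c}$, so both contain $\mathfrak{c}$ and both equal $Q$, a contradiction. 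For $(d)$, the map $N\mapsto N\cap R$ on $Max(T)\setminus\{Q\}$ lands in $Max(R)$ (each such $N\nsubseteq R$, so $R/(N\cap R)\cong T/N$ is simple), is injective by $(c)$, and misses $\mathfrak{c}\in Max(R)$ (else $N\supseteq\mathfrak{c}$ gives $N=Q$); sending $Q\mapsto\mathfrak{c}$ extends it to an injection $Max(T)\hookrightarrow Max(R)$.
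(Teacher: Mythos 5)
Your proposal is correct and follows essentially the same route as the paper: reduce modulo the conductor $(R:T)$ (a maximal ideal of $R$), identify $R/(R:T)\cong\mathbb{M}_n(D)$ and $T/(R:T)\cong\mathbb{M}_n(S)$ with $D$ maximal in $S$, run the case analysis of Theorem~\ref{pt2}, and derive $(a)$--$(d)$ from Lemmas~\ref{t30} and~\ref{t31}. The only differences are cosmetic: you spell out the matrix-unit/corner-ring correspondence that the paper merely asserts, and you obtain left Artinianness of $T$ in the $S\cong D\times D$ case from the extension $0\to(R:T)\to T\to T/(R:T)\to 0$ rather than from Lemma~\ref{t31}(4); both are valid.
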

\begin{proof}
Let $P:=(R:T)$, then by our assumption we deduce that $P$ is a maximal ideal of $R$ and therefore $R/P$ is of the form $\mathbb{M}_n(D)$ for some natural number $n$ and a division ring $D$. Now since $R/P$ is a maximal subring of $T/P$, we immediately conclude that $T/P$ is of the form $\mathbb{M}_n(S)$ for a ring $S$, where $D$ is a maximal subring of $S$. Since $R/P$ is a simple ring, by Theorem \ref{pt2} we have two cases: $(i)$ $T/P$ has exactly two nonzero proper (maximal) ideals $M/P$ and $N/P$ such that $(M/P)\cap (N/P)=0$, i.e., $M\cap N=P$ and therefore by $(3)$ of Lemma \ref{t31}, we deduce that $T$ is a left Artinian ring (and hence $J(R)=J(T)\cap R$, by the previous proposition). $(ii)$ one of the items $(1), (3), (4)$ of Theorem \ref{pt2} holds. Now one can easily see that in case $(1)$ of Theorem \ref{pt2}, we obtain $(1)$, in case $(4)$ of Theorem \ref{pt2}, we deduce $(2)$, and in case $(3)$ of Theorem \ref{pt2}, we have $(3)$. Now assume that $Q$ is the unique maximal ideal of $T$ which contains $(R:T)$, (i.e., in case $(1)$, $Q=(R:T)$ and otherwise $Q=\mathbb{M}_n(M)$ in cases $(2)$ and $(3)$). $(a)$ if $P$ is a prime ideal of $T$ which is not contained in $R$, then by maximality of $R$ we conclude that $R+P=T$ and therefore $R/(R\cap P)\cong T/P$ as rings. Thus $R\cap P$ is a prime ideal of $R$, and hence is a maximal ideal of $R$, for $R$ is a left Artinian ring. Hence $P$ is a maximal ideal of $T$. Conversely, if $N$ is a maximal ideal of $T$ and $N\neq Q$, then clearly, $N\nsubseteq R$, for otherwise we conclude that $N\subseteq (R:T)\subseteq Q$ which is absurd. Thus $(a)$ holds. For $(b)$, assume that $P$ is a prime ideal of $T$, if $P$ is not contained in $R$, as we see in $(a)$, $P$ is a maximal ideal of $T$, otherwise $P\subseteq R$ and therefore $P\subseteq (R:T)\subseteq Q$. $(3)$ is evident by our assumption in these cases and by $(1)$ of Lemma \ref{t31}. $(d)$ is a consequence of $(a)$, $(c)$, our assumption in these cases and by Lemma \ref{t31}, in fact the function $N\longmapsto N\cap R$ is a one-one function from $Max(T)$ into $Max(R)$.
\end{proof}

\centerline{\Large{\bf Acknowledgement}}
The author is grateful to the Research Council of Shahid Chamran University of Ahvaz (Ahvaz-Iran) for
financial support (Grant Number: SCU.MM1403.721)


\end{document}